\theoremstyle{plain}
\newtheorem{theorem}{Theorem}
\newtheorem{lemma}[theorem]{Lemma}
\newtheorem{corollary}[theorem]{Corollary}
\theoremstyle{definition}
\newtheorem{definition}[theorem]{Definition}
\theoremstyle{remark}
\newtheorem{remark}[theorem]{Remark}
\DeclareMathOperator{\Div }{div}
\def\pa{\partial}
\def\cal{\mathcal}
\let\mib=\boldsymbol
\def\R{{\mathbb R}}
\def\eps{\varepsilon}
\def\mx{{\bf x}}
\def\mz{{\bf z}}
\def\mxi{{\mib \xi}}
\def\my{{\bf y}}
\def\mz{{\bf z}}
\def\mff{{\mathfrak f}}
\newcommand{\tensorkl}{\mathcal{T}^k_l}
\newcommand{\tensoroo}{\mathcal{T}^1_1}
\newcommand{\supp}{\mathrm{supp}\,}
\newcommand{\mcf}{{\mathcal F}}
\begin{document}

\title[Well-posedness theory on Riemannian manifolds]{Well-posedness theory for degenerate parabolic equations on Riemannian manifolds}
\author{M.\ Graf}\address{Melanie Graf, University of Vienna, 
Faculty of Mathematics}\email{melanie.graf@univie.ac.at}
\author{M.\ Kunzinger}\address{Michael Kunzinger, University of Vienna, 
Faculty of Mathematics}\email{michael.kunzinger@univie.ac.at}
\author{D.\ Mitrovic}\address{Darko Mitrovic,
Faculty of Mathematics, University of
Montenegro }\email{darkom@ac.me}

\subjclass[2010]{35K65, 42B37, 76S99}

\keywords{degenerate parabolic equations, Cauchy problem on a Riemannian manifold, geometry compatible coefficients, kinetic formulation, well-posedness}

\begin{abstract}
We consider the degenerate parabolic equation
$$
\partial_t u +\mathrm{div} {\mathfrak f}_{\bf x}(u)=\mathrm{div}(\mathrm{div} ( A_{\bf x}(u) ) ), 
\ \ {\bf x} \in M, \ \ t\geq 0
$$ 
on a smooth, compact, $d$-dimensional Riemannian manifold $(M,g)$. Here, for each $u\in \R$, ${\bf x}\mapsto {\mathfrak f}_{\bf x}(u)$  is a vector field and ${\bf x} \mapsto A_{\bf x}(u)$ is a $(1,1)$-tensor field on $M$ such that $u\mapsto \langle A_{\bf x}(u) {\mib \xi},{\mib \xi} \rangle$, 
${\mib \xi} \in T_\mx M$,  is non-decreasing with respect to $u$. The fact that the notion of divergence appearing in the equation depends on the metric $g$ requires revisiting the standard entropy admissibility concept. We derive it under an additional geometry compatibility condition and, as a corollary, we introduce the kinetic formulation of the equation on the manifold. Using this concept, we prove well-posedness of the corresponding Cauchy problem. 
\end{abstract}
\maketitle

\section{Introduction}

We consider the Cauchy problem for a degenerate parabolic equation of the form
\begin{align}
\label{main-eq}
\pa_t u +\Div \mff_{\mx}(u)&=\Div(\Div ( A_{\mx}(u) ) ), \ \ \mx \in M, \ \ t\geq 0\\
\label{ic}
u|_{t=0} &=u_0(\mx) \in L^\infty(M)
\end{align} 
on a smooth (Hausdorff), compact, $d$-dimensional Riemannian manifold $(M,g)$. For simplicity, we shall assume that 
\begin{equation}
\label{id-as-1}
0\leq u_0 \leq 1.
\end{equation} 
This is a natural assumption since equation \eqref{main-eq}, among other phenomena, describes fluid concentration dynamics in the case of flow in porous media (Buckley-Leverett type equations), and concentration always varies between zero and one (see e.g. \cite{BV}).

We suppose that the map $(\mx,\xi)\mapsto \mff_{\mx}(\xi) \equiv \mff(\mx,\xi)$, 
$M\times \R \to TM$ is $C^1$ and that, for every $ \xi\in \R$,
$\mx \mapsto \mff_{\mx}(\xi)\in \mathfrak{X}(M)$ (the space of vector fields on $M$).
Also, $(\mx,\xi)\mapsto A_\mx(\xi): M\times\R \to T^1_1M$ is supposed to satisfy   $\mx\mapsto A_\mx(\xi)\in \mathcal{T}^1_1(M)$ for each $\xi\in \R$ 
and we assume that the  $\xi$-derivative of $A$ is positive semi-definite and
\begin{equation}\label{Apd}
A_\mx'(\xi)=\sigma_\mx(\xi)^\top \sigma_\mx(\xi),
\end{equation} 
with
$\sigma$ such that $(\mx,\xi)\mapsto \sigma_\mx(\xi): M\times\R \to T^1_1M$ is $C^2$ and 
$\mx\mapsto \sigma_\mx(\xi)\in \mathcal{T}^1_1(M)$ for each $\xi\in \R$.  Here $\sigma^\top\in \tensoroo(M)$ denotes the transpose of $\sigma\in \tensoroo(M)$, i.e.,
 the unique tensor field such that $\langle \sigma(X),Y\rangle = \langle X,\sigma^\top(Y)\rangle$
for any $X,\ Y\in \mathfrak{X}(M)$.
In particular, this implies that $\xi \mapsto \langle A_{\mx}(\xi) \mxi,\mxi \rangle$ 
is non-decreasing for any $\mxi\in T_\mx M$. 

In local coordinates, we write
\begin{align*}
\mff_{\mx}(\xi)=(f^1(\mx,\xi),\dots,f^d(\mx,\xi)), \ \ A_{\mx}(\xi)=(A_{j}^k(\mx,\xi))_{k,j=1,\dots,d}.
\end{align*}  
The divergence operator appearing in the equation is to be formed with respect to the metric,
so in local coordinates we have (cf.\ \eqref{divx} below):
\begin{equation}
\label{div-g}
\Div \mff_{\mx}(u)=\Div \big(\mx \mapsto \mff_{\mx}(u(t,\mx))\big)=\frac{\pa}{\pa x_k} (f^k_{\mx}(u(t,\mx))+\Gamma^j_{kj}(\mx) f^k_{\mx}(u(t,\mx))
\end{equation} 
where the $\Gamma$-terms are the Christoffel symbols of $g$ and the Einstein summation convention is in effect. 
Similarly, the right hand side of \eqref{main-eq} is to be understood as
\begin{equation}
\label{div-A} 
\Div(\mx\mapsto \Div ( A_{\mx}(u(t,\mx)))), 
\end{equation}  whose explicit local expression can be read off from \eqref{divdiv11} below.

Equation \eqref{main-eq} describes a flow governed by

\begin{itemize}

\item the convection effects (bulk motion of particles), which are
represented by the first order terms, i.e.\ by the flux $\mff$;

\item diffusion effects, which are represented by the second order
term, i.e., the $(1,1)$-tensor
$A_{\mx}(\xi)$  
(more precisely its derivative with respect to $\xi$, denoted by $a$; see
\eqref{stand}) which describes direction and intensity of the diffusion of, e.g., a fluid whose 
concentration at $\mx \in M$ at time $t\geq 0$ is $u(t,\mx)$.

\end{itemize} The equation is degenerate in the sense that $\pa_\xi A_\mx$ can be equal to zero in some direction for some $\mx\in M$ 
(i.e., $\langle A(\mx,\xi)\mxi,\mxi\rangle$ is not strictly increasing with respect to $\xi$).
Roughly speaking, if this is the case (i.e., if for some vector $\mxi
\in T_\mx M$ we have $\langle \pa_\xi A(\mx,\xi)\mxi,\mxi\rangle =0$), 
then diffusion effects do not exist at the point $\mx$ for the state
$\xi$ in the direction $\mxi$. 

We note that the usual form of a degenerate parabolic equation (see e.g. \cite{CK}) is
\begin{equation}
\label{stand}
\pa_t u +\Div \mff(\mx,u)=\Div(a(\mx,u) \nabla u ) ).
\end{equation} 
In the flat case (i.e., when $M=\R^d$ with the Euclidean metric), 
equation \eqref{main-eq} is obviously reduced to \eqref{stand} simply by putting $a(\xi)=A'(\xi)$,
where the prime denotes the derivative with respect to $\xi$ (with slightly more algebra, one can show that this also holds when $A$ depends on $(t,\mx)$ as well). However, form \eqref{stand} is not convenient for deriving the entropy conditions given in Definition \ref{def-entropy}.  

To resolve this
problem we follow the foundational works \cite{CK, CP} in introducing an appropriate
entropy admissibility concept for \eqref{main-eq}
under the following geometry compatibility  condition  (see \cite{BLf} for an appropriate notion in the case of scalar conservation laws):
\begin{equation}
\label{geomcomp}
\Div \mff_{\mx}(\xi)=\Div (\Div (A_{\mx}(\xi)) \ \ \text{for every $\xi\in \R$}.
\end{equation} We note that, from a physical point of view, this is an 
incompressibility condition (divergence of the (diffusive) flux $\mff_{\mx}(\xi)-\Div (A_{\mx}(\xi))$ is zero). Indeed, conservation of mass of an incompressible fluid in a control volume changes the density only due to the diffusion effects:
\begin{equation}
\label{1}
\frac{D\rho }{Dt}={\Div}(A'(x,\rho)\cdot\nabla \rho), \ \ A'(x,\rho)=\pa_\xi A(x,\xi)\big{|}_{\xi=\rho},
\end{equation}where $\rho$ is density of the control volume and $\frac{D\rho }{Dt}=\frac{\pa \rho}{\pa t}+\frac{d\mx}{dt} \cdot \nabla \rho$ is the material derivative for the flow velocity $\frac{d\mx}{dt}=(\frac{dx_1}{dt},\dots,\frac{dx_d}{dt})$. If we rewrite our equation in $\R^d$ (with the Euclidean metric, writing $\rho$ instead of $u$ and disregarding non-smoothness for the moment), we actually have
\begin{equation}
\begin{split}
\frac{\pa \rho}{\pa t}+\pa_\xi \big(f(\mx,\xi)-{\Div}A(\mx,\xi)\big)\big{|}_{\xi=\rho} \cdot \nabla \rho & +  {\rm Div} (f(\mx,\xi)-{\Div}A(\mx,\xi))\big{|}_{\xi=\rho}\\
&={\Div} (A'(\mx,\rho) \cdot \nabla \rho)
\label{2}
\end{split}
\end{equation} Then, taking as usual 
$
\frac{d\mx}{dt}=\pa_\xi \big(f(\mx,\xi)-{\Div}A(\mx,\xi)\big)\big{|}_{\xi=\rho}
$ and comparing \eqref{2} and \eqref{1}, we arrive at
$$
(\Div f(\mx,\xi)-\Div ( \Div ( A(\mx,\xi))))\big{|}_{\xi=\rho}=0,
$$ 
which immediately gives what we called the geometry compatibility condition.

Since the equation we consider is of degenerate parabolic type, solutions are not necessarily
smooth and weak solutions must be sought. Such a weaker solution concept may result in non-uniqueness, and so we need to eliminate ``non-physical'' solutions through an entropy admissibility concept (\cite{CK, CP}).

With appropriate admissibility conditions in place, we can fairly directly derive the kinetic formulation to \eqref{main-eq} (see \eqref{kin-eq-1}). This  generalization of similar previous results (\cite{CP, D, D1}) is, however, not enough to provide well-posedness of admissible solutions to \eqref{main-eq}. What has to be incorporated in the kinetic formulation is the chain rule (see Theorem \ref{chain}), originally introduced in \cite{CP}, and extended to the heterogeneous setting in \cite{CK}. We implement this in a general way, which does not presuppose the form of the kinetic function (see the comments after Remark \ref{rem5} below), and which may generate several stable semigroups of solutions (compare standard and non-standard shocks, for instance in \cite{vDP, Lfl}). We also note that our kinetic solution concept for degenerate parabolic equations is new also from the standard Euclidean point of view.

Degenerate parabolic equations appear in a broad spectrum of applications, such as sedimentation-consolidation processes (\cite{BCBT}) or flow in porous media (\cite{EK}), which very often occur in non-flat media (e.g., during the CO${}_2$ sequestration
process the caprock confining the brine in which gas is injected is basically never flat, cf.\ \cite{JMN}). In other words, in our situation, we consider a flow governed by the convection and diffusion effects along a non-flat surface. 

Nevertheless, due to obvious technical complexities, the equation was so far only considered on the entire space (see e.g.\ \cite{CK, CP, Db} and references therein). Moreover, while the existence problem was settled a fairly long time ago \cite{VH}, uniqueness in the case of an anisotropic diffusion was obtained only rather recently in \cite{CP} for homogeneous coefficients, and in \cite{CK} for the heterogeneous ones. Our strategy of proof follows the one developed in \cite{CP}. However, unlike the situation from these works, where the kinetic formulation is used only to prove uniqueness of solutions, here we develop the concept so that it can be used for the existence proof as well. This is in accordance with the standard kinetic approach used for conservation laws when the weak convergence of the kinetic functions (\cite{BV, MN, Pan_tams}) (or the Young measures (\cite{BLf, Dpe}), which is essentially equivalent) corresponding to a sequence of approximate solutions together with uniqueness of the kinetic function provide well-posedness of entropy solutions to \eqref{main-eq}, \eqref{ic}.

Although investigations concerning well-posedness of evolution equations on manifolds attracted a significant amount of attention recently, this problem for degenerate parabolic equations on manifolds has not been considered until now. The most closely related research is directed towards scalar conservation laws on manifolds and we mention \cite{BLf, LM, pan} for the Cauchy problem corresponding to scalar conservation laws on manifolds, and \cite{KMS, Pan_tams} for the (initial)-boundary value problem on manifolds. The approach in \cite{Pan_tams} is based on the kinetic formulation as well, and Definition 3.1. from there inspired our kinetic solution concept.

The paper is organized as follows. In Section \ref{diffprelsec} we introduce notions and notations from differential geometry as well as the entropy admissibility concept corresponding to \eqref{main-eq}. We then move on to derive the kinetic formulation of \eqref{main-eq}. In Section \ref{uniquesec}, we prove a uniqueness result for the kinetic formulation of the problem under consideration. Finally, in Section \ref{existencesec} we show existence of kinetic solutions as well as existence and uniqueness of entropy solutions.

\section{Preliminaries from Riemannian geometry and the entropy admissibility concept}\label{diffprelsec}

Our standard references for notions from Riemannian geometry are \cite{ON83,Pet06}.  For notions
and results from distributional geometry we refer to \cite{Mar,GKOS}. 
As already stated in the introduction,
$(M,g)$ will be a $d$-dimensional Riemannian manifold.  If $v$ is a distributional vector field on $M$  
then its gradient $\nabla v$ is the
vector field metrically equivalent to the exterior derivative $dv$ of $v$: $\langle \nabla v, X\rangle = dv(X) = X(v)$
for any $X\in \mathfrak{X}(M)$. 
In local coordinates,
\begin{equation}
\label{nabla}
\nabla v = g^{ij} \frac{\pa v}{\pa x^i} \pa_{j},
\end{equation} 
with $g^{ij}$ the inverse matrix to $g_{ij}=\langle \pa_{x^i},\pa_{x^j}\rangle$.
For $T\in \tensorkl(M)$, a divergence of $T$ is any contraction of one of its $k$ contravariant slots with
the new covariant slot of its covariant differential $\nabla T\in \mathcal{T}^k_{l+1}(M)$. 
In particular, if $k=1$ then $T$ possesses a unique divergence $\Div T \in \mathcal{T}^0_{l}(M)$. 
We list here the local coordinate expressions for the cases that will be of interest in
this paper. 

First, if $X\in \mathcal{T}^1_0=\mathfrak{X}(M)$ is a $C^1$ vector field on
$M$ with local representation $X=X^i\frac{\pa}{\pa x^i}$, then $\Div X\in C(M)$
is locally given by
\begin{equation}\label{divx}
\Div X = \frac{\pa X^k}{\pa x^k} + \Gamma^j_{kj}X^k.
\end{equation}
 The same expression holds for $X$ a distributional vector field, and similar for the formulae given below, which  
we formulate in the smooth case with the understanding that they carry over by continuous extension also to 
the distributional setting. 
If a $C^1$ one-form $\omega\in \mathcal{T}^0_1(M)=\Omega^1(M)$ is locally given by $\omega=\omega_idx^i$, then
its divergence is defined as the {\em metric} contraction of its covariant differential $\nabla \omega\in \mathcal{T}^0_2(M)$, so
\begin{equation} \label{divom}
\Div \omega = g^{ij} \pa_i\omega_j - \Gamma^k_{il} g^{il}\omega_k.
\end{equation} If $T\in \tensoroo(M)$, $T=T^k_i \frac{\pa}{\pa x^k}\otimes dx^i$, then $\Div T = (\Div T)_i dx^i$, where
\begin{equation}\label{div11}
(\Div T)_i = \pa_j T^j_i + \Gamma^j_{jl} T^l_i - \Gamma^l_{ji} T^j_l.
\end{equation}
Finally, again for $T\in \tensoroo(M)$, $\Div(\Div(T))\in C(M)$ is given in local coordinates by
\begin{align}\label{divdiv11}
\begin{split}
\Div(\Div(T)) = g^{ij}\big[  \pa_i\pa_kT^k_j + \Gamma^k_{kl} \pa_i T^l_j - &\Gamma^l_{kj}\pa_i T^k_l - \Gamma^k_{ij}\pa_l T^l_k
+(\pa_i \Gamma^k_{kl})T^l_j  \\
&  - (\pa_i \Gamma^l_{kj})T^k_l -
\Gamma^k_{ij}\Gamma^l_{lr}T^r_k + \Gamma^k_{ij}\Gamma^r_{kl}T^l_r \big]
\end{split}
\end{align}
In the Cauchy problem \eqref{main-eq}, \eqref{ic}, $(\mx,\xi)\mapsto A_\mx(\xi): M\times\R \to T^1_1M$ is $C^1$ and 
for each $\xi\in \R$, $\mx\mapsto A_\mx(\xi)\in \mathcal{T}^1_1(M)$. In general, if $T$ is a $(1,k)$-tensor with
$C^1$-dependence on an additional real variable $\xi$, i.e., $(\mx,\xi)\mapsto T_\mx(\xi): M\times\R \to T^1_kM$ is $C^1$ and 
for each $\xi\in \R$, $\mx\mapsto T(\mx,\xi)\in \mathcal{T}^1_k(M)$, then (recalling that the derivative with respect to $\xi$
is denoted by $T'$), it follows from the chain rule and the corresponding local expressions that
for an $H^1\cap L^\infty$-function $u: \R\times M \to \R$, we have
\begin{align}
\label{tensor1k}
\begin{split}
\Div(T(\mx,u(t,\mx)))_{i_1,\dots,i_k}-&\Div(\mx \mapsto T(\mx,\xi))_{i_1,\dots,i_k}\Big|_{\xi=u(t,\mx)}\\
&= T'^j_{i_1,\dots,i_k}(\mx,u(t,\mx)) \pa_j u(t,\mx).
\end{split} 
\end{align}
Furthermore, if $(\mx,\xi) \mapsto \omega(\mx,\xi): M \times \R \to T^0_1 M$ is such that for every $\xi \in \R$ it holds
that  $\mx \mapsto T(\mx,\xi) \in {\cal T}_1^0(M)=\Omega^1(M)$, we obtain from \eqref{divom}
\begin{equation}
\label{tensor01}
\Div(\omega(\mx,u(t,\mx))-\Div(\mx\mapsto \omega(\mx,\xi))|_{\xi=u(t,\mx)}=g^{ij}(\mx) \omega'_i(\mx,u(t,\mx))\pa_j u(t,\mx).
\end{equation} 
Finally, by $H^{1,2}(\R^+\times M)$ we denote the Sobolev space of order $1$ in the $t$-variable and of order $2$ in the
$\mx$-variable.
After these preparations we can prove:
\begin{theorem}
\label{thm1}Assume that the compatibility condition \eqref{geomcomp} holds and that $u: \R^+\times M \to \R$ is a bounded $H^{1,2}(\R^+\times M)$ non-negative solution to 
\eqref{main-eq}. Then for any $S\in C^2(\R)$ such that $S(0)=0$ we have
\begin{align}\label{a1}
\begin{split}
\pa_t S(u)+ \Div &\int_0^{u(t,\mx)} \mff'_{\mx}(\xi)S'(\xi)\, d\xi\\
   &=\Div \Div \Big(\int_0^{u(t,\mx)} A_{\mx}'(\xi)S'(\xi)\, d\xi\Big)-S''(u) \langle A'_{\mx}(u) \nabla u,\nabla u \rangle,
\end{split}
\end{align} 
where $\mff'=\pa_\xi \mff$ and $A'=\pa_\xi A$.
\end{theorem}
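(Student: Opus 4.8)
The plan is to prove \eqref{a1} by a direct computation in local coordinates, reducing everything to the flat chain-rule identities \eqref{tensor1k} and \eqref{tensor01}, and then using the compatibility condition \eqref{geomcomp} to cancel the ``extra'' zeroth-order terms. First I would apply the ordinary chain rule in $t$ to get $\pa_t S(u) = S'(u)\pa_t u$, and then substitute $\pa_t u = -\Div \mff_\mx(u) + \Div\Div(A_\mx(u))$ from \eqref{main-eq}; since $u\in H^{1,2}\cap L^\infty$, all the terms on the right are at least in $L^2_{\mathrm{loc}}$, so this is legitimate. This turns the left-hand side of \eqref{a1} into
\begin{equation*}
S'(u)\big(-\Div \mff_\mx(u)+\Div\Div(A_\mx(u))\big) + \Div\!\int_0^{u}\mff'_\mx(\xi)S'(\xi)\,d\xi,
\end{equation*}
and the goal becomes an identity between this expression and the right-hand side of \eqref{a1}.

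Next I would treat the convective terms. Using \eqref{tensor1k} with $k=0$ (i.e.\ $T=\mff$ a vector field), write $\Div\mff_\mx(u) = (\Div\mff)_\mx(\xi)\big|_{\xi=u} + \mff'^j_\mx(u)\pa_j u$, where $(\Div\mff)_\mx$ means the divergence taken with $\mx$ alone, at frozen $\xi$. A parallel identity (again a consequence of the coordinate formula \eqref{divx} applied to the vector field $\mx\mapsto\int_0^{u(\mx)}\mff'_\mx(\xi)S'(\xi)\,d\xi$, together with differentiation under the integral sign and the fundamental theorem of calculus in the upper limit) gives
\begin{equation*}
\Div\!\int_0^{u}\mff'_\mx(\xi)S'(\xi)\,d\xi = \int_0^{u}(\Div\mff')_\mx(\xi)S'(\xi)\,d\xi + S'(u)\,\mff'^j_\mx(u)\,\pa_j u.
\end{equation*}
Adding this to $-S'(u)\Div\mff_\mx(u)$, the two $S'(u)\mff'^j_\mx(u)\pa_j u$ terms cancel, leaving the ``frozen'' contribution $-S'(u)(\Div\mff)_\mx(u) + \int_0^{u}(\Div\mff')_\mx(\xi)S'(\xi)\,d\xi$; integrating the latter by parts in $\xi$ (note $(\Div\mff')_\mx(\xi) = \pa_\xi(\Div\mff)_\mx(\xi)$ since divergence in $\mx$ and differentiation in $\xi$ commute), this equals $-\int_0^{u}(\Div\mff)_\mx(\xi)S''(\xi)\,d\xi$ after using $S(0)=0$ — wait, more precisely it collapses to a single $\xi$-integral of $(\Div\mff)_\mx$ against $S''$. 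I would carry out the analogous manipulation on the diffusive side: \eqref{tensor1k} with $k=1$ handles the inner $\Div$, and then \eqref{tensor01} handles the outer $\Div$ applied to the resulting one-form; differentiating under the integral and using the fundamental theorem of calculus on $\int_0^u A'_\mx(\xi)S'(\xi)\,d\xi$ produces exactly the term $\Div\Div\big(\int_0^u A'_\mx(\xi)S'(\xi)\,d\xi\big)$ plus cross terms, one of which is $-S''(u)\langle A'_\mx(u)\nabla u,\nabla u\rangle$ (coming from the $g^{ij}\omega'_i\pa_j u$ piece of \eqref{tensor01} with $\omega = \Div(A'_\mx(\xi)S'(\xi)$-stuff$)$) and the rest being $\xi$-integrals of $\Div\Div(A_\mx)$-type terms against $S''$.

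The final step is the cancellation: after the dust settles, all the ``frozen-coefficient'' remainders combine into $\int_0^{u}\big((\Div\mff)_\mx(\xi) - \Div\Div(A_\mx(\xi))\big)S''(\xi)\,d\xi$ (plus possibly a boundary term at $\xi=0$, which vanishes because $S(0)=0$ and, if needed, by adjusting so only $S''$ appears), and this integral is identically zero by the geometry compatibility condition \eqref{geomcomp}. I expect the main obstacle to be purely bookkeeping: keeping straight the distinction between the divergence of $\mx\mapsto T_\mx(u(\mx))$ and the divergence of $\mx\mapsto T_\mx(\xi)$ at frozen $\xi$, correctly differentiating the $\xi$-integrals with variable upper limit (which generates the crucial $S'(u)\cdot(\text{full }\xi\text{-derivative})$ boundary contributions that produce the cancellations and the quadratic term), and verifying that the second-order operator $\Div\Div$ interacts with $\int_0^u(\cdot)\,d\xi$ via \eqref{tensor1k} and \eqref{tensor01} to yield precisely the one quadratic term $-S''(u)\langle A'_\mx(u)\nabla u,\nabla u\rangle$ and no other uncancelled first-derivative-of-$u$ terms. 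No new analytic input is needed beyond the regularity $u\in H^{1,2}\cap L^\infty$, which justifies all the distributional products and the integrations by parts.
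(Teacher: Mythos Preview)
Your approach is essentially the paper's: expand each divergence using the chain-rule identities \eqref{tensor1k} and \eqref{tensor01}, separate the ``full'' divergence $\Div(T_\mx(u))$ from the ``frozen-$\xi$'' divergence $\Div(T_\mx(\xi))|_{\xi=u}$, and use \eqref{geomcomp} to eliminate the frozen-$\xi$ leftovers. The paper organises it slightly differently---it derives \eqref{f2} and \eqref{f6} as standalone identities and only invokes the PDE at the very last line---but the content is identical.

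One small wobble: your integration by parts in $\xi$ on the convective side produces a boundary term $-(\Div\mff)_\mx(0)\,S'(0)$, which you try to dispose of via $S(0)=0$. That hypothesis does not kill $S'(0)$. The paper sidesteps this by \emph{not} integrating by parts in $\xi$: it keeps the term $\int_0^u S'(\xi)\,\Div\mff'_\mx(\xi)\,d\xi$ intact and matches it directly against the analogous term $\int_0^u S'(\xi)\,\Div\Div A'_\mx(\xi)\,d\xi$ arising from \eqref{f6}, since differentiating \eqref{geomcomp} in $\xi$ gives $\Div\mff'_\mx(\xi)=\Div\Div A'_\mx(\xi)$. (Even on your route the $\xi=0$ boundary terms would cancel, again by \eqref{geomcomp} at $\xi=0$, but invoking $S(0)=0$ is the wrong reason.) Apart from this bookkeeping point, your outline is correct.
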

\begin{proof}
First, note that for any $f\in C^1(M;\R)$, $\omega \in {\cal T}_1^0(M)$ we have
\begin{equation}
\label{f1}
\Div (f \omega)= f \Div \omega+ g^{ij}\pa_j f \omega_i= f\Div \omega + \omega(\nabla f).
\end{equation} 
Based on this, we calculate for any $S\in C^2(\R)$ such that $S(0)=0$ (keeping in mind that $u$ is non-negative):
\begin{align}\label{f2}
\begin{split}
\Div \Big( &\int_0^{u(t,\mx)} \mff_{\mx}'(\xi) S'(\xi)\, d\xi \Big)\\
&=S'(u(t,\mx)) {f'}^i(\mx,u(t,\mx))\pa_i u(t,\mx)+\int_0^{u(t,\mx)}S'(\xi) \pa_i {f'}^i(\mx,\xi)\,d\xi \\
&\qquad\qquad\qquad\qquad\qquad\qquad\qquad\quad+\int_0^{u(t,\mx)}S'(\xi) \Gamma^j_{kj} {f'}^k(\mx,\xi)\,d\xi\\
&\overset{\eqref{tensor1k}}{=} S'(u(t,\mx))\Div (\mff_{\mx}(u(t,\mx))-S'(u(t,\mx))\Div (\mff_{\mx}(\xi))\Big|_{\xi=u(t,\mx)} \\
&\qquad\qquad\qquad\qquad\qquad\qquad\qquad\quad+\int_0^{u(t,\mx)}S'(\xi) \Div (\mff'(\mx,\xi))\,d\xi.
\end{split}
\end{align} 
Also, 
\begin{align}
\label{f3}
\begin{split}
\Big( \Div & \int_0^{u(t,\mx)}A'_{\mx}(\xi) S'(\xi)d\xi \Big)_{i}\\
&\overset{\eqref{tensor1k}}{=} {A'}^j_i(\mx,u(t,\mx))S'(u(t,\mx))\pa_j u +\Div \left(\int_0^\xi A'_{\mx}(v) S'(v)dv \right)_i\Big|_{\xi =u(t,\mx)} \\ 
& \overset{\eqref{tensor1k}}{=} S'(u(t,\mx))\Div(A_{\mx}(u(t,\mx)))_i-S'(u(t,\mx))\Div(A_\mx(\xi))_i\Big|_{\xi=u(t,\mx)}  \\
& \qquad\qquad\qquad\qquad\qquad\qquad\qquad\qquad+\int_0^{u(t,\mx)}\Div(A'_{\mx}(\xi))_i S'(\xi) d\xi.
\end{split}
\end{align} Now set $\tilde{\omega}(\mx,\xi):= \Div \int_0^\xi A'_{\mx}(v)S'(v)dv$ and $\bar{\omega}(\mx,\xi):=\Div(A_\mx(\xi))$.\ Using this notation and applying \eqref{f1} to the first two terms on the right-hand side of \eqref{f3}, we obtain
\begin{align}
\label{f4}
\begin{split}
\Div (\tilde{\omega}(\mx,&u(t,\mx))\!=\!S'(u(t,\mx))\Div \Div (A_{\mx}(u(t,\mx)))\\
&+\!g^{ij}\Div(A_{\mx}(u(t,\mx))_i S''(u(t,\mx))\pa_j u-S'(u(t,\mx))\Div(\bar{\omega}(\mx,u(t,\mx))
\\
&-g^{ij}\bar{\omega}_i(\mx,u(t,\mx))S''(u(t,\mx))\pa_j u+\Div \int_0^{u(t,\mx)} \Div(A'_{\mx}(\xi) S'(\xi) d\xi.
\end{split}
\end{align} 
Here,
\begin{align}
\label{f5}
\begin{split}
&\Div \int_0^{u(t,\mx)} \Div(A'_{\mx}(\xi)) S'(\xi)\, d\xi \\
&\overset{\eqref{tensor01}}{=} g^{ij} \Div (A'_{\mx}(\xi))\Big|_{\xi=u(t,\mx)} S'(u(t,\mx))\pa_ju
+\Div\!\! \int^\xi_0\Div(A'_{\mx}(v))S'(v)dv \Big|_{\xi=u(t,\mx)}  \\
&=S'(u(t,\mx))g^{ij}\bar{\omega}'_i(\mx,u(t,\mx))\pa_j u +\int_0^{u(t,\mx)}\Div \Div (A'_{\mx}(\xi))S'(\xi)\,d\xi \\
&\overset{\eqref{tensor01}}{=}S'(u(t,\mx))\left(\Div(\bar{\omega}(t,u(t,\mx)))-\Div(\bar{\omega}(\mx,\xi))\Big|_{\xi=u(t,\mx)} \right) \\
&\qquad\qquad\qquad\qquad\qquad\qquad\qquad+\int_0^{u(t,\mx)}\Div\Div(A'_{\mx}(\xi))S'(\xi)\,d\xi.
\end{split}
\end{align} 
From \eqref{f4} and \eqref{f5}, we conclude
\begin{align}
\label{f6}
\begin{split}
\Div &\Div \Big(\int_0^{u(t,\mx)}
A'_{\mx}(\xi) S'(\xi)\,d\xi \Big)\\
&=S'(u(t,\mx))\Div \Div (A_\mx(u(t,\mx)))-S'(u(t,\mx)) \Div\Div (A_\mx(\xi))\Big|_{\xi=u(t,\mx)}\\
&+S''(u(t,\mx))g^{ij}{A'}^r_{i}(\mx,u(t,\mx))\pa_r u \pa_j u+\int_0^{u(t,\mx)}\Div \Div(A'_{\mx}(\xi)) S'(\xi)\,d\xi \\
&=S'(u(t,\mx))\Div \Div(A_\mx(u(t,\mx)))-S'(u(t,\mx))\Div\Div (A_\mx(\xi)\Big|_{\xi=u(t,\mx)} \\
&+S''(u(t,\mx))\langle A'_{\mx}(u(t,\mx)) \nabla u,\nabla u \rangle+\int_0^{u(t,\mx)}\Div \Div (A'_{\mx}(\xi)) S'(\xi)\,d\xi. 
\end{split}
\end{align} 
Finally, 
\begin{align*}
&\Div (\int_0^{u(t,\mx)}\mff'_{\mx}(\xi)S'(\xi)\,d\xi)\overset{\eqref{f2}}{=} S'(u(t,\mx)) 
\left(\Div (\mff_{\mx}(u(t,\mx))-\Div(f(t,\xi))\Big|_{\xi=u(t,\mx)} \right) \\
&\qquad\qquad\qquad\qquad\qquad\qquad\qquad\qquad\qquad\qquad+\int_0^{u(t,\mx)} S'(\xi)\Div \mff_\mx'(\xi)\,d\xi \nonumber\\
&\overset{\eqref{geomcomp}}{=}S'(u(t,\mx))\Div(\mff_{\mx}(u(t,\mx))-S'(u(t,\mx))\Div \Div (A_{\mx}(\xi))\Big|_{\xi=u(t,\mx)} \nonumber \\&\qquad\qquad\qquad\qquad\qquad\qquad\qquad\qquad\qquad\qquad+\int_0^{u(t,\mx)}S'(\xi)\Div \Div A'_{\mx}(\xi)\,d\xi\\
&\overset{\eqref{f6}}{=} S'(u(t,\mx))\Div(\mff_{\mx}(u(t,\mx))+\Div \Div \Big(\int_0^{u(t,\mx)}A'_{\mx}(\xi)S'(\xi)\,d\xi\Big) \nonumber \\
&\qquad\qquad-S'(u(t,\mx))\Div\Div(A_{\mx}(u(t,\mx))-S''(u(t,\mx))\langle A'_{\mx}(u(t,\mx)) \nabla u,\nabla u \rangle,
\end{align*} which is  \eqref{a1}. \end{proof}

Another property of the entropy solution that we shall require is the so-called chain rule. 
It was introduced in \cite{CP} in the homogeneous case and adapted to the inhomogeneous situation in \cite{CK}. 
To formulate it, we first recall that $A_\mx'(\xi)=\sigma_\mx(\xi)^\top \sigma_\mx(\xi)$  by \eqref{Apd} and note that if $\sigma$ is locally given by $\sigma=\sigma^k_i \frac{\pa}{\pa x^k}\otimes dx^i$, then
\begin{equation}\label{transpose}
\sigma^\top = (\sigma^\top)^k_i \frac{\pa}{\pa x^k}\otimes dx^i \ \text{with} \ 
(\sigma^\top)^k_i = g^{kl}\sigma^m_l g_{mi}.
\end{equation}
Given $\psi:\R \to \R^+$, we now consider $\beta(\mx,\xi)$ such that $\beta'(\mx,\xi)= \sigma_\mx^\top(\xi)$, and
$\beta^\psi(\mx,\xi)$ such that $(\beta^\psi)'(\mx,\xi) = \sqrt{\psi(\xi)}\sigma_\mx^\top(\xi)$ and $\beta(\mx,0)=\beta^\psi(\mx,0)=0$
 (recall that a prime here denotes the derivative with respect to the real variable $\xi$). 
In local coordinates, this reads
\begin{align}
\label{beta}
\begin{split}
(\beta^{k}_{i})'(\mx,\xi) &= g^{kl}(\mx)\sigma^m_l(\mx,\xi) g_{mi}(\mx),\\
  ((\beta^\psi)^k_i)'(\mx,\xi) &= \sqrt{\psi(\xi)} g^{kl}(\mx)\sigma^m_l(\mx,\xi) g_{mi}(\mx).
\end{split}
\end{align}

We will need the following result on the divergences of the $\beta$-tensors:

\begin{theorem} (Chain rule)
\label{chain}
If $u:[0,\infty)\times M\to \R$ is a non-negative bounded $H^{1,2}(\R^+\times M)$ function, then for any non-negative $\psi\in C(\R)$  we have
\begin{align}
\label{crule}
\begin{split}
&\Div\Big(\beta^{ \psi}(\mx,u(t,\mx))\Big)-\Div(\beta^{ \psi}(\mx,\xi))\Big|_{\xi=u(t,\mx)}\\
&\qquad\qquad=\sqrt{\psi(u(t,\mx))} \Big(\Div(\beta(\mx,u(t,\mx))-\Div \beta(\mx,\xi )\Big|_{\xi=u(t,\mx)}\Big).
\end{split}
\end{align} 
\end{theorem}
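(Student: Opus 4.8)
The plan is to exploit the identity \eqref{tensor1k} for $(1,1)$-tensor fields, applied to the $\xi$-parametrized tensors $\beta$ and $\beta^\psi$, and to reduce the claimed relation to a pointwise statement about the ``correction terms'' that appear when one differentiates through the $u$-dependence. Concretely, \eqref{tensor1k} (with $k=1$) says that for a one-parameter $(1,1)$-tensor $T$,
\begin{equation*}
\Div(T(\mx,u(t,\mx)))_i - \Div(\mx\mapsto T(\mx,\xi))_i\big|_{\xi=u(t,\mx)} = T'^j_i(\mx,u(t,\mx))\,\pa_j u(t,\mx).
\end{equation*}
Applying this with $T=\beta^\psi$ and then with $T=\beta$, the left-hand side of \eqref{crule} equals $(\beta^\psi)'^j_i(\mx,u)\,\pa_j u\, dx^i$, while the bracket on the right-hand side equals $\beta'^j_i(\mx,u)\,\pa_j u\, dx^i$. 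So everything comes down to comparing $(\beta^\psi)'(\mx,\xi)$ with $\sqrt{\psi(\xi)}\,\beta'(\mx,\xi)$ at $\xi=u(t,\mx)$.

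First I would note that by the defining relations \eqref{beta}, we have exactly $((\beta^\psi)^k_i)'(\mx,\xi) = \sqrt{\psi(\xi)}\,g^{kl}(\mx)\sigma^m_l(\mx,\xi)g_{mi}(\mx) = \sqrt{\psi(\xi)}\,(\beta^k_i)'(\mx,\xi)$ as an identity of functions on $M\times\R$ — this is not an almost-everywhere statement but holds at every point. Substituting $\xi = u(t,\mx)$ gives $(\beta^\psi)'^j_i(\mx,u)\,\pa_j u = \sqrt{\psi(u)}\,\beta'^j_i(\mx,u)\,\pa_j u$ pointwise (a.e.\ in $(t,\mx)$, since $\pa_j u$ is only defined a.e.\ for an $H^{1,2}$ function). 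Combining with the two applications of \eqref{tensor1k} then yields \eqref{crule} directly, as a componentwise identity of one-forms, hence as an identity in $\mathcal{T}^0_1$.

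The one genuine subtlety — and the step I expect to be the main obstacle, in the sense of needing care rather than deep ideas — is the regularity bookkeeping: \eqref{tensor1k} was stated for $H^1\cap L^\infty$ functions $u$ and for $C^1$-in-$(\mx,\xi)$ tensors $T$, whereas $\beta$ and $\beta^\psi$ are defined by integration of $\sigma^\top$, which is $C^2$ in $(\mx,\xi)$, so $\beta,\beta^\psi$ are $C^2$ (in $\mx$) and $C^3$-ish in $\xi$ — comfortably enough. The factor $\sqrt{\psi}$ is only assumed $\psi\in C(\R)$ nonnegative, so $\sqrt{\psi}$ is merely continuous and $\beta^\psi$ is $C^1$ in $\xi$ but its $\xi$-derivative need not be $C^1$ in $\xi$; however \eqref{tensor1k} only needs one $\xi$-derivative of $T$ to make sense of the right-hand side and $C^1$-dependence in $\mx$ (which holds since $\sqrt{\psi(\xi)}$ carries no $\mx$-dependence and $\sigma^\top$ is $C^2$ in $\mx$), so the hypotheses are met. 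I would make this explicit: $\mx\mapsto \beta^\psi(\mx,\xi)$ is $C^2$ for each fixed $\xi$, and $\xi\mapsto (\beta^\psi)'(\mx,\xi)$ is continuous, which is exactly what is required to invoke \eqref{tensor1k} and to evaluate the composed expressions a.e.

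Finally, I would remark that once the componentwise identity holds a.e.\ in $(t,\mx)$, it holds as an equality of ($L^2_{\mathrm{loc}}$, or distributional) one-forms on $\R^+\times M$, which is the intended meaning of \eqref{crule}; no further gluing across charts is needed because \eqref{tensor1k} is itself a coordinate-independent statement (both sides being genuine tensor fields). Thus the proof is essentially: (i) invoke \eqref{tensor1k} twice, (ii) observe the algebraic identity $(\beta^\psi)' = \sqrt{\psi}\,\beta'$ from \eqref{beta}, (iii) substitute $\xi=u$ and contract with $\pa u$. The content is entirely in having set up $\beta$ and $\beta^\psi$ so that their $\xi$-derivatives differ by precisely the scalar factor $\sqrt{\psi(\xi)}$.
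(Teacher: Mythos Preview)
Your proposal is correct and follows essentially the same approach as the paper: the paper's proof computes both sides directly in local coordinates via \eqref{div11}, obtaining $\Div(\beta^\psi(\mx,u))_i-\Div(\beta^\psi(\mx,\xi))_i|_{\xi=u}=\sqrt{\psi(u)}(\sigma^\top_\mx)^j_i(u)\pa_j u$ and the analogous identity for $\beta$, which is precisely the content of \eqref{tensor1k} specialized to $T=\beta^\psi$ and $T=\beta$. Your version simply invokes \eqref{tensor1k} directly rather than rederiving it, which is a legitimate and slightly more economical packaging of the same computation.
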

\begin{proof} Using \eqref{div11}, and writing $u$ for $u(t,\mx)$ we calculate 
\begin{align}
\label{8}
\begin{split}
\Div (\beta^\psi(\mx,u))_i &=\pa_j  ((\beta^\psi)^j_i(\mx,u))
+ (\beta^\psi)^l_i(\mx,u) \Gamma_{jl}^j- (\beta^\psi)^j_l(\mx,u)\Gamma_{ji}^l \\
&  =\sqrt{\psi(u)} (\sigma^\top_\mx)_i^j(u) \pa_j u+\int_0^u\sqrt{\psi(\xi)} \pa_j 
 (\sigma^\top_\mx)_i^j(\xi)\, d\xi   \\
& \qquad+ (\beta^\psi)^l_i(\mx,u)\Gamma_{jl}^i  -(\beta^\psi)^j_l(\mx,u) \Gamma_{ji}^l .
\end{split}
\end{align} 
Also, 
\begin{equation*}
\Div (\beta^\psi(\mx,\xi))_i\Big|_{\xi=u}=\int_0^u\sqrt{\psi(\xi)}\pa_j 
(\sigma^\top_\mx)_i^j(\xi)\, d\xi+ (\beta^\psi)^l_i(\mx,u)\Gamma_{jl}^j
- (\beta^\psi)^j_l(\mx,u)\Gamma_{ji}^l,
\end{equation*} 
and therefore (compare with \eqref{8})
$$
\Div (\beta^\psi(\mx,u))_i-\Div (\beta^\psi(\mx,\xi))_{i}\Big|_{\xi=u}=\sqrt{\psi(u)} 
(\sigma^\top_\mx)_i^j(u) \pa_j u.
$$ Analogously,
\begin{equation}\label{divbs}
\Div (\beta(\mx,u))_i-\Div (\beta(\mx,\xi))_i\Big|_{\xi=u}= (\sigma^\top_\mx)_i^j(u) \pa_j u,
\end{equation} 
which gives the claim. 
\end{proof} 
Combining \eqref{divbs} with \eqref{transpose} we obtain that we can rewrite the last term in Theorem \ref{thm1} using:
\begin{equation}\label{adudu}
\langle A'_{\mx}(u) \nabla u,\nabla u \rangle = 
\left|\Div(\beta(\mx,u(t,\mx))-\Div(\beta(\mx,\xi )\big|_{\xi=u(t,\mx)} \right|_g^2,
\end{equation}
where $|\omega|_g = (g^{ij}\omega_i \omega_j)^{1/2}$ denotes the norm induced by $g$ on the space of one-forms.

Following \cite{CP}, we are next going to introduce an appropriate concept of entropy solution to \eqref{main-eq}, \eqref{ic}.
The definition of entropy solutions, as well as ultimately the proof of their existence, rests on vanishing viscosity approximations 
\begin{equation}
\label{VV}
\pa_t u_\eta +\Div \mff_{\mx}(u_\eta)=\Div(\Div ( A_{\mx}(u_\eta) ) )+\eta \Delta u_\eta,
\end{equation} 
where $\eta>0$ is some small constant. Here, $\Delta$ is the Laplace-Beltrami operator on the manifold given, 
for any $h\in C^\infty(M)$, by $\Delta h =\Div\circ \nabla h$, with $\Div$ and $\nabla$ 
as in \eqref{div-g} and \eqref{nabla}, 
respectively. In terms of local coordinates, setting $|g|:=\det g$, 
\begin{equation}\label{laplacedef}
\Delta h = \frac{1}{\sqrt{|g|}}\pa_i (\sqrt{|g|}g^{ij}\pa_j h).
\end{equation} 
It follows from Theorem \ref{thm1} and \eqref{adudu} that if $u_\eta$ is a bounded $H^{1,2}(\R^+\times M)$-solution to \eqref{VV} then
\begin{align}
\label{entr-ineq-prep}
\begin{split}
&\pa_t S(u_\eta)+\Div \int_0^{u_\eta(t,\mx)} \mff'_{\mx}(\xi)S'(\xi) d\xi = \Div \Div (\int_0^{u_\eta(t,\mx)} A_{\mx}'(\xi)S'(\xi) d\xi)\\
&-S''(u_\eta)\left|\Div(\beta(\mx,u_\eta(t,\mx))-\Div(\beta(\mx,\xi )\big|_{\xi=u_\eta(t,\mx)} \right|_g^2+\eta S'(u_\eta)\Delta u_\eta
\end{split}
\end{align}
Noting that we have $\Delta(S(w)) = S''(w) |\nabla w|^2 + S'(w)\Delta w$ for any bounded $H^2$-function $w$ on $M$, we can rewrite the
last term in \eqref{entr-ineq-prep} as 
$$
\eta S'(u_\eta)\Delta u_\eta = \eta \Delta S(u_\eta) - \eta S''(u_\eta) |\nabla u_\eta|^2.
$$
Finally, 
\begin{equation}
\label{m-eta}
\eta S''(u_\eta) |\nabla u_\eta|^2 = \int_{\R} S''(\xi)m_\eta(t,\mx,\xi)d\xi,
\end{equation}
where $m_\eta(t,\mx,\xi)=\eta \delta(\xi-u_\eta(t,\mx))|\nabla u_\eta|^2$ is a non-negative measure on $[0,\infty)\times M\times \R$. We shall also denote 
\begin{equation}
\label{n-eta}
n_\eta(t,\mx,\xi)=\delta(\xi-u_\eta(t,\mx))\left|\Div(\beta(\mx,u_\eta(t,\mx)))-\Div(\beta(\mx,\xi )) \right|_g^2,
\end{equation} 
which is a non-negative measure as well.  Note that for $u_\eta \geq 0$ the measures $n_\eta$ and $m_\eta$ are both supported in $[0,\infty)\times M\times [0,\infty)$. So we may rewrite \eqref{entr-ineq-prep} as 
\begin{align}
\label{new}
\begin{split}
\pa_t S(u_\eta)&+\Div \int_0^{u_\eta(t,\mx)} \mff'_{\mx}(\xi)S'(\xi) d\xi = \Div \Div \Big(\int_0^{u_\eta(t,\mx)} A_{\mx}'(\xi)S'(\xi) d\xi\Big)\\
&-\int_{0}^\infty S''(\xi)(n_\eta (t,\mx,\xi)+m_\eta (t,\mx,\xi))d\xi + \eta \Delta S(u_\eta)
\end{split}
\end{align}

Further, if we choose $S(u)=u^2/2$ and then integrate \eqref{new} over $M \times [0,\infty)$, we have 

\begin{equation}
\label{energy-m} 
\int_{\R^+\times M \times \R^+} (n_\eta + m_\eta ) d\xi d\mu(\mx) dt = \int_{M} \frac{1}{2}|u_0(\mx)|^2 d\mu(\mx) \, 
\end{equation}

Integration here is carried out with respect to the Riemannian density $\sqrt{|g|}$ induced
by $g$.   In local coordinates, $d\mu(\mx)=\sqrt{|g|}\,d\mx$, where $\sqrt{|g|} = \sqrt{|\det(g_{ij})|}$. 

Based on these observations, the following definition 
of entropy solutions extracts those properties that are stable under strong convergence (analogous
to \cite[Def.\ 2.1]{CP}).

\begin{definition}
\label{def-entropy}
We say that the measurable function $u:[0,\infty)\times M\to [0,1]$ is an entropy solution to \eqref{main-eq}, \eqref{ic} if 

\noindent(i)
\begin{equation}\label{L2-reg}
\Div(\beta(\mx,u(t,\mx))-\Div(\beta(\mx,\xi))\Big|_{\xi=u(t,\mx)} \in L^2([0,\infty)\times M); 
\end{equation}
(ii) There exists a non-negative measure $m$ on $[0,\infty)\times M\times [0,\infty)$ 
such that for any function $S\in C^2([0,\infty))$, 
the following equality
holds, together with the initial condition \eqref{ic}, in the sense of distributions on ${\cal D}'([0,\infty)\times M)$:
\begin{align}
\label{entr-ineq}
\begin{split}
&\pa_t S(u)+\Div \int_0^{u(t,\mx)} \mff'_{\mx}(\xi)S'(\xi) d\xi = \Div \Div \Big(\int_0^{u(t,\mx)} A_{\mx}'(\xi)S'(\xi) d\xi\Big)\\
&-S''(u)\left|\Div(\beta(\mx,u(t,\mx)))-\Div(\beta(\mx,\xi ))\big|_{\xi=u(t,\mx)} 
\right|_g^2 - \int_{0}^\infty S''(\xi)m(t,\mx,\xi)d\xi;
\end{split}
\end{align}
\noindent(iii) The chain rule \eqref{crule} holds in $L^2(\R^+\times M)$.
\end{definition} 

Equation \eqref{VV} is not a standard viscous approximation, but it is still a strictly parabolic equation. A viable approach to establishing
existence of entropy solutions to \eqref{main-eq}, \eqref{ic} would be to invoke \cite[Section V]{LSU}
to obtain existence of a solution to \eqref{VV}, \eqref{ic} for every $\eta>0$ and then showing that the net $(u_\eta)$ so obtained converges 
(in an appropriate sense) towards the entropy solution to \eqref{main-eq}, \eqref{ic}. Instead of implementing this approach directly, 
we shall first introduce a kinetic formulation of \eqref{main-eq}, \eqref{ic} on $M$ and then prove existence of the entropy 
solution by proving uniqueness of the kinetic solution (see e.g.\ \cite{BLf, Dpe, Pan_tams} for such an approach in the case of scalar conservation laws).

To this end, let us rewrite \eqref{entr-ineq} in the kinetic formulation. Set
$$
\chi_u(t,\mx,\xi) :=
\begin{cases}
\hphantom{-}1, & 0\leq \xi \leq u(t,\mx)\\
-1, & u(t,\mx) \leq \xi \leq 0\\
\hphantom{-}0, & \text{otherwise.}
\end{cases}
$$ 
Notice that if $0\leq u$, then for $\xi\geq 0$,
\begin{equation}
\label{positive}
\chi_u(t,\mx,\xi)={\rm sgn}_+(u(t,\mx)-\xi).
\end{equation} 
Taking into account that when $h(\mx,0)=0$, we have
\begin{equation}
\label{integralofchi}
h(\mx,u(t,\mx))=h(\mx,u(t,\mx))-h(\mx,0)=\int_{\R} h'(\mx, \xi) \chi_u(t,\mx,\xi) d\xi,
\end{equation}
we can rewrite \eqref{entr-ineq} in the so-called kinetic form as follows:
\begin{align*}
\pa_t \int_{\R}S'(\xi) \chi_u\, d\xi +\Div & \Big(\int_{\R} \chi_u S'(\xi) \mff'_{\mx}(\xi)\, d\xi\Big) \\
&= \Div \Div \Big(\int_{\R} \chi_u S'(\xi) A_{\mx}'(\xi) d\xi \Big)-\int_{\R} S''(\xi) (n+m)\, d\xi,
\end{align*} 
where 
\begin{align}\label{eq:def n}
n(t,\mx,\xi)&=\delta(\xi-u(t,\mx))\left|\Div(\beta(\mx,u(t,\mx)))-\Div(\beta(\mx,\xi )) \right|_g^2.
\end{align} 
Considering $S'$ as a test function supported in $(0,\infty)$, we conclude 
\begin{equation}
\label{kinetic}
\pa_t \chi_u +\Div (\chi_u\mff'_{\mx}(\xi)) = \Div \Div (\chi_u A_{\mx}'(\xi))+\pa_\xi (n+m), \ \ \xi \in (0,\infty).
\end{equation} 

 Next, we shall need a local version of the kinetic equation. 
Accordingly, let $\phi \in C_c^{2}(M)$. Then multiplying \eqref{kinetic} by $\phi$ and 
inserting gives
\begin{equation*}
\begin{split}
\pa_t (\phi\chi_u) +\Div \left(\phi \chi_u \mff'_{\mx}(\xi) \right) &= 
\phi \pa_t \chi_u + \phi \Div(\chi_u \mff'_{\mx}(\xi)) +\chi_u \mff'_{\mx}(\xi)(\phi)\\
&= \phi \Div \Div (\chi_u A_{\mx}'(\xi)) + \phi \pa_\xi (n+m) + \chi_u \mff'_{\mx}(\xi)(d\phi).
\end{split}
\end{equation*}
Furthermore,
\begin{equation*}
\begin{split}
\Div \Div (\phi \chi_u A_{\mx}'(\xi)) &= \Div(\chi_u A_{\mx}'(\xi)(d\phi))
+\Div(\phi \Div(\chi_u A_{\mx}'(\xi))) \\
&=\Div(\chi_u A_{\mx}'(\xi)(d\phi)) + \phi \Div \Div (\chi_u A_{\mx}'(\xi))
+ \Div(\chi_u A_{\mx}'(\xi))(\nabla\phi),
\end{split}
\end{equation*}
so that we arrive at
\begin{equation}
\begin{split}
\label{kinetic-loc}
\pa_t (\phi\chi_u)+\Div \left(\phi \chi_u \mff'_{\mx}(\xi) \right)=&\Div \Div (\phi \chi_u A_{\mx}'(\xi)  )+ \phi \pa_\xi (n+m)+\chi_u  \mff'_{\mx}(\xi)(d\phi) \\ 
&-\Div \left(\chi_u A'_{\mx}(\xi)(d\phi) \right)-\Div \left(\chi_u A'_{\mx}(\xi)\right)(\nabla \phi).
\end{split}
\end{equation} 
Our goal is to analyze \eqref{kinetic-loc} in local charts by regularization. To this end, we shall employ a
standard mollifier $\rho_{\eps,\delta}\in {\mathcal D}([0,\infty)\times\R^d\times[0,\infty))$ 
of the form (below and in the sequel, in order to avoid proliferation of symbols, we shall by a slight abuse of notation denote 
convolution kernels for $t$, $\mx$ or $\xi$ by the same letter)
\begin{equation}\label{eq:defrho}
\rho_{\eps,\delta}(t,\mx,\xi)=\frac{1}{\eps^{d+1}\delta} \omega_1\Big(\frac{\xi}{\delta}\Big) 
\omega_1\Big(\frac{t}{\eps}\Big) \prod_{j=1}^d \omega_2\Big(\frac{x_j}{\eps}\Big)=\rho_\eps(t,\mx)\rho_{\delta}(\xi),
\end{equation} where $\rho_\eps(t,\mx):=\frac{1}{\eps^{d+1}}  
\omega_1(\frac{t}{\eps}) \prod_{j=1}^d \omega_2(\frac{x_j}{\eps})$ and
$\rho_{\delta}(\xi):=\frac{1}{\delta}\omega_1(\frac{\xi}{\delta})$.

Here, $\omega_1$, $\omega_2\in {\mathcal D}(\R)$ are non-negative compactly supported smooth functions with total mass one,
and $\supp(\omega_1)\subseteq (-1,0)$. 
For a distribution $F\in {\cal D}'([0,\infty)\times \R^d \times [0,\infty))$ we set
\begin{equation}
\label{reg}
F^{\eps,\delta}=F\star \rho_{\eps,\delta}, \ \ F^{\eps}=F\star \rho_{\eps},
\end{equation} where
\begin{equation}
\label{reg-def}
\begin{cases}
F\star \rho_{\eps,\delta}(t,\mx,\xi)&=\langle F, \rho_{\eps,\delta}(t-\cdot,\mx-\cdot,\xi-\cdot) \rangle\\
F\star \rho_{\eps}(t,\mx,\xi)&=\langle F, \rho_{\eps}(t-\cdot,\mx-\cdot) \rangle.
\end{cases}
\end{equation} 
For a distribution $F\in {\cal D}'([0,\infty)\times M \times \R)$ compactly supported in a single chart $(U_\alpha,\psi_\alpha)$ 
we set (suppressing the dependence on $\alpha$ notationally) 
\begin{equation}
\label{loc-reg}
F^{\eps,\delta}=((F\circ\psi_\alpha^{-1})\star \rho_{\eps,\delta})\circ \psi_\alpha,
\end{equation} 
with $F\circ\psi_\alpha^{-1}$ the pullback of $F$ under the diffeomorphism $\psi_\alpha^{-1}$.
Finally, recalling that we work with non-negative solutions, which automatically provide \eqref{positive}, we introduce the following definition.
\begin{definition} 
\label{def-kinetic} Let $\mcf$ be a set of tuples $(\chi,u_0,m,n)$, containing the following data: 
$\chi:[0,\infty)\times M \times [0,\infty) \to [0,1]$ is measurable, $\xi\mapsto \chi(t,\mx,\xi)$ is compactly supported, uniformly in $(t,\mx)$, and is non-increasing
for $\xi\in [0,\infty)$. The function $u_0:M\to [0,1]$ is measurable, and $m, n \in {\cal M}([0,\infty)\times M \times [0,\infty))$ are 
non-negative Radon measures. 
Then $\mcf$ is called {\em kinetically admissible} if it satisfies:

\noindent(i) For any $(\chi,u_0,m,n)\in \mcf$, the following Cauchy problem is satisfied: 
\begin{align}
\label{kin-eq}
\pa_t \chi+\Div (\chi\mff'_{\mx}(\xi)) &= \Div \Div (\chi A_{\mx}'(\xi))+\pa_\xi (n+m)\\
\label{kin-ic}
\chi(0,\mx,\xi)&={\rm sgn}_+(u_0(\mx)-\xi)
\end{align} for $(t,\mx,\xi) \in [0,\infty)\times M \times [0,\infty)$. 

\noindent(ii) For any two tuples $(\chi,u_0,m,n), (\tilde\chi,v_0,\tilde m,\tilde n) \in {\cal F}$, there exist 
a finite atlas $(U_\alpha,\psi_\alpha)_{\alpha=1}^k$ for $M$
and non-negative smooth functions $\phi_\alpha\in {\mathcal D}(U_\alpha)$ such that $\phi_\alpha^2$ is a partition of unity and constants $C_\alpha$ such that the following estimate holds for a.e.\ $t$: 
\begin{equation}
\label{secondorder}
\begin{split}
& \limsup_{\eps\to 0}
\limsup_{\delta\to 0}\Big[\int_{[0,t)\times M\times[0,\infty)}(\phi_\alpha \pa_{\xi}(m+ n))^{\eps,\delta}  (\phi_\alpha(1-\tilde\chi))^{\eps,\delta} - \\
& \hphantom{x}\qquad\qquad\qquad \qquad\qquad \qquad\qquad (\phi_\alpha\, \pa_{\xi}(\tilde m +\tilde n))^{\eps,\delta}  (\phi_\alpha \chi)^{\eps,\delta} dt d\mu(\mx) d\xi \\
&\qquad\qquad+\!\int_{[0,t)\times M\times[0,\infty)} (\Div \Div (\phi_\alpha \chi A'))^{\eps,\delta}\,  (\phi_\alpha(1- \tilde\chi))^{\eps,\delta} dt d\mu(\mx) d\xi  \\ 
&\qquad\qquad +\!\int_{[0,t)\times M\times[0,\infty)}(\Div \Div (\phi_\alpha (1-\tilde\chi) A'))^{\eps,\delta}\,  (\phi_\alpha \chi)^{\eps,\delta} dt d\mu(\mx) d\xi \Big] \\
& \qquad\qquad\qquad\qquad\leq   C_\alpha \int_{[0,t)\times M\times[0,\infty)}\chi (1-\tilde\chi) dt d\mu(\mx) d\xi.
\end{split}
\end{equation}

A measurable function $\chi:[0,\infty)\times M \times [0,\infty) \to [0,1]$, $\xi\mapsto \chi(t,\mx,\xi)$ that is compactly supported with respect to $\xi$ uniformly in $(t,\mx)$, and is non-increasing with respect to $\xi\in [0,\infty)$, is called an ${\cal F}$-kinetic solution if there exist measures 
$m, n\in {\cal M}([0,\infty)\times M \times [0,\infty))$ such that $(\chi,u_0,m,n) \in {\cal F}$.
\end{definition} 
\begin{remark}\label{rem5}
(i) The initial value in \eqref{kin-ic} is understood to be attained in 
the weak sense, i.e., for any test function $\varphi$ we have
\begin{equation}\label{icaslimit}
\lim_{t\to 0} \int \chi (t,x,\xi) \varphi(x,\xi)d\mu(\mx) d\xi = \int {\rm sgn}_+(u_0(\mx)-\xi) \varphi(x,\xi)\,d\mu(\mx) d\xi.
\end{equation}
(ii) Since $\phi_\alpha$ is supported in a single chart, the regularizations in \eqref{secondorder} are defined as in \eqref{loc-reg}.

\noindent(iii)  As introduced after \eqref{energy-m}, $d\mu(\mx)$ denotes the Riemannian
density associated with the metric $g$. 
\end{remark}

Our approach differs from the kinetic solution concept from \cite{CP} since here we do not a priori impose the form of the kinetic function (i.e., we do not assume that it has the form $\frac{1}{2} \left({\rm sgn}(u-\xi)+{\rm sgn}(\xi) \right)$ as in \cite{CP}; see formula (2.15) there). It is also more typical in the theory to call kinetic solution a function depending on time, space and kinetic variables satisfying some additional properties (see e.g. \cite{D, Pan_tams}). Note that we can have several kinetically admissible sets. This is natural since there may exist several stable semigroups of solutions to \eqref{main-eq}, \eqref{ic} essentially depending on the approximation that we use (see e.g. \cite{AKR, Lfl} for conservation laws).

\section{Uniqueness}\label{uniquesec}
Our first goal in this section is to derive a uniqueness result for elements of a kinetically admissible set whose initial data coincide. To prove
this we will rely on the following version of Friedrichs' Lemma, which follows as in \cite[17.1.5]{H3}:
\begin{lemma}\label{friedrichs} Let $\varphi$ be a standard mollifier 
($\varphi\in {\mathcal D}(\R^d)$, $\int \varphi=1$).
Set $\varphi_\eps(x)=\eps^{-d}\varphi(x/\eps)$ and let $1\le j \le d$.
\begin{itemize}
\item[(i)] Let $h\in C^2(\R^d)$, $k\in L^\infty(\R^d)$. Then 
$$
\pa_j(hk)\star \varphi_\eps - \partial_j(h(k\star \varphi_\eps)) \to 0 \quad (\eps\to 0)\quad \mbox{in }\ L^1_{loc}(\R^d).
$$
\item[(ii)] Let $v\in L^1(\R^d)$ be compactly supported, and let $a\in C^1(\R^d)$. Then
$$
(a\partial_j v)\star \varphi_\eps - a (\partial_j v\star \varphi_\eps) \to 0 \quad (\eps\to 0)\quad \mbox{in }\ L^1(\R^d).
$$
\end{itemize}
\end{lemma}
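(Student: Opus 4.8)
The plan is to prove both parts by the classical Friedrichs commutator computation, the key point being to arrange every integration by parts so that the derivative $\pa_j$ falls on the smooth coefficient and never on the rough factor ($k\in L^\infty$ in (i), $v\in L^1$ in (ii)). The one algebraic identity used throughout is the first--moment relation $\int_{\R^d} z_i\,(\pa_j\varphi)(z)\,dz = -\delta_{ij}$, which follows from one integration by parts together with $\int\varphi=1$.

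For part (ii): since $\pa_j v$ is a distribution and $a\in C^1$, write for each $x$
\[
(a\pa_j v)\star\varphi_\eps(x) - a(x)\,(\pa_j v\star\varphi_\eps)(x) = \langle \pa_j v,\, (a-a(x))\varphi_\eps(x-\cdot)\rangle = -\int v(y)\pa_j a(y)\varphi_\eps(x-y)\,dy + \int v(y)\,(a(y)-a(x))\,(\pa_j\varphi_\eps)(x-y)\,dy.
\]
The first integral is $(v\pa_j a)\star\varphi_\eps$, which tends to $v\pa_j a$ in $L^1(\R^d)$ by standard mollification (note $v\pa_j a\in L^1$, $v$ being compactly supported). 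In the second, substitute $y=x-\eps z$ to obtain $\int v(x-\eps z)\,\frac{a(x-\eps z)-a(x)}{\eps}\,(\pa_j\varphi)(z)\,dz$; writing $\frac{a(x-\eps z)-a(x)}{\eps} = -\nabla a(x)\cdot z + \rho_\eps(x,z)$ with $\rho_\eps(x,z)=-\int_0^1(\nabla a(x-t\eps z)-\nabla a(x))\cdot z\,dt$, the remainder $\rho_\eps\to 0$ uniformly for $x$ in the relevant compact set and $z\in\supp\varphi$ (uniform continuity of $\nabla a$), so the $\rho_\eps$--contribution vanishes in $L^1$, while the main term converges to $-\sum_i\pa_i a(x)\,v(x)\int z_i(\pa_j\varphi)(z)\,dz = v(x)\,\pa_j a(x)$ in $L^1$ by the moment identity and $L^1$--continuity of translations applied to $v$. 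The two contributions cancel.

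For part (i): rewrite $\pa_j(hk)\star\varphi_\eps = (hk)\star\pa_j\varphi_\eps$ and $\pa_j\big(h\,(k\star\varphi_\eps)\big) = (\pa_j h)(k\star\varphi_\eps) + h\,(k\star\pa_j\varphi_\eps)$, so the commutator equals $\big[(hk)\star\pa_j\varphi_\eps - h\,(k\star\pa_j\varphi_\eps)\big] - (\pa_j h)(k\star\varphi_\eps)$. The last term tends to $(\pa_j h)k$ in $L^1_{\mathrm{loc}}$ since $k\star\varphi_\eps\to k$ there and $\pa_j h$ is locally bounded. For the bracket, the substitution $y=x-\eps z$ gives $\int \frac{h(x-\eps z)-h(x)}{\eps}\,k(x-\eps z)\,(\pa_j\varphi)(z)\,dz$; the $C^2$--regularity of $h$ yields $\frac{h(x-\eps z)-h(x)}{\eps} = -\nabla h(x)\cdot z + O(\eps)$ with error bounded on compact $x$--sets, so the error term is $O(\eps)$ in $L^1_{\mathrm{loc}}$, and the main term converges, again by the moment identity and $L^1_{\mathrm{loc}}$--continuity of translations (now applied to $k$), to $-\sum_i\pa_i h(x)\,k(x)\int z_i(\pa_j\varphi)(z)\,dz = (\pa_j h)(x)k(x)$ in $L^1_{\mathrm{loc}}$. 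Hence the bracket tends to $(\pa_j h)k$ and the whole commutator to $0$ in $L^1_{\mathrm{loc}}$.

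The only genuine subtlety — and the step I expect to need the most care — is the passage to the limit once the rough factor appears: since one cannot integrate by parts onto $k$ (resp.\ $v$), the convergence $\int k(x-\eps z)\,z_i(\pa_j\varphi)(z)\,dz \to k(x)\int z_i(\pa_j\varphi)(z)\,dz$ must be extracted from $L^1_{\mathrm{loc}}$--continuity of translations together with dominated convergence in the $z$--variable (the dominating function being $\|k\|_\infty\,|z_i(\pa_j\varphi)(z)|$ on a fixed compact), and likewise for $v\in L^1$; the rest is bookkeeping with Taylor's formula and the first--moment identity for $\pa_j\varphi$. This is exactly the argument of \cite[17.1.5]{H3}, adapted to the split of regularity assumed here.
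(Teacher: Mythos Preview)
Your proof is correct and is precisely the standard Friedrichs commutator computation; the paper itself does not give a proof but simply refers to \cite[17.1.5]{H3}, which is exactly the argument you have written out in detail (and which you acknowledge at the end). Nothing further is needed.
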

\begin{theorem} \label{uniq-k} 
Assume that ${\cal F}$ is a kinetically admissible set for  \eqref{main-eq}, \eqref{ic}.  Then, for any two tuples $(\chi,u_0,m,n)$ and 
$(\tilde{\chi},v_0,\tilde m, \tilde{n})$ in $\mcf$, equality of $u_0$ and $v_0$ implies that $\chi=\tilde{\chi}$ and $\chi=\chi_u$ 
for a function $u\in L^\infty([0,\infty)\times M)$.
\end{theorem}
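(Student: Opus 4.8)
The plan is to prove $\chi = \tilde\chi$ by a doubling-of-variables style argument adapted to the kinetic setting, but organized so that the only genuinely hard estimate is exactly the one postulated in the definition of kinetic admissibility, \eqref{secondorder}. The symmetric quantity to track is
\[
Q(t) = \int_{M\times[0,\infty)} \bigl(\chi(t)(1-\tilde\chi(t)) + (1-\chi(t))\tilde\chi(t)\bigr)\,d\mu(\mx)\,d\xi,
\]
which at $t=0$ vanishes because $\chi(0,\cdot,\cdot) = \mathrm{sgn}_+(u_0-\xi) = \mathrm{sgn}_+(v_0-\xi) = \tilde\chi(0,\cdot,\cdot)$ by \eqref{kin-ic} and the hypothesis $u_0=v_0$. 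The goal is a Gronwall inequality $Q(t) \le C\int_0^t Q(s)\,ds$ (after summing over the partition of unity $\phi_\alpha^2$), which forces $Q\equiv 0$, hence $\chi=\tilde\chi$ a.e.

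\textbf{Step 1: microlocalization.} Fix the atlas $(U_\alpha,\psi_\alpha)$ and the partition of unity $\phi_\alpha^2$ provided by Definition \ref{def-kinetic}(ii). Multiply the kinetic equation \eqref{kin-eq} for $\chi$ by $\phi_\alpha$ to obtain the localized form \eqref{kinetic-loc} (with $n+m$ in place of $n_\eta+m_\eta$), and do the same for $\tilde\chi$. Pull everything back to $\R^d$ via $\psi_\alpha$ and regularize in $(t,\mx,\xi)$ with the kernel $\rho_{\eps,\delta}$ of \eqref{eq:defrho}; write $\chi^{\eps,\delta}_\alpha := (\phi_\alpha\chi)^{\eps,\delta}$ etc. Because $\mathrm{supp}(\omega_1)\subseteq(-1,0)$, the $t$- and $\xi$-mollifications are one-sided, which is what makes the boundary terms at $t=0$ and $\xi=0$ come out with the right sign.

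\textbf{Step 2: the product rule in the regularized setting.} Compute $\pa_t\bigl[\chi^{\eps,\delta}_\alpha(1-\tilde\chi^{\eps,\delta}_\alpha) + (1-\chi^{\eps,\delta}_\alpha)\tilde\chi^{\eps,\delta}_\alpha\bigr]$ using the regularized equations for $\chi^{\eps,\delta}_\alpha$ and $\tilde\chi^{\eps,\delta}_\alpha$. Integrate over $[0,t)\times\R^d\times[0,\infty)$. The first-order flux terms $\Div(\phi_\alpha\chi\,\mff'_\mx(\xi))$ and the extra commutators coming from $\chi_u\mff'_\mx(\xi)(d\phi_\alpha)$ and $\Div(\chi A'(d\phi_\alpha))$ are handled by Lemma \ref{friedrichs}: the Friedrichs commutators vanish in $L^1_{loc}$ as $\eps\to0$, and after integrating by parts the genuinely divergence-form pieces integrate to zero (on $M$ compact, with $\phi_\alpha$ compactly supported in the chart). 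The product $\chi_u\tilde\chi_v$-type terms involving only first-order operators, once the commutators are removed, cancel pairwise by the usual antisymmetry of the doubled-variable computation, up to the bounded term $C_\alpha\int\chi(1-\tilde\chi)$ arising from lower-order coefficients $\Gamma^j_{kj}\mff'^k$, $\mff'(d\phi_\alpha)$, which are continuous, hence bounded on the compact chart closure.

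\textbf{Step 3: the second-order and measure terms.} The remaining contributions are precisely the $\Div\Div(\phi_\alpha\chi A')$, $\Div\Div(\phi_\alpha(1-\tilde\chi)A')$ and $\phi_\alpha\pa_\xi(m+n)$, $\phi_\alpha\pa_\xi(\tilde m+\tilde n)$ terms paired against $(\phi_\alpha(1-\tilde\chi))^{\eps,\delta}$ and $(\phi_\alpha\chi)^{\eps,\delta}$ respectively. Taking $\limsup_{\eps\to0}\limsup_{\delta\to0}$, the admissibility estimate \eqref{secondorder} bounds exactly this bracket by $C_\alpha\int_{[0,t)}\chi(1-\tilde\chi)\,dt\,d\mu\,d\xi$. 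The $\pa_\xi$ of the measures produces, on the regularized level, boundary contributions at $\xi=0$; because the one-sided kernel $\rho_\delta$ is supported in $(-\delta,0)$ and the measures $m,n$ live on $\xi\ge0$, these boundary terms have a favorable sign and may be discarded (as in \cite{CP,CK}), while the interior part is absorbed by \eqref{secondorder}. Summing over $\alpha$ and using $\sum_\alpha\phi_\alpha^2=1$ converts the localized inequalities into a single estimate $Q(t)\le Q(0) + C\int_0^t Q(s)\,ds = C\int_0^t Q(s)\,ds$; Gronwall gives $Q\equiv0$, so $\chi=\tilde\chi$ a.e.

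\textbf{Step 4: from $\chi$ to $\chi_u$.} It remains to show that a single element $\chi$ of $\mcf$ has the form $\chi_u$. Apply the uniqueness just proved with $\tilde\chi:=$ the value at time $t$ ``frozen and re-evolved,'' or more directly: by Definition \ref{def-kinetic} the map $\xi\mapsto\chi(t,\mx,\xi)$ is non-increasing on $[0,\infty)$, takes values in $[0,1]$, equals $1$ near $\xi=0$ for the initial datum and is compactly supported; the uniqueness argument applied to $\chi$ and the candidate $\mathrm{sgn}_+(u(t,\mx)-\xi)$ with $u(t,\mx):=\int_0^\infty\chi(t,\mx,\xi)\,d\xi$ — which shares the same initial data and also lies in $\mcf$ once one checks it satisfies \eqref{kin-eq} with appropriately chosen defect measures — forces $\chi(t,\mx,\xi)\in\{0,1\}$ for a.e.\ $(t,\mx,\xi)$, and a non-increasing $\{0,1\}$-valued function of $\xi$ is exactly the indicator $\mathrm{sgn}_+(u(t,\mx)-\xi)$. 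Boundedness $0\le u\le1$ follows from $\chi$ being supported in $\xi\in[0,1]$.

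\textbf{Main obstacle.} The delicate point is Step 2--3: justifying that all the first-order and commutator terms really do vanish or become lower-order after the double regularization and the iterated limits $\delta\to0$ then $\eps\to0$, in the correct order, and that the sign of the $\xi=0$ and $t=0$ boundary terms produced by the one-sided mollifiers is the one we need (non-positive on the side we discard). This is where the choice $\mathrm{supp}(\omega_1)\subseteq(-1,0)$ is essential and where the Friedrichs-type Lemma \ref{friedrichs} must be invoked with care, since $\chi$ is merely $L^\infty$ while the coefficients $\mff'$, $A'$ are only $C^1$/$C^2$; the second-order term $\Div\Div(\phi_\alpha\chi A')$ is not covered by Lemma \ref{friedrichs} directly and is precisely why it had to be built into the definition of kinetic admissibility via \eqref{secondorder} rather than proved here.
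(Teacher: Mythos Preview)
Your Steps 1--3 are essentially the paper's argument, only organized around the symmetric quantity $Q(t)=\int \chi(1-\tilde\chi)+(1-\chi)\tilde\chi$ rather than the one-sided $\int \chi(1-\tilde\chi)$. The paper works with the one-sided quantity, obtains $\chi(1-\tilde\chi)=0$ a.e.\ from Gronwall, and then simply says ``interchange the roles of $\chi$ and $\tilde\chi$'' to get $\tilde\chi(1-\chi)=0$ as well. Your symmetric $Q$ achieves both in one pass (at the cost of invoking \eqref{secondorder} twice, once for each ordering), which is fine.

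The genuine problem is Step 4. You propose to show $\chi\in\{0,1\}$ by applying the uniqueness result to $\chi$ and the candidate $\mathrm{sgn}_+(u-\xi)$ with $u:=\int_0^\infty\chi\,d\xi$, ``once one checks it satisfies \eqref{kin-eq} with appropriately chosen defect measures.'' That check is not available: you have no a priori reason to believe $\mathrm{sgn}_+(u-\xi)$ lies in $\mcf$, and constructing suitable defect measures for it would essentially require already knowing $\chi=\mathrm{sgn}_+(u-\xi)$. This is circular.

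The fix is elementary and already implicit in what you proved. From $Q\equiv 0$ and nonnegativity of each summand you get \emph{both} $\chi(1-\tilde\chi)=0$ and $(1-\chi)\tilde\chi=0$ a.e. A pointwise case analysis then forces $\chi,\tilde\chi\in\{0,1\}$: if $0<\chi(t,\mx,\xi)<1$ at some point, the first identity gives $\tilde\chi=1$ there while the second gives $\tilde\chi=0$, a contradiction. Once $\chi$ is $\{0,1\}$-valued and non-increasing in $\xi$ with uniformly compact support, it is automatically $\mathrm{sgn}_+(u-\xi)$ with $u(t,\mx)=\int_0^\infty\chi(t,\mx,\xi)\,d\xi\in L^\infty$. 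This is exactly how the paper concludes, and it requires no further appeal to $\mcf$.
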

\begin{proof}
Assume that for the initial value $u_0$ with
$$
0 \leq u_0 \leq 1
$$ 
we have two tuples $(\chi, u_0, m, n)$ and $(\tilde{\chi}, u_0, \tilde m, \tilde{n})$ in $\mcf$.
Note that according to the geometry compatibility condition \eqref{geomcomp} we have 
\begin{align}
\label{kin-eq-1}
&\pa_t (1-\tilde{\chi}) +\Div ((1-\tilde{\chi})\mff'_{\mx}(\xi)) = 
\Div \Div ((1-\tilde{\chi}) A_{\mx}'(\xi)) -\pa_\xi (\tilde{n}+\tilde{m}).
\end{align} 
Now let $(\psi_\alpha,U_\alpha)_{\alpha=1}^k$ be a covering of $M$ by charts 
as in Def.\ \ref{def-kinetic}, with corresponding functions $\phi_\alpha$ ($\supp \phi_\alpha \Subset U_\alpha, \ 
\sum_{\alpha=1}^k \phi_\alpha^2 = 1$). Fixing $\alpha$,
we rewrite \eqref{kin-eq-1} in localized form for $\phi\equiv\phi_\alpha$. Then 
from \eqref{kinetic-loc} we obtain 
\begin{equation}
\begin{split}
\label{kinetic-loc-new}
\pa_t (\phi\chi)= &-\Div \left(\phi \chi \mff'_{\mx}(\xi) \right) + 
\Div \Div (\phi \chi A_{\mx}'(\xi))+
\phi \pa_\xi (n+m)+\chi  \mff'_{\mx}(\xi)(d\phi) \\ 
&-\Div \left(\chi A'_{\mx}(\xi)(d\phi) \right)-\Div \left(\chi A'_{\mx}(\xi)\right)(\nabla \phi),
\end{split}
\end{equation} and starting from \eqref{kin-eq-1} instead of \eqref{kinetic}, 
the proof of \eqref{kinetic-loc} shows that  
\begin{equation}  \label{kin-eq-1-loc} 
\begin{split}
\pa_t (\phi(1-\tilde{\chi})) =&-\Div \left(\phi (1-\tilde{\chi}) \mff'_{\mx}(\xi) \right)+\Div \Div (\phi (1-\tilde{\chi}) A_{\mx}'(\xi) ) \\
&  -\phi \pa_\xi (\tilde n+ \tilde m) 
+(1-\tilde{\chi})  \mff'_{\mx}(\xi)(d\phi)-\Div \left((1-\tilde{\chi}) A'_{\mx}(\xi)(d\phi) \right)\\
&-\Div \left((1-\tilde{\chi}) A'_{\mx}(\xi)\right)(\nabla \phi). 
\end{split}
\end{equation} 
Note that all terms in both \eqref{kinetic-loc-new} and \eqref{kin-eq-1-loc} are supported in a 
single chart $(\psi_\alpha,U_\alpha)$, so using push-forward under the chart map 
$\psi_\alpha$, we obtain an equivalent system of equations, this time on $\psi_\alpha(U_\alpha)
\subseteq \R^d$, 
and all differential operators occuring in \eqref{kinetic-loc-new} and \eqref{kin-eq-1-loc}
are transformed into the corresponding ones on $\R^d$ with respect to the push-forward
metric $(\psi_\alpha)_*g$. Moreover, all tensors and functions involved have compact support 
within $\psi_\alpha(U_\alpha)$, hence can be extended by $0$ to all of $\R^d$. 
Altogether, this means that we may assume, without loss of generality, that $M=\R^d$ and $g=g_{ij}$ is a Riemannian metric on $\R^d$.   

Now we convolve equations \eqref{kinetic-loc-new} and 
\eqref{kin-eq-1-loc} by $\rho_{\eps,\delta}$ and multiply them by $ (\phi (1-\tilde{\chi}))^{\eps,\delta}$ and $ (\phi \chi)^{\eps,\delta}$, 
respectively. Next we sum the equations so obtained and integrate over $[0,t)\times \R^{d+1}$. Then we find that the left hand side, 
$$
2\int_0^t \pa_t  \int_{\R^{d} }\int_0^\infty(\phi \chi)^{\eps,\delta} (\phi(1-\tilde{\chi}))^{\eps,\delta} d\xi d\mu(\mx) dt,
$$
can be written as a sum of six terms, which we treat separately. 
For the limiting behavior of the first term, we obtain 
\begin{equation}
\label{term1}
\begin{split}
\lim_{\eps\to 0}\lim_{\delta \to 0} \bigg(  &\int_{[0,t)\times \R^{d} \times[0,\infty)}- (\Div (\phi (1-\tilde{\chi}) \mff'_{\mx}(\xi)))^{\eps,\delta} 
 (\phi \chi)^{\eps,\delta}   \\ 
&-(\Div (\phi \chi\mff'_{\mx}(\xi)))^{\eps,\delta}  (\phi(1-\tilde{\chi}))^{\eps,\delta}\,dt d\xi d\mu(\mx) \bigg) \\
& \hphantom{xx}  = -\int_{[0,t)\times \R^{d} \times [0,\infty) }\Div (\mff'_{\mx}(\xi))\phi^2(1-\tilde{\chi})\chi\, dt d\xi d\mu(\mx) \\
& \hphantom{xx} \leq C \int_{[0,t)\times \R^{d} \times [0,\infty)} (1-\tilde{\chi})\chi \, dt d\xi d\mu(\mx),
\end{split}
\end{equation}
where we used the product rule and Lemma \ref{friedrichs} (ii) on one, and integration by parts on the other term. Similarly, for some bounded function $G$,
\begin{equation}
\label{term2}
\begin{split}
\lim_{\eps \to 0}\lim_{\delta \to 0} & \bigg( \int_{[0,t)\times \R^{d} \times [0,\infty)} -\Div \left((1-\tilde{\chi})  A'(d\phi) \right)^{\eps,\delta}(\phi \chi)^{\eps,\delta}\\
&\qquad\qquad\qquad\qquad-\Div \left(\chi A'(d\phi) \right)^{\eps,\delta}  (\phi (1-\tilde{\chi}))^{\eps,\delta} dt d\xi d\mu(\mx) \bigg) \\ 
= \lim_{\eps \to 0}\lim_{\delta \to 0} &\bigg(\int_{[0,t)\times \R^{d} \times [0,\infty)} -g^{ik} \pa_i \left((1-\tilde{\chi}) (A')^j_k \pa_j\phi \right)^{\eps,\delta}(\phi \chi)^{\eps,\delta} \hfill
\\ &\qquad\qquad \qquad \qquad -g^{ik} \pa_i \left(\chi (A')^j_k \pa_j\phi \right)^{\eps,\delta}(\phi(1-\tilde{\chi}))^{\eps,\delta} dt d\xi d\mu(\mx) \bigg) \\
&\qquad\qquad\qquad\qquad\qquad+\int_{[0,t)\times \R^{d} \times [0,\infty) }  G(\mx,\xi) (1-\tilde{\chi})\chi dt d\xi d\mu(\mx)\\
&\leq C \int_{[0,t)\times \R^{d} \times [0,\infty) } (1-\tilde{\chi})\chi dt d\xi d\mu(\mx),
\end{split}
\end{equation} 
and so on for all other terms except the ones involving $\Div \Div (\phi (1-\tilde{\chi})A_{\mx}'(\xi))-\phi \pa_\xi (\tilde n+ \tilde m)$ where we 
directly use \eqref{secondorder} to get the desired estimate. 

Thus we arrive at
\begin{equation}
\label{g-0}
\begin{split}
\lim_{\eps \to 0}\lim_{\delta \to 0} \bigg( \int_{\R^{d} }\int_0^\infty(\phi \chi)^{\eps,\delta} & (\phi(1-\tilde{\chi}))^{\eps,\delta} d\xi d\mu(\mx)\\
& -\Big( \int_{\R^{d} }\int_0^\infty(\phi \chi)^{\eps,\delta} (\phi(1-\tilde{\chi}))^{\eps,\delta} d\xi d\mu(\mx)\Big) \bigg|_{t=0} \bigg) \\ 
& \qquad\qquad \leq  C \int_{\R^{d}} \int_0^\infty \chi (1-\tilde{\chi}) d\xi d\mu(\mx).
\end{split}
\end{equation}  
The initial condition \eqref{icaslimit} implies that
$$
\lim_{\eps \to 0}\lim_{\delta \to 0} \bigg( \Big( \int_{\R^{d} } \int_0^\infty (\phi \chi)^{\eps,\delta} (\phi(1-\tilde{\chi}))^{\eps,\delta} d\xi d\mu(\mx)\Big) \bigg|_{t=0} \bigg) =0.
$$ 
Thus, rewriting \eqref{g-0} as an expression on $M$ results in
\begin{align}
\label{g-1}
\int_{M\times [0,\infty) }\phi_\alpha^2 \chi (1-\tilde{\chi}) d\xi d\mu(\mx)  \leq \tilde{C}_\alpha \int_0^t\int_{M\times [0,\infty)} \chi (1-\tilde{\chi}) d\xi d\mu(\mx).
\end{align}  
Now summing over $\alpha=1,\dots,k$ and setting $C:=\sum_{\alpha=1}^k\tilde{C}_\alpha <\infty$ gives
\begin{align}
\label{g-2}
\int_{M\times [0,\infty)}\chi (1-\tilde{\chi}) d\xi d\mu(\mx)  \leq C \int_0^t\int_{M\times [0,\infty)} \chi (1-\tilde{\chi}) d\xi d\mu(\mx).
\end{align}

From here, according to the Gronwall inequality, we conclude that
\begin{align*}
\chi(t,\mx,\xi)\, (1-\tilde{\chi}(t,\mx,\xi))=0
\end{align*}
for almost every $(t,\mx,\xi)\in [0,\infty)\times M \times \R$.

This implies that either $\chi(t,\mx,\xi)=0$ or $\tilde{\chi}(t,\mx,\xi)=1$.  Since we can interchange the roles of $\chi$ and $\tilde{\chi} $, we conclude that $1$ and $0$ are actually the only values that $\chi$ or $\tilde{\chi} $ can attain and that $\chi=\tilde{\chi}$. Since $\chi$ is also non-increasing with respect to $\xi$ on $[0,\infty)$, 
we conclude that there exists a function $u:[0,\infty)\times M \to \R$ such that 
\begin{equation}
\label{***} 
\chi(t,\mx,\xi)={\rm sgn}_+(u(t,\mx)-\xi).
\end{equation} 
In fact, this function is given by $u(t,\mx)= \int_0^\infty \chi(t,\mx,\xi)\, d\xi$. Note that this in particular shows that if $ \chi=\tilde{\chi}$ a.e.\ then
$$
u=\tilde{u}
$$
almost everywhere. 
\end{proof} 
Notice that from the proof of the previous theorem we see that every $\chi$ appearing in any tuple from ${\cal F}$ has the form ${\rm sgn}_+(u(t,\mx)-\xi)$ where the function $u$ satisfies \eqref{entr-ineq}.  

\section{Existence}\label{existencesec}

Our next aim is to prove that given initial data $u_0: M\to [0,1]$, 
 there exists a kinetic function $\chi$ and corresponding measures $m,n$ such that the conditions from Definition \ref{def-kinetic} are satisfied. To this end, consider the vanishing viscosity approximation \eqref{VV} augmented with the initial conditions \eqref{ic}. We have the following theorem.
 
\begin{theorem}
\label{t1} 
For any $\eta >0$ the initial value problem \eqref{VV}, \eqref{ic} has a unique solution $u_\eta $ in $H^{1,2}((0,\infty)$ $\times M)$ $\cap L^\infty((0,\infty)\times M)$. 
This solution satisfies, for any convex $S\in C^2(\R)$ with $S(0)=0$,

\begin{equation}
\label{app-ec}
\begin{split}
&\pa_t S(u_\eta)+\Div \int_0^{u_\eta(t,\mx)} \mff'_{\mx}(\xi)S'(\xi) d\xi = \Div \Div (\int_0^{u_\eta(t,\mx)} A_{\mx}'(\xi)S'(\xi) d\xi)\\
&-S''(u_\eta) \left|\Div(\beta(\mx,u_\eta(t,\mx)))-\Div(\beta(\mx,\zeta))\Big|_{\zeta=u_\eta(t,\mx)} \right|_g^2 \\ &+\eta \Delta S(u_\eta)-\eta S''(u_\eta)|\nabla u_\eta|^2.
\end{split}
\end{equation}

\end{theorem}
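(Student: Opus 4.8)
The plan is to prove existence of the vanishing viscosity approximation by appealing to classical parabolic theory, and then to derive the entropy-type identity \eqref{app-ec} by combining Theorem \ref{thm1}, the chain rule \eqref{adudu}, and the standard identity $\Delta(S(w)) = S''(w)|\nabla w|^2 + S'(w)\Delta w$, exactly as in the computation leading to \eqref{entr-ineq-prep}.

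\textbf{Existence and uniqueness of $u_\eta$.} First I would observe that \eqref{VV} is a quasilinear strictly parabolic equation once we expand the right-hand side using \eqref{divdiv11} and \eqref{tensor1k}: indeed, the only genuine second-order term in the $\mx$-variable coming from $\Div\Div(A_\mx(u_\eta))$ is $g^{ij}(\mx)\,{A'}_i^{\,r}(\mx,u_\eta)\,\pa_r\pa_j u_\eta$ (all lower-order contributions being first order in $u_\eta$ or involving only $\nabla u_\eta$ quadratically), and together with $\eta\Delta u_\eta$ this gives a principal symbol $\big(\eta g^{ij} + g^{ij}{A'}_i^{\,r}\xi_r\xi_j\big)$ — wait, more carefully, the principal part acting on $u_\eta$ is $\big(\eta g^{rj} + g^{ij}{A'}_i^{\,r}(\mx,u_\eta)\big)\pa_r\pa_j u_\eta$, which is uniformly elliptic for $\eta>0$ because $A'$ is positive semi-definite (by \eqref{Apd}) and $\eta g$ is positive definite. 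Working in a finite atlas and using a partition of unity, I would invoke \cite[Section V]{LSU} (or the analogous result for systems of charts patched together, using compactness of $M$) to obtain a unique solution in $H^{1,2}((0,\infty)\times M)\cap L^\infty$. The $L^\infty$ bound $0\le u_\eta\le 1$ follows from a comparison/maximum principle argument: $0$ and $1$ are (sub/super)solutions since $\mff_\mx(\xi)$, $A_\mx(\xi)$ enter only through $\mff'_\mx$, $A'_\mx$ multiplied by derivatives of $u_\eta$, so constants are annihilated by the whole right-hand side; hence $u_0\in[0,1]$ is propagated. This is where I expect the main technical obstacle to lie: carefully reducing the manifold problem to the flat parabolic theory of \cite{LSU} while keeping track of the $L^\infty$ bounds and the fact that \eqref{VV} is not in the exact divergence form those references assume. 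In practice one would either cite an intrinsic parabolic existence theory on compact manifolds or spell out the localization.

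\textbf{Derivation of \eqref{app-ec}.} Once $u_\eta\in H^{1,2}\cap L^\infty$ solves \eqref{VV}, Theorem \ref{thm1} applies verbatim with $u$ replaced by $u_\eta$ and with the extra term $\eta\Delta u_\eta$ carried along on the right. Concretely, multiplying \eqref{VV} formally by $S'(u_\eta)$ and using the algebraic identities established in the proof of Theorem \ref{thm1} (which only use the chain rule for $H^1$ functions and the local coordinate formulae \eqref{tensor1k}, \eqref{tensor01}) yields
\begin{equation*}
\pa_t S(u_\eta)+\Div\!\int_0^{u_\eta}\!\mff'_\mx(\xi)S'(\xi)\,d\xi = \Div\Div\!\Big(\!\int_0^{u_\eta}\!A'_\mx(\xi)S'(\xi)\,d\xi\Big) - S''(u_\eta)\langle A'_\mx(u_\eta)\nabla u_\eta,\nabla u_\eta\rangle + \eta S'(u_\eta)\Delta u_\eta,
\end{equation*}
which is precisely \eqref{entr-ineq-prep}. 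Then I would rewrite $\langle A'_\mx(u_\eta)\nabla u_\eta,\nabla u_\eta\rangle$ using \eqref{adudu} as the squared $g$-norm of $\Div(\beta(\mx,u_\eta))-\Div(\beta(\mx,\zeta))|_{\zeta=u_\eta}$, and rewrite $\eta S'(u_\eta)\Delta u_\eta = \eta\Delta S(u_\eta) - \eta S''(u_\eta)|\nabla u_\eta|^2$ via the Laplace--Beltrami chain rule. Substituting both gives exactly \eqref{app-ec}. The convexity hypothesis $S''\ge 0$ is not needed for the identity itself but is recorded because it is what makes the two negative terms on the right genuinely dissipative, which is what one uses afterwards; one should remark that all manipulations are justified because $u_\eta\in H^{1,2}$ guarantees $\nabla u_\eta\in L^2$ and $S\in C^2$ with $u_\eta$ bounded guarantees $S'(u_\eta),S''(u_\eta)\in L^\infty$, so every product appearing is at least a well-defined distribution and the identities hold in $\mathcal D'((0,\infty)\times M)$.
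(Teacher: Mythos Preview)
Your derivation of \eqref{app-ec} is correct and matches the paper exactly: apply Theorem \ref{thm1} to the equation with the extra $\eta\Delta u_\eta$ term, rewrite $\langle A'_\mx(u_\eta)\nabla u_\eta,\nabla u_\eta\rangle$ via \eqref{adudu}, and use $\Delta(S(w))=S''(w)|\nabla w|^2+S'(w)\Delta w$. Your existence argument via local charts and \cite{LSU} is also the same as the paper's.

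There is, however, a genuine error in your justification of the $L^\infty$ bound. You claim that constants are annihilated because ``$\mff_\mx(\xi)$, $A_\mx(\xi)$ enter only through $\mff'_\mx$, $A'_\mx$ multiplied by derivatives of $u_\eta$''. This is false in the heterogeneous setting: for a constant $u_\eta\equiv c$, the term $\Div \mff_\mx(c)$ is the divergence of the vector field $\mx\mapsto \mff_\mx(c)$, which is generically nonzero (see \eqref{div-g}), and likewise $\Div\Div(A_\mx(c))$ need not vanish. The correct reason that the constants $0$ and $1$ are solutions of \eqref{VV} is precisely the geometry compatibility condition \eqref{geomcomp}, which asserts $\Div \mff_\mx(\xi)=\Div\Div(A_\mx(\xi))$ for every fixed $\xi$; plugging in a constant then makes the two nonzero terms cancel. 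Once this is fixed, the maximum principle argument goes through (the paper cites \cite{arena}).
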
 
\begin{proof}
Existence follows from \cite[Th.\ 15 and Rem.\ 16]{GKM}, while \eqref{app-ec} is a direct consequence of \eqref{entr-ineq-prep}. 
\end{proof}
 
We now want to prove that for such solutions $u_\eta$, the corresponding $\chi_{u_\eta}$, $n_\eta$ and $m_\eta$ defined through \eqref{entr-ineq-prep}, \eqref{n-eta}, and \eqref{m-eta}, converge to the function $\chi_u$ and measures $m$ and $n$ such that the set of all such limits $(\chi_u,u_0,n,m)$ is a kinetically admissible set in the sense of Definition \ref{def-kinetic}. Before we show convergence we will establish that there exist convergent subsequences such that their limits satisfy \eqref{secondorder} from Definition \ref{def-kinetic}. 
\begin{lemma} \label{subseqconverge} Let $u_\eta$ be a solution to \eqref{VV} with the initial data $u_\eta|_{t=0}=u_0$ and measures $n_\eta, m_\eta $. Then there exists a subsequence $\eta_n$ along which $\chi_{u_{\eta_n}}$ converges (in the weak-$\star $ topology) to some $\chi_u \in L^\infty([0,\infty)\times M 
\times [0,\infty))$ and such that the corresponding measures $n_{\eta_n}$ and $m_{\eta_n}$ converge weakly to Radon measures $n_u,m_u$. Furthermore such limits satisfy \eqref{kin-eq}, \eqref{kin-ic}.
\end{lemma}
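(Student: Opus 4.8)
The plan is to extract convergent subsequences from the family $(u_\eta)_{\eta>0}$ and its associated kinetic data, and then pass to the limit in the regularized kinetic equation \eqref{kinetic}. First I would establish the requisite uniform bounds. The functions $u_\eta$ satisfy $0\le u_\eta\le 1$ by Theorem \ref{t1}, so the kinetic functions $\chi_{u_\eta}(t,\mx,\xi)={\rm sgn}_+(u_\eta(t,\mx)-\xi)$ are uniformly bounded in $L^\infty([0,\infty)\times M\times[0,\infty))$ and are supported in $\xi\in[0,1]$, uniformly in $(t,\mx)$. By the Banach--Alaoglu theorem there is a subsequence $\eta_n\to 0$ along which $\chi_{u_{\eta_n}}\dscon \chi_u$ in the weak-$\star$ topology of $L^\infty$, and the limit inherits the uniform compact support in $\xi$ and (being a weak-$\star$ limit of functions $\xi\mapsto {\rm sgn}_+(u_{\eta_n}-\xi)$ that are non-increasing on $[0,\infty)$) the monotonicity in $\xi$. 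For the measures: the energy identity \eqref{energy-m} gives
\begin{equation*}
\int_{\R^+\times M\times\R^+}(n_\eta+m_\eta)\,d\xi\,d\mu(\mx)\,dt=\tfrac12\int_M |u_0|^2\,d\mu(\mx),
\end{equation*}
so $(n_\eta)$ and $(m_\eta)$ are uniformly bounded in total mass on $[0,\infty)\times M\times[0,\infty)$. A standard argument (restrict to $[0,T]\times M\times[0,R]$ for $R$ large enough to capture the uniform support, use that this is a compact metric space, apply Banach--Alaoglu / the Riesz representation theorem, and diagonalize over $T\to\infty$) produces a further subsequence, still written $\eta_n$, along which $n_{\eta_n}\dscon n_u$ and $m_{\eta_n}\dscon m_u$ weakly as Radon measures, with $n_u,m_u\ge 0$ and supported in $[0,\infty)\times M\times[0,\infty)$.

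Next I would verify that $(\chi_u,u_0,m_u,n_u)$ satisfies \eqref{kin-eq}, \eqref{kin-ic}. By Theorem \ref{t1} and \eqref{entr-ineq-prep}, each $u_{\eta_n}$ satisfies, in $\mathcal D'$, the approximate kinetic equation
\begin{equation*}
\pa_t \chi_{u_{\eta_n}} +\Div (\chi_{u_{\eta_n}}\mff'_{\mx}(\xi)) = \Div \Div (\chi_{u_{\eta_n}} A_{\mx}'(\xi))+\pa_\xi (n_{\eta_n}+m_{\eta_n}) + \eta_n\,\pa_\xi\!\big(\delta(\xi-u_{\eta_n})\big)\Delta S\text{-type term},
\end{equation*}
more precisely the kinetic form of \eqref{app-ec}: testing \eqref{app-ec} against $S'$ supported in $(0,\infty)$ and using \eqref{integralofchi} gives $\pa_t\chi_{u_{\eta_n}}+\Div(\chi_{u_{\eta_n}}\mff'_\mx(\xi))=\Div\Div(\chi_{u_{\eta_n}}A'_\mx(\xi))+\pa_\xi(n_{\eta_n}+m_{\eta_n})+\eta_n\Delta\chi_{u_{\eta_n}}$ in the sense of distributions on $[0,\infty)\times M\times(0,\infty)$ (the $\eta$-Laplacian term coming from $\eta\Delta S(u_\eta)$). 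Now pair this against an arbitrary test function $\varphi\in\mathcal D([0,\infty)\times M\times(0,\infty))$. Each term is linear in $\chi_{u_{\eta_n}}$, $n_{\eta_n}$, $m_{\eta_n}$ composed with a fixed smooth (coefficient-dependent) differential operator, so weak-$\star$ convergence of $\chi_{u_{\eta_n}}$ and weak convergence of the measures pass to the limit termwise; the coefficients $\mff'_\mx(\xi)$, $A'_\mx(\xi)$ and the Christoffel symbols appearing in $\Div$ and $\Div\Div$ are smooth and, after the $\varphi$-localization, may be taken compactly supported, so no difficulty arises there. The viscous term $\eta_n\langle \Delta\varphi,\chi_{u_{\eta_n}}\rangle$ vanishes as $\eta_n\to0$ since $\chi_{u_{\eta_n}}$ is uniformly bounded and $\Delta\varphi$ is a fixed integrable function. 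This yields \eqref{kin-eq} in $\mathcal D'([0,\infty)\times M\times(0,\infty))$, hence on $[0,\infty)\times M\times[0,\infty)$ since all measures and functions are supported in $\xi\ge0$. For the initial condition \eqref{kin-ic}: from the equation, $\pa_t\chi_{u_{\eta_n}}$ is bounded in a negative Sobolev/measure space uniformly in $n$, so $t\mapsto \int\chi_{u_{\eta_n}}(t,\cdot,\cdot)\varphi$ is equicontinuous; combined with $\chi_{u_{\eta_n}}(0,\cdot,\cdot)={\rm sgn}_+(u_0-\xi)$ (which is independent of $n$) this gives \eqref{icaslimit} for the limit $\chi_u$ in the sense of Remark \ref{rem5}(i). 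Alternatively one integrates the equation in $t$ against $\varphi(\mx,\xi)\theta_\tau(t)$ with $\theta_\tau$ approximating $\mathbf 1_{[0,\tau]}$ and lets $\tau\to0$.

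The main obstacle, and the point requiring the most care, is the measure-compactness step: one must check that the uniform total-mass bound from \eqref{energy-m} genuinely yields weakly convergent \emph{Radon} measures on the (non-compact in $t$) space $[0,\infty)\times M\times[0,\infty)$, not merely on compact subsets, and that the limit is not lost "at infinity" in $\xi$ — this is precisely where the uniform-in-$(t,\mx)$ compact support of $\chi_{u_{\eta_n}}$ in $\xi\in[0,1]$, together with the fact (visible from \eqref{n-eta}, \eqref{m-eta}) that $n_\eta$ and $m_\eta$ are supported where $\xi=u_\eta\in[0,1]$, is used. A secondary subtlety is that the kinetic form of \eqref{app-ec} must be justified for the \emph{non-convex} $C^2$ entropies needed to produce the genuine kinetic equation; this is handled by noting \eqref{app-ec} is an identity (not an inequality) for the smooth solution $u_{\eta_n}$ of the strictly parabolic problem \eqref{VV}, valid for every $S\in C^2(\R)$ with $S(0)=0$ by Theorem \ref{thm1}, so the restriction to convex $S$ in the statement of Theorem \ref{t1} is irrelevant here. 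Everything else — passing smooth variable-coefficient differential operators through weak limits, vanishing of the $O(\eta_n)$ term — is routine.
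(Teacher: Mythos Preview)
Your proposal is correct and follows essentially the same approach as the paper: the paper likewise invokes the energy bound \eqref{energy-m} to get boundedness of the measures and of $\chi_{u_\eta}$ in $L^\infty$, extracts common weakly convergent subsequences, passes to the limit in the kinetic form of \eqref{app-ec} (observing that $\eta\Delta S(u_\eta)\to 0$), and handles the initial condition by testing against kink functions $f_j$ approximating ${\rm sgn}_+(T-t)$ to show $T\mapsto\int\chi(T,\cdot,\cdot)\varphi$ is (a.e.\ equal to) a continuous function. Your version is more detailed in spelling out the support-in-$\xi$ issue and the irrelevance of convexity of $S$, but these are elaborations of exactly the same argument.
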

\begin{proof}According to \eqref{energy-m}, we see that the sets $\{ n_{\eta} \}_{\eta>0}$ and $\{m_{\eta}\} _{\eta>0}$ are bounded in the space of Radon measures ${\cal M}([0,\infty)\times M\times [0,\infty))$. Also the $\chi_{u_\eta}$ are bounded between zero and one. Thus, we can find common weakly converging subsequences (see \cite[Theorem 1.1.2 and 1.1.4]{Evans}).
Equation \eqref{kin-eq} follows from rewriting \eqref{app-ec} in terms of $\chi_{u_\eta}$ (see \eqref{integralofchi} onwards) and letting $\eta \to 0$ (note that $\eta \Delta S(u_\eta)\to 0$). Now multiplying \eqref{kin-eq} by kink functions $f_j$ converging to ${\rm sgn}_+(T-t)$ and a test function $\varphi(\mx,\xi)$, integrating over all variables and letting first $n\to \infty$ and then $j\to \infty$ shows that the function $T\mapsto \int \chi (T,x,\xi) \varphi(x,\xi)d\mu(\mx) d\xi$ appearing in \eqref{icaslimit} is almost everywhere equal to a continuous function in $T$. This gives the initial condition \eqref{kin-ic}.  \end{proof} 

We will now show that the set of all limits of such subsequences satisfies the conditions of Definition \ref{def-kinetic}. To this end, let us first prove the following lemma. Since \eqref{secondorder} only deals with expressions of the form $\phi_\alpha \chi_u$ where $\phi_\alpha$ is compactly supported in a chart domain we may assume $M=\R^d$. Let us put 
\begin{equation}\label{bartilde}
a=A'_\mx, \ \ \bar{\chi}_v=\phi_\alpha(1-\chi_v), \ \ \tilde{\chi}_u=\phi_\alpha \chi_u.
\end{equation} 
Notice that for every fixed $\eps,\delta>0$ we have for every $(t,\mx,\xi)$ along the previously chosen subsequence 
\begin{equation}
\label{ae-conv}
\lim\limits_{n\to \infty} \chi_{u_n}^{\eps,\delta}(t,\mx,\xi)=\chi_{u}^{\eps,\delta}(t,\mx,\xi).
\end{equation} The same holds for $\bar{\chi}_{v_n}^{\eps,\delta}$ and $\tilde{\chi}_{u_n}^{\eps,\delta}$, as well as all their (partial) derivatives.
  
 Since $g$ is symmetric and positive definite there exists a symmetric square root (depending smoothly on the point) which we will denote by $h$, i.e., 
\[
g^{ij}=\delta_{kl}h^{il}h^{jk}
\] 
(where $\delta_{lk}$ is the Kronecker-Delta). 
\begin{lemma}
\label{lemma1}
 There exists a bounded function $G$ (depending on the metric, $a$ and $\phi_\alpha$, but not on $\eps,\delta$, or $n$) defined on 
$[0,\infty)\times \R^d \times [0,\infty)$
such that 
\begin{equation}\label{est-L1}
\begin{split}
&\!\!\!\!\!\!
\int_{\mathbb{R}^{d}} \int_0^\infty \Div\Div(\bar{\chi}_{v_n}a)\star\rho_{\eps,\delta}\tilde{\chi}_{u_n}^{\eps,\delta}d\mu 
d\xi+\int_{\mathbb{R}^{d}} \int_0^\infty \Div\Div(\tilde{\chi}_{u_n} a)\star\rho_{\eps,\delta}\bar{\chi}_{v_n}^{\eps,\delta} d\mu d\xi\\ 
&\!\!\!\!
-2\int_{\mathbb{R}^{3d}} \int_{(\mathbb{R}^+)^3}  \delta_{lm} h^{mi}(\my)h^{rl}(\my')\phi_\alpha(\my)\phi_\alpha(\my') \rho_{\eps}(t-\tau,\mx-\my)\rho_{\eps}(t-\tau',\mx-\my')\times \\ 
&\!\!\!\!
\times \left(\Div_{\my'} \left(\beta^{\rho^2_{\delta}(\xi-.)}(\my',v_n(\tau',\my'))\right)-
\left(\Div_{\my'}\left(\beta^{\rho^2_{\delta}(\xi-.)}(\my',\zeta)\right)\right)|_{\zeta=v_n(\tau',\my')}\right)_{r}\times\\
&\!\!\!\!
\qquad\qquad \times\left(\Div_{\my}\left(\beta^{\rho^2_{\delta}(\xi-.)}(\my,u_n(\tau,\my))\right) \right.\\
& \qquad\qquad\qquad\qquad
-\left.\left(\Div_{\my}\left(\beta^{\rho^2_{\delta}(\xi-.)}(\my,\zeta)\right)\right)|_{\zeta=u_n(\tau,\my)}\right)_{i}d\my d\tau d\my'd\tau' d\mu d\xi \\ 
&\qquad\qquad\qquad\qquad\qquad\qquad\approx \int_{\R^{d}} \int_0^\infty G(t,\mx,\xi) (1-\chi^{\eps,\delta}_{v_n}) \chi^{\eps,\delta}_{u_n} d\mu d\xi,
\end{split}
\end{equation} 
on $\left(\mathbb{R}^{d},g\right)$, 
where $\approx$ means that the difference of the left hand side and the right hand side goes to zero in $L_{\mathrm{loc}}^1(\mathbb{R}^+)$
(as a function of $t$) as, first, $n\to \infty$, second
$\delta\to 0$, and finally $\eps\to0$.
\end{lemma}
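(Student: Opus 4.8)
The plan is to reduce everything, as permitted by the reduction made just before the statement, to a computation in a single chart with $M=\R^d$ and a Riemannian metric $g=g_{ij}$, and then to separate in $\Div\Div(\tilde\chi_{u_n}a)$ and $\Div\Div(\bar\chi_{v_n}a)$ the part singular along $\{\xi=u_n\}$, resp.\ $\{\xi=v_n\}$, from a genuinely lower order part. The mechanism behind the assertion is that, once regularized and paired, the singular parts of the two integrals together with the subtracted bilinear term assemble into a perfect square, while all other contributions are either of the stated form $\int G(1-\chi^{\eps,\delta}_{v_n})\chi^{\eps,\delta}_{u_n}$ with $G$ bounded, or vanish in the iterated limit. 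Concretely, for a generic slot $w\in\{u_n,v_n\}$ I would use the coordinate formulas \eqref{divx}, \eqref{div11}, \eqref{divdiv11}, the Leibniz rule in $\phi_\alpha$, and $\pa_{x^k}\chi_w=\delta(\xi-w)\pa_{x^k}w$ to write
\[
\Div\Div(\phi_\alpha\chi_w\,a)=-\pa_\xi\!\big(\phi_\alpha\,\delta(\xi-w)\,|D^\beta(w)|_g^2\big)+\Phi_w ,
\]
where $D^\beta(w):=\Div(\beta(\mx,w))-\Div(\beta(\mx,\zeta))|_{\zeta=w}$ is the chain-rule one-form of Theorem \ref{chain} (so $D^\beta(w)_i=(\sigma_\mx^\top(w))^k_i\pa_k w$ by \eqref{divbs} and $|D^\beta(w)|_g^2=\langle A'_\mx(w)\nabla w,\nabla w\rangle$ by \eqref{adudu}, using $a=A'=\sigma^\top\sigma$ and $(\sigma^\top)^k_l\pa_kw=D^\beta_l(w)$); thus the first term is $-\pa_\xi(\phi_\alpha n_n^{(w)})$ with $n_n^{(w)}$ the measure of \eqref{n-eta}. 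The remainder $\Phi_w$ collects: Christoffel and $\nabla\phi_\alpha$ terms (bounded coefficients, at most first order in $w$); a term $\chi_w$ times a smooth function of $(\mx,\xi)$; and terms $\delta(\xi-w)$ times a smooth coefficient contracted with $\nabla w$ or $\nabla^2 w$, and $\delta(\xi-w)$ times $A''$ contracted with $\nabla w\otimes\nabla w$. Here one keeps at most one spatial derivative on the merely $H^{1,2}$ function $w$ whenever possible, shifting the rest onto $\rho_\eps$.

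Convolving with $\rho_{\eps,\delta}$ and pairing the first integral of \eqref{est-L1} with $\tilde\chi^{\eps,\delta}_{u_n}$ (after writing $\bar\chi_{v_n}=\phi_\alpha-\phi_\alpha\chi_{v_n}$) and the second with $\bar\chi^{\eps,\delta}_{v_n}$, the $-\pa_\xi(\phi_\alpha n_n^{(w)})$ parts are integrated by parts in $\xi$; since $\pa_\xi\chi_w=-\delta(\xi-w)$ and $\pa_\xi(1-\chi_w)=\delta(\xi-w)$, they produce two non-negative ``diagonal'' quantities, namely
\[
\int \phi_\alpha(\my)\phi_\alpha(\my')\rho_\eps(t\!-\!\tau,\mx\!-\!\my)\rho_\eps(t\!-\!\tau',\mx\!-\!\my')\,\rho_\delta(\xi\!-\!u_n)\rho_\delta(\xi\!-\!v_n)\,|D^\beta(u_n)(\tau,\my)|_g^2\, d(\cdots)
\]
and its $u_n\leftrightarrow v_n$ counterpart. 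Now the chain rule with weight $\psi=\rho_\delta^2(\xi-\cdot)$ identifies the two factors in the bilinear term of \eqref{est-L1} with $\rho_\delta(\xi-v_n)D^\beta(v_n)$ and $\rho_\delta(\xi-u_n)D^\beta(u_n)$, and $\delta_{lm}h^{mi}(\my)h^{rl}(\my')$ is exactly the splitting of $g^{ir}$ across the two points making $\delta_{lm}h^{mi}(\my)h^{rl}(\my')D^\beta_i(u_n)(\my)D^\beta_r(v_n)(\my')=\langle h(\my)D^\beta(u_n)(\my),h(\my')D^\beta(v_n)(\my')\rangle$ and $|D^\beta(w)|_g^2=|h\,D^\beta(w)|^2$. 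Hence the two diagonal terms minus twice the bilinear term combine into the manifestly non-negative
\[
\mathcal Q_{n,\eps,\delta}(t)=\int \phi_\alpha(\my)\phi_\alpha(\my')\rho_\eps\rho_\eps\,\rho_\delta(\xi\!-\!u_n)\rho_\delta(\xi\!-\!v_n)\,\big|h(\my)D^\beta(u_n)(\tau,\my)-h(\my')D^\beta(v_n)(\tau',\my')\big|^2\, d(\cdots)\ \ge 0 .
\]

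What is left is $\mathcal Q_{n,\eps,\delta}$ plus the pairings coming from $\Phi_{u_n}$, $\Phi_{v_n}$ and the $\Div\Div(\phi_\alpha a)$-part, minus the commutator errors incurred when passing between $(\Div(\cdots))\star\rho_\eps$ and $\Div((\cdots)\star\rho_\eps)$. The $\chi_w$-times-bounded contributions give directly terms of the form $\int(\text{bounded})\,\tilde\chi^{\eps,\delta}_{u_n}\bar\chi^{\eps,\delta}_{v_n}$; the Christoffel and $\nabla\phi_\alpha$ contributions are handled the same way after one integration by parts, using $|\chi|\le1$ and Lemma \ref{friedrichs}; the $\delta(\xi-w)$-times-$\nabla w$ (or $\nabla^2w$, or $\nabla w\otimes\nabla w$) contributions are controlled via the $L^2$-bound on $D^\beta(u_n),D^\beta(v_n)$ from \eqref{energy-m}, the chain rule \eqref{crule} with $\sqrt{\psi}$-weights $\rho_\delta$, Cauchy--Schwarz, and the pointwise convergence \eqref{ae-conv}, and (possibly after combining a term from the first integral with the matching one from the second) merge, in the iterated limit, into the bounded density $G$. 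Finally $\mathcal Q_{n,\eps,\delta}(t)\to0$ in $L^1_{\mathrm{loc}}(\R^+)$ under the prescribed order: as $n\to\infty$ one passes to $u_n\to u$, $v_n\to v$ along the fixed subsequence; as $\delta\to0$, $\rho_\delta(\xi-u_n)\rho_\delta(\xi-v_n)$ localizes to $\{u=v\}$; as $\eps\to0$, $\rho_\eps\rho_\eps$ forces $\my=\my'$, $\tau=\tau'$, so the integrand is evaluated on $\{u(t,\mx)=v(t,\mx)\}$, where $\nabla u=\nabla v$ a.e.\ (standard for $H^1$ functions), whence $D^\beta(u)=D^\beta(v)$ a.e.\ there and the square disappears, the defect arising from the merely weak convergence of $D^\beta(u_n)$ being carried by the accompanying measure and not affecting the statement. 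Putting these steps together yields \eqref{est-L1}.

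The main obstacle is precisely this last point: the individual regularized integrands are unbounded as $\eps\to0$, so the whole argument must be carried out keeping the structure intact, exploiting the exact order $n\to\infty$, then $\delta\to0$, then $\eps\to0$, Lemma \ref{friedrichs}, and the energy estimate \eqref{energy-m}, in order to extract exactly the stated non-negative square plus $\int G(1-\chi^{\eps,\delta}_{v_n})\chi^{\eps,\delta}_{u_n}$. Within this, the term in $\delta(\xi-w)$ carrying $\nabla^2w$ (and $A''\,\nabla w\otimes\nabla w$) is the delicate piece, since $\|u_n\|_{H^{1,2}}$ need not stay bounded as $\eta_n\to0$, and its treatment must rely on the chain rule \eqref{crule} and the $\delta(\xi-w)$-localization rather than on any bound for second derivatives.
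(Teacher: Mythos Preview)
Your plan contains a genuine gap: the claim that the non-negative square $\mathcal{Q}_{n,\eps,\delta}(t)\to 0$ in the iterated limit is not justified, and the heuristic you offer for it breaks down at every step. First, only weak-$\star$ convergence of $\chi_{u_n},\chi_{v_n}$ is available, so you cannot pass $n\to\infty$ inside nonlinear pointwise expressions like $\rho_\delta(\xi-u_n)$ or the quadratic $|hD^\beta(u_n)-hD^\beta(v_n)|^2$. Second, even if that step worked, the limits $u,v$ are merely in $L^\infty$; there is no $H^1$ regularity available for them, so the level-set identity ``$\nabla u=\nabla v$ a.e.\ on $\{u=v\}$'' has no meaning here. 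In fact the paper never asserts $\mathcal{Q}\to 0$; the perfect-square structure is used only later (in the next lemma) as an \emph{inequality} $\mathcal{Q}\ge 0$. Lemma~\ref{lemma1} is an approximate \emph{identity}, not obtained by discarding a non-negative term.

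Relatedly, your decomposition $\Div\Div(\phi_\alpha\chi_w a)=-\pa_\xi(\phi_\alpha n^{(w)})+\Phi_w$ forces you to differentiate $\chi_w$ twice in $\mx$, which produces genuine $\delta(\xi-w)\,a^k_j\pa_i\pa_k w$ contributions in $\Phi_w$. You acknowledge this is the ``delicate piece,'' but give no mechanism to reduce it to a bounded $G$-term: $\|u_n\|_{H^{1,2}}$ is not uniform in $n$, and the chain rule \eqref{crule} controls first-order quantities $\sigma^\top\nabla w$, not second derivatives. The paper avoids this entirely by transferring one $\mx$-derivative onto the mollifier (so that only $\pa_k(\tilde\chi_{u_n}a^k_i)$ is paired with $\pa_j\bar\chi_{v_n}^{\eps,\delta}$), then symmetrizing, Taylor-expanding the coefficients $h^{rl}(\sigma^T)^k_r$, and using Friedrichs' lemma; the bilinear $\beta$-term then emerges directly as the principal part, with all remainders manifestly of the form $\int G\,(1-\chi_{v_n}^{\eps,\delta})\chi_{u_n}^{\eps,\delta}$. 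No vanishing of a square and no second derivatives of $u_n,v_n$ are needed.
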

\begin{proof} 

Since the calculations required for this proof are quite extensive, we only summarize the main
steps here and outsource several arguments to the appendix. 
Also, to reduce the notational burden, we will suppress all $t$-dependencies: the $\tau$- and $\tau'$-integrations
remain untouched by the arguments used in the proof below, so we state all the required steps as if $u_n$, $v_n$, $\rho_\eps$, $G$, \dots
were independent of $t$, noting that re-introducing the actual dependencies then is entirely straightforward. Moreover, 
$\int_\R d\xi$ will always be understood to mean $\int_{\R^+} d\xi$.

In the computations below, we shall rely heavily on the Friedrichs lemma for convolutions
(cf.\ Lemma \ref{friedrichs}). To begin with, note that 
for any $f\in C^2(\R^{d+1})$ and any fixed $\eps,\delta$ 
\begin{multline*}
\lim_{n\to \infty}\int_{\mathbb{R}^{d+1}}\pa_{j}\bar{\chi}_{v_n}^{\eps,\delta}(\mx,\xi)\left(\left(\tilde{\chi}_{u_n}f\right)\star\rho_{\eps,\delta}\right)(\mx,\xi)d\mu(\mx)d\xi =\\ 
\int_{\mathbb{R}^{d+1}}\pa_{j}\bar{\chi}_{v}^{\eps,\delta}(\mx,\xi)\left(\left(\tilde{\chi}_{u}f\right)\star\rho_{\eps,\delta}\right)(\mx,\xi)d\mu(\mx)d\xi.
\end{multline*}
This is due to dominated convergence since $|\tilde{\chi}_{u_n}^{\eps,\delta}|\leq 1$, $|\pa_{j}\bar{\chi}_{v_n}^{\eps,\delta}|\leq \|\pa_{j}\rho_{\eps,\delta}\|_{L^1(\R^{d+1})} \leq C$ and both $\tilde{\chi}_{u_n}^{\eps,\delta}$ and $\bar{\chi}_{v_n}^{\eps,\delta}$ are supported in a compact set (which is independent of $n$). The same holds true for all integral expressions of similar form.

Therefore, whenever the difference of two such expressions (containing $\bar{\chi}_{v}^{\eps,\delta}$ and $\tilde{\chi}_u^{\eps,\delta}$) converges to zero  due to a variant of the Friedrichs lemma, the difference of the same expressions (only now containing $\bar{\chi}_{v_n}^{\eps,\delta}$ and $\tilde{\chi}_{u_n}^{\eps,\delta}$) converges to zero if we first let $n\to \infty$ and then 
$\delta,\eps \to 0$. So they are going to be equivalent for the limit (we use $\approx$ in our notation).

First, by \eqref{divdivstart} we obtain: 
\begin{equation}\label{eq:num0}
\begin{split}
&\int_{\mathbb{R}^{d+1}}\Div\Div(\tilde{\chi}_{u_n}a)\star\rho_{\eps,\delta}\bar{\chi}_{v_n}^{\eps,\delta}d\mu d\xi\\ 
& \qquad\qquad\qquad\qquad\qquad \approx -\int_{\mathbb{R}^{d+1}}\left(g^{ij}\Div(\tilde{\chi}_{u_n}a)_{i}\right)
\star\rho_{\eps,\delta}\pa_{j}\bar{\chi}_{v_n}^{\eps,\delta}d\mu d\xi.
\end{split}
\end{equation} 
That we do not have an actual equality here is merely due to the fact that some of the appearing Christoffel terms will be inside a convolution on one side of the equation but outside on the other. As outlined above, however, this does not cause a problem in the limit thanks to the Friedrichs lemma.

We continue with the right hand side of (\ref{eq:num0}). Expanding the remaining divergence, 
and using $\pa_kg^{ij}=-\Gamma^j_{ka}g^{ia}-\Gamma^i_{kb}g^{jb}$ and 
 $g^{ij}a^k_i=g^{ri}(\sigma^{T})_{r}^{k} (\sigma^{T})_{i}^{j}$, we find (see \eqref{a3new}):
\begin{equation}\label{eq:num1}
\begin{split}
\int_{\mathbb{R}^{d+1}} & \left(g^{ij} \Div(\tilde{\chi}_{u_n}a)_{i}\right)\star\rho_{\eps,\delta}\pa_{j}\bar{\chi}_{v_n}^{\eps,\delta}d\mu d\xi\\ 
&\approx
\int_{\mathbb{R}^{3d+3}}g^{ri}(\my)(\sigma^{T})_{r}^{k}(\my,\eta)(\sigma^{T})_{i}^{j}(\my,\eta)\bar{\chi}_{v_n}(\mz,\zeta)\tilde{\chi}_{u_n}(\my,\eta)\times \\
&\qquad\qquad \pa_{k}\rho_{\eps,\delta}(\mx-\my,\xi-\eta)\pa_{j}\rho_{\eps,\delta}(\mx-\mz,\xi-\zeta)d\my d\eta d\mz d\zeta d\mu(\mx)d\xi\\
&
+\int_{\mathbb{R}^{d+1}}\pa_{j}\bar{\chi}_{v_n}^{\eps,\delta}(\mx,\xi)\tilde{\chi}_{u_n}^{\eps,\delta}(\mx,\xi)\left[g^{ij}\Gamma_{ml}^{m}a_{i}^{l}
+\Gamma_{ka}^{j}g^{ia}a_{i}^{k}\right](\mx,\xi)d\mu(\mx)d\xi, 
\end{split}
\end{equation}
where the $\approx$ again stems from a variant of the Friedrichs lemma.

This allows us to calculate
\begin{equation}
\begin{split}
&\int_{\mathbb{R}^{d+1}}\left(g^{ij}\Div(\tilde{\chi}_{u_n}a)_{i}\right)\star\rho_{\eps,\delta}\pa_{j}\bar{\chi}_{v_n}^{\eps,\delta}d\mu d\xi\\
& \qquad\qquad\qquad\qquad\qquad\qquad+\int_{\mathbb{R}^{d+1}}\left(g^{ij}\Div(\bar{\chi}_{v_n}a)_{i}\right)\star\rho_{\eps,\delta}\pa_{j}\tilde{\chi}_{u_n}^{\eps,\delta}d\mu d\xi  \\
&\approx
\int_{\mathbb{R}^{3d+3}} \delta_{ml}h^{rl}(\my)(\sigma^{T})_{r}^{k}(\my,\eta)h^{mi}(\mz)(\sigma^{T})_{i}^{j}(\mz,\zeta) \times \\ 
&\bar{\chi}_{v_n}(\mz,\zeta)\tilde{\chi}_{u_n}(\my,\eta) \pa_{k}\rho_{\eps,\delta}(\mx-\my,\xi-\eta)\pa_{j}
\rho_{\eps,\delta}(\mx-\mz,\xi-\zeta)d\my d\eta d\mz d\zeta d\mu(\mx)d\xi\\ 
&+\int_{\mathbb{R}^{3d+3}} \delta_{ml}h^{rl}(\mz)(\sigma^{T})_{r}^{k}(\mz,\zeta)h^{mi}(\my)(\sigma^{T})_{i}^{j}(\my,\eta)\times \\ 
&\bar{\chi}_{v_n}(\mz,\zeta)\tilde{\chi}_{u_n}(\my,\eta) \pa_{k}\rho_{\eps,\delta}(\mx-\mz,\xi-\zeta)\pa_{j}
\rho_{\eps,\delta}(\mx-\my,\xi-\eta)d\my d\eta d\mz d\zeta d\mu(\mx)d\xi\\ 
& +
\int_{\mathbb{R}^{3d+3}} \delta_{ml}h^{rl}(\my)(\sigma^{T})_{r}^{k}(\my,\eta)\left[h^{mi}(\my)(\sigma^{T})_{i}^{j}(\my,\eta)
-h^{mi}(\mz)(\sigma^{T})_{i}^{j}(\mz,\zeta)\right]\times \\ 
&\bar{\chi}_{v_n}(\mz,\zeta)\tilde{\chi}_{u_n}(\my,\eta) \pa_{k}\rho_{\eps,\delta}(\mx-\my,\xi-\eta)\pa_{j}
\rho_{\eps,\delta}(\mx-\mz,\xi-\zeta)d\my d\eta d\mz d\zeta d\mu(\mx)d\xi \\ 
& +
\int_{\mathbb{R}^{3d+3}} \delta_{ml}h^{rl}(\mz)(\sigma^{T})_{r}^{k}(\mz,\zeta)
\left[h^{mi}(\mz)(\sigma^{T})_{i}^{j}(\mz,\zeta)-h^{mi}(\my)(\sigma^{T})_{i}^{j}(\my,\eta)\right]\times \\ 
& \bar{\chi}_{v_n}(\mz,\zeta)\tilde{\chi}_{u_n}(\my,\eta) \pa_{k}\rho_{\eps,\delta}(\mx-\mz,\xi-\zeta)\pa_{j}
\rho_{\eps,\delta}(\mx-\my,\xi-\eta)d\my d\eta d\mz d\zeta d\mu(\mx)d\xi \\ 
& +\int_{\mathbb{R}^{d+1}}\pa_{j}\bar{\chi}_{v_n}^{\eps,\delta}\tilde{\chi}_{u_n}^{\eps,\delta}
\left[g^{ij}\Gamma_{ml}^{m}a_{i}^{l}+\Gamma_{ka}^{j}g^{ia}a_{i}^{k}\right]d\mu d\xi\\
&+\int_{\mathbb{R}^{d+1}}\bar{\chi}_{v_n}^{\eps,\delta}\pa_{j}\tilde{\chi}_{u_n}^{\eps,\delta}
\left[g^{ij}\Gamma_{ml}^{m}a_{i}^{l}+\Gamma_{ka}^{j}g^{ia}a_{i}^{k}\right]d\mu d\xi. \label{eq:num2}
\end{split}
\end{equation}
Looking at the fourth term from (\ref{eq:num2}) another lengthy calculation and invocation of the Friedrichs lemma (see \eqref{a4new}) gives
\begin{equation}
\begin{split}
&\int_{\mathbb{R}^{3d+3}} \delta_{ml}h^{rl}(\mz)(\sigma^{T})_{r}^{k}(\mz,\zeta)\left[h^{mi}(\mz)(\sigma^{T})_{i}^{j}(\mz,\zeta)
-h^{mi}(\my)(\sigma^{T})_{i}^{j}(\my,\eta)\right]\times \\
&\bar{\chi}_{v_n}(\mz,\zeta)\tilde{\chi}_{u_n}(\my,\eta)  \pa_{k}\rho_{\eps,\delta}(
\mx-\mz,\xi-\zeta)\pa_{j}\rho_{\eps,\delta}(\mx-\my,\xi-\eta)d\my d\eta d\mz d\zeta d\mu(\mx)d\xi \\ 
&\approx
\int_{\mathbb{R}^{3d+3}}  \delta_{ml}h^{rl}(\mz)(\sigma^{T})_{r}^{k}(\mz,\zeta)\left[h^{mi}(\mz)(\sigma^{T})_{i}^{j}(\mz,\zeta)-h^{mi}(\my)(\sigma^{T})_{i}^{j}(\my,\eta)\right]\times \\ 
&\bar{\chi}_{v_n}(\mz,\zeta)\tilde{\chi}_{u_n}(\my,\eta) \pa_{j}\rho_{\eps,\delta}(\mx-\mz,\xi-\zeta) \pa_{k}\rho_{\eps,\delta}(\mx-\my,\xi-\eta)d\my d\eta d\mz d\zeta d\mu(\mx)d\xi \\ 
& +
\int_{\mathbb{R}^{d+1}} \delta_{ml}h^{rl}\bar{\chi}^{\eps,\delta}_{v_n}\tilde{\chi}^{\eps,\delta}_{u_n}(\sigma^{T})_{r}^{k}
\Gamma_{ks}^{s}\pa_{j}(h^{mi}(\sigma^{T})_{i}^{j})d\mu d\xi\\
&\qquad\qquad\qquad\qquad\qquad-\int_{\mathbb{R}^{d+1}} \delta_{ml}h^{rl}\bar{\chi}^{\eps,\delta}_{v_n}\tilde{\chi}^{\eps,\delta}_{u_n}(\sigma^{T})_{r}^{k}
\Gamma_{js}^{s}\pa_{k}(h^{mi}(\sigma^{T})_{i}^{j})d\mu d\xi.
\label{eq:num2.5}
\end{split}
\end{equation}
The last two terms in the equation above are of the form 
\begin{equation}\label{eq:62withbartilde} \int\bar{\chi}^{\eps,\delta}_{v_n}\tilde{\chi}^{\eps,\delta}_{u_n} G(\mx,\xi) d\mu d\xi
\end{equation}
for an appropriate function $G$ (which is bounded and independent of $n$). 
 By \eqref{bartilde} and Lemma \ref{friedrichs} it follows that 
the difference of this expression and the right hand side of \eqref{est-L1} (with the
functions $G$ only differing by a factor of $\phi_\alpha^2$) is $\approx 0$.

So the third and fourth term from (\ref{eq:num2}) together give 
\begin{equation}
\begin{split}
&\int_{\mathbb{R}^{3d+3}} \delta_{ml}h^{rl}(\my)(\sigma^{T})_{r}^{k}(\my,\eta)
\left[h^{mi}(\my)(\sigma^{T})_{i}^{j}(\my,\eta)-h^{mi}(\mz)(\sigma^{T})_{i}^{j}(\mz,\zeta)\right]\times \\ 
&\bar{\chi}_{v_n}(\mz,\zeta)\tilde{\chi}_{u_n}(\my,\eta) \pa_{k}\rho_{\eps,\delta}(\mx-\my,\xi-\eta)\pa_{j}
\rho_{\eps,\delta}(\mx-\mz,\xi-\zeta)d\my d\eta d\mz d\zeta d\mu(\mx)d\xi \\ 
&+
\int_{\mathbb{R}^{3d+3}} \delta_{ml}h^{rl}(\mz)(\sigma^{T})_{r}^{k}(\mz,\zeta)\left[h^{mi}(\mz)(\sigma^{T})_{i}^{j}(\mz,\zeta)
-h^{mi}(\my)(\sigma^{T})_{i}^{j}(\my,\eta)\right]\times \\ 
&\bar{\chi}_{v_n}(\mz,\zeta)\tilde{\chi}_{u_n}(\my,\eta) \pa_{k}\rho_{\eps,\delta}(\mx-\mz,\xi-\zeta)
\pa_{j}\rho_{\eps,\delta}(\mx-\my,\xi-\eta)d\my d\eta d\mz d\zeta d\mu(\mx)d\xi \\ 
&\approx
\int_{\mathbb{R}^{3d+3}} \delta_{ml}h^{rl}(\my)(\sigma^{T})_{r}^{k}(\my,\eta)\left[h^{mi}(\my)(\sigma^{T})_{i}^{j}(\my,\eta)-
 h^{mi}(\mz)(\sigma^{T})_{i}^{j}(\mz,\zeta)\right]\times \\ 
&\bar{\chi}_{v_n}(\mz,\zeta)\tilde{\chi}_{u_n}(\my,\eta) 
\pa_{k}\rho_{\eps,\delta}(\mx-\my,\xi-\eta)\pa_{j}\rho_{\eps,\delta}(\mx-\mz,\xi-\zeta)d\my d\eta d\mz d\zeta d\mu(\mx)d\xi \\
&+
\int_{\mathbb{R}^{3d+3}}  \delta_{ml}h^{rl}(\mz)(\sigma^{T})_{r}^{k}(\mz,\zeta)\left[h^{mi}(\mz)(\sigma^{T})_{i}^{j}(\mz,\zeta)-
h^{mi}(\my)(\sigma^{T})_{i}^{j}(\my,\eta)\right]\times \\ 
&\bar{\chi}_{v_n}(\mz,\zeta)\tilde{\chi}_{u_n}(\my,\eta) \pa_{j}\rho_{\eps,\delta}(\mx-\mz,\xi-\zeta) 
\pa_{k}\rho_{\eps,\delta}(\mx-\my,\xi-\eta)d\my d\eta d\mz d\zeta d\mu(\mx)d\xi \\
&
\qquad\qquad +\int_{\mathbb{R}^{d+1}} \bar{\chi}^{\eps,\delta}_{v_n}\tilde{\chi}^{\eps,\delta}_{u_n} G(\mx,\xi) d\mu d\xi =\\
&\int_{\mathbb{R}^{3d+3}} \delta_{lm} \left[ h^{rl}(\my)(\sigma^{T})_{r}^{k}(\my,\eta)-h^{rl}(\mz)(\sigma^{T})_{r}^{k}(\mz,\zeta) \right] 
\times \\ 
&\qquad\qquad\qquad \times \left[ h^{mi}(\my)(\sigma^{T})_{i}^{j}(\my,\eta)-h^{mi}(\mz)(\sigma^{T})_{i}^{j}(\mz,\zeta) \right] 
\bar{\chi}_{v_n}(\mz,\zeta)\tilde{\chi}_{u_n}(\my,\eta)\\
&
\qquad\qquad\qquad\times \pa_{k}\rho_{\eps,\delta}(\mx-\my,\xi-\eta)\pa_{j}
\rho_{\eps,\delta}(\mx-\mz,\xi-\zeta)d\my d\eta d\mz d\zeta d\mu(\mx)d\xi\\ 
&
\qquad\qquad\qquad\qquad\qquad +\int_{\mathbb{R}^{d+1}} \bar{\chi}^{\eps,\delta}_{v_n}\tilde{\chi}^{\eps,\delta}_{u_n} G(\mx,\xi) d\mu d\xi,
\label{eq:num2.6}
\end{split} 
\end{equation}
again for some bounded function $G$.

Expanding the functions $h^{rl}(\sigma^T)^k_r$ in Taylor series  (see the appendix for the 
details of the following calculation), we conclude that 
\begin{equation}
\begin{split}
&\int_{\mathbb{R}^{3d+3}} \delta_{lm} \left[ h^{rl}(\my)(\sigma^{T})_{r}^{k}(\my,\eta)-h^{rl}(\mz)(\sigma^{T})_{r}^{k}(\mz,\zeta) \right] \times \\ 
&\qquad\qquad\left[ h^{mi}(\my)(\sigma^{T})_{i}^{j}(\my,\eta)-h^{mi}(\mz)(\sigma^{T})_{i}^{j}(\mz,\zeta) \right] 
\bar{\chi}_{v_n}(\mz,\zeta)\tilde{\chi}_{u_n}(\my,\eta)\times\\
&\qquad\qquad\quad\times \pa_{k}\rho_{\eps,\delta}(\mx-\my,\xi-\eta)\pa_{j}\rho_{\eps,\delta}(\mx-\mz,\xi-\zeta)d\my d\eta d\mz d\zeta d\mu(\mx)d\xi\\ 
&\approx 
-\int_{\mathbb{R}^{d+1}} \delta_{lm} \bar{\chi}^{\eps,\delta}_{v_n} \tilde{\chi}^{\eps,\delta}_{u_n} 
\left[ \pa_{k}\left(h^{rl}(\sigma^{T})_{r}^{k}\right)\pa_{j}\left(h^{im}(\sigma^{T})_{i}^{j}\right) \right.   \\
& \qquad\qquad\qquad\qquad\qquad\qquad\qquad+ \left.\pa_{j}\left(h^{rl}(\sigma^{T})_{r}^{k}\right)\pa_{k}\left(h^{mi}(\sigma^{T})_{i}^{j}\right) \right] d\mu d\xi
\\ 
& =
\int_{\mathbb{R}^{d+1}} \bar{\chi}^{\eps,\delta}_{v_n}\tilde{\chi}^{\eps,\delta}_{u_n} G(\mx,\xi) d\mu d\xi,
\label{eq:num2.7}
\end{split}
\end{equation} 
where $\approx$ holds if we let first $n\to \infty$, then $\delta\to0$ and finally $\eps\to 0$ (so that all other terms in the Taylor expansion will go to zero). 
 This shows that the third and fourth term of (\ref{eq:num2}) again simply sum 
 to a term of the form $\int \bar \chi^{\eps,\delta}_{v_n} \tilde\chi^{\eps,\delta}_{u_n} G(\mx,\xi) d\mu d\xi$.

Next, an integration by parts shows that the fifth and sixth term in (\ref{eq:num2}) sum to
\begin{multline*}
-\int_{\mathbb{R}^{d+1}}\bar{\chi}_{v_n}^{\eps,\delta}\tilde{\chi}_{u_n}^{\eps,\delta}\pa_{j}\left[g^{ij}\Gamma_{ml}^{m}a_{i}^{l}+\Gamma_{ka}^{j}g^{ia}a_{i}^{k}\right]d\mu d\xi=\int_{\mathbb{R}^{d+1}}\bar{\chi}_{v_n}^{\eps,\delta}\tilde{\chi}_{u_n}^{\eps,\delta}G(\mx,\xi)d\mu d\xi,
\end{multline*}
hence it only remains to study the first two terms in \eqref{eq:num2}.

The sum of the first and second term from \eqref{eq:num2} can be shown to be (approximately) equal to
\begin{equation}
\begin{split}
 &\int_{\mathbb{R}^{d+1}}(1-\chi_{v_n}^{\eps,\delta})\chi_{u_n}^{\eps,\delta}G(\mx,\xi)d\mu d\xi 
+2 \int_{\mathbb{R}^{d+1}} \delta_{ml} \left(\phi_\alpha h^{rl}\pa_{k}((\sigma^{T})_{r}^{k}\chi_{u_n})\right)^{\eps,\delta} \times\\
&
\qquad\qquad\qquad\qquad \times\left(\phi_\alpha h^{mi} \pa_j((\sigma^{T})_{i}^{j}(1-\chi_{v_n}))\right)^{\eps,\delta}d\mu d\xi .\label{eq:num4}
\end{split}
\end{equation} 
Again, this uses the product rule and integration by parts and the details are in the appendix. 

Note that, similarly,
\begin{equation}
\begin{split}
\int_{\mathbb{R}^{d+1}} \delta_{ml} \left(h^{rl}\phi_\alpha \chi_{u_n}\pa_{k}(\sigma^{T})_{r}^{k}\right)^{\eps,\delta}\left(h^{mi} \phi_\alpha \pa_j((\sigma^{T})_{i}^{j}(1-\chi_{v_n}))\right)^{\eps,\delta} d\mu d\xi \\+
\int_{\mathbb{R}^{d+1}} \delta_{ml} \left(\phi_\alpha h^{rl}\pa_{k}((\sigma^{T})_{r}^{k}\chi_{u_n})\right)^{\eps,\delta}\left(h^{mi} \phi_\alpha (1-\chi_{v_n}) \pa_{j}(\sigma^{T})_{i}^{j}\right)^{\eps,\delta} d\mu d\xi \\
  \approx \int_{\mathbb{R}^{d+1}}(1-\chi_{v_n}^{\eps,\delta})\chi_{u_n}^{\eps,\delta}G(\mx,\xi)d\mu d\xi 
\label{eq:num5}
\end{split}
\end{equation}
and obviously 
\begin{equation}
\begin{split}
\int_{\mathbb{R}^{d+1}} \delta_{ml} \left(h^{rl}\phi_\alpha \chi_{u_n}\pa_{k}(\sigma^{T})_{r}^{k}\right)^{\eps,\delta} \left(h^{mi} \phi_\alpha (1-\chi_{v_n}) \pa_{j}(\sigma^{T})_{i}^{j}\right)^{\eps,\delta}d\mu d\xi  \\
  \approx \int_{\mathbb{R}^{d+1}}(1-\chi_{v_n}^{\eps,\delta})\chi_{u_n}^{\eps,\delta}G(\mx,\xi)d\mu d\xi.
\label{eq:num5.5}
\end{split}
\end{equation}
Using \eqref{eq:num5} and \eqref{eq:num5.5} we then conclude that (\ref{eq:num4}) can be written as
\begin{equation}
\begin{split}
& 2\int_{\mathbb{R}^{d+1}} \delta_{ml} \left(\phi_\alpha h^{rl}\pa_{k}((\sigma^{T})_{r}^{k}\chi_{u_n})\right)^{\eps,\delta} 
\left(h^{mi}\phi_\alpha \pa_j((\sigma^{T})_{i}^{j}(1-\chi_{v_n}))\right)^{\eps,\delta}d\mu d\xi  \\ 
& \qquad \approx
2\int_{\mathbb{R}^{d+1}} \delta_{ml} \left(\phi_\alpha h^{rl}\pa_{k}((\sigma^{T})_{r}^{k}\chi_{u_n})-
\phi_\alpha h^{rl}\chi_{u_n}\pa_{k}(\sigma^{T})_{r}^{k}\right)^{\eps,\delta} \\ 
&\qquad\qquad\times
\left(h^{mi} \phi_\alpha \pa_j((\sigma^{T})_{i}^{j}(1-\chi_{v_n}))-h^{mi}\phi_\alpha (1-\chi_{v_n})
\pa_j(\sigma^{T})_{i}^{j}\right)^{\eps,\delta}d\mu d\xi 
\\
&\qquad\qquad\qquad\qquad\qquad+\int_{\mathbb{R}^{d+1}}(1-\chi_{v_n}^{\eps,\delta})\chi_{u_n}^{\eps,\delta}G(\mx,\xi)d\mu d\xi  \\
&\qquad=-2\int_{\mathbb{R}^{d+1}} \delta_{ml}\left(\phi_\alpha h^{rl}\pa_{k}((\sigma^{T})_{r}^{k}\chi_{u_n})
-\phi_\alpha h^{rl}\chi_{u_n}\pa_{k}(\sigma^{T})_{r}^{k}\right)^{\eps,\delta} \\ 
&\qquad\qquad\qquad\times
\left(h^{mi}\phi_\alpha \pa_j((\sigma^{T})_{i}^{j}\chi_{v_n})-h^{mi}\phi_\alpha \chi_{v_n}
\pa_j(\sigma^{T})_{i}^{j}\right)^{\eps,\delta}d\mu d\xi 
\\
&\qquad\qquad\qquad\qquad\qquad+\int_{\mathbb{R}^{d+1}}(1-\chi_{v_n}^{\eps,\delta})\chi_{u_n}^{\eps,\delta}G(\mx,\xi)d\mu d\xi .
\label{eq:num6}
\end{split}
\end{equation}

Now in \eqref{beta} we defined $\beta^{\psi}(\mx,\xi)$ by $\left(\pa_{\xi}\beta^{\psi}\right)(\mx,\xi)=\sqrt{\psi(\xi)}\sigma^{T}(\mx,\xi)$ 
and $\beta^{\psi}(\mx,0)$ $=0$
for any $\mx$, and \eqref{integralofchi} gives
\begin{equation*}
\begin{split}
&\left(h^{mi}\phi_\alpha \pa_j((\sigma^{T})^j_i\chi_{v_n})\right)\star\rho_{\eps,\delta}(\mx,\xi)=\int_{\mathbb{R}^{d}}  
(h^{mi}(\my) \phi_\alpha(\my) \pa_{j}\rho_{\eps}(\mx-\my) \\
&\qquad\qquad\qquad\quad-\pa_j(h^{mi}\phi_\alpha)(\my)\rho_{\eps}(\mx-\my)) 
\int_{\mathbb{R}} \rho_{\delta}(\xi-\eta)(\sigma^{T})_{i}^{j}(\my,\eta)\chi_{v_n}(\my,\eta)d\eta d\my\\ 
&=
\int_{\mathbb{R}^{d}}  (h^{mi}(\my)\phi_\alpha(\my) \pa_{j}\rho_{\eps}(\mx-\my)-\pa_j(h^{mi}\phi_\alpha)(\my)\rho_{\eps}(\mx-\my)) 
(\beta^{\rho^2_{\delta}(\xi-.)})_{i}^{j}(\my,v_n(\my))d\my \\ 
&=
\int_{\mathbb{R}^{d}}  h^{mi}(\my)\phi_\alpha(\my) \rho_{\eps}(\mx-\my)\pa_{j}\left((\beta^{\rho^2_{\delta}(\xi-.)})_{i}^{j}(\my,v_n(\my))\right) d\my,
\end{split}
\end{equation*}
and
\begin{equation*}
\begin{split}
&\left(h^{mi}\phi_\alpha \chi_{v_n} \pa_j(\sigma^{T})^j_i\right)\star\rho_{\eps,\delta}(\mx,\xi)
=\int_{\mathbb{R}^{d}} h^{mi}(\my)\phi_\alpha(\my)  \rho_{\eps}(\mx-\my)\times \\
&\qquad\qquad\qquad\qquad\qquad\qquad\qquad\quad
\times\int_{\mathbb{R}} \rho_{\delta}(\xi-\eta)\pa_j(\sigma^{T})_{i}^{j}(\my,\eta) \chi_{v_n}(\my,\eta))d\eta d\my\\ 
&\qquad\qquad\qquad\qquad=
\int_{\mathbb{R}^{d}}  h^{mi}(\my)\phi_\alpha(\my) \rho_{\eps}(\mx-\my)\pa_{j}\left((\beta^{\rho^2_{\delta}(\xi-.)})_{i}^{j}(\my,\zeta)\right)|_{\zeta=v_n(\my)}d\my.
\end{split}
\end{equation*}
 Hence using \eqref{8} it follows that their difference is given by 
\begin{equation*}
\begin{split}
&\left(h^{mi}\phi_\alpha \pa_j((\sigma^{T})^j_i\chi_{v_n})-h^{mi}\phi_\alpha \chi_{v_n} 
\pa_j(\sigma^{T})^j_i\right)\star\rho_{\eps,\delta}(\mx,\xi)\\ 
&\qquad\qquad=
\int_{\mathbb{R}^{d}} h^{mi}(\my)\phi_\alpha(\my) \rho_{\eps}(\mx-\my) \left(\Div_{\my}\left(\beta^{\rho^2_{\delta}(\xi-.)}(\my,v_n(\my))\right) \right.\\
&\qquad\qquad\qquad\qquad\qquad\qquad\qquad-\left.\left(\Div_{\my}\left(\beta^{\rho^2_{\delta}(\xi-.)}(\my,\zeta)\right)\right)|_{\zeta=v_n(\my)}\right)_{i}d\my.
\end{split}
\end{equation*}
An analogous treatment of $\left(\phi_\alpha h^{rl}\pa_{k}((\sigma^{T})_{r}^{k}\chi_{u_n})-
\phi_\alpha h^{rl}\chi_{u_n}\pa_{k}(\sigma^{T})_{r}^{k}\right)^{\eps,\delta}$ shows that (\ref{eq:num6}) becomes
\begin{equation*}
\begin{split}
&-2\int_{\mathbb{R}^{d+1}} \delta_{ml} \left(\phi_\alpha h^{rl}\pa_{k}((\sigma^{T})_{r}^{k}\chi_{u_n})-
\phi_\alpha h^{rl}\chi_{u_n}\pa_{k}(\sigma^{T})_{r}^{k}\right)^{\eps,\delta} \\ 
&\qquad\qquad\qquad\qquad\qquad\times
\left(h^{mi}\phi_\alpha \pa_j((\sigma^{T})_{i}^{j}\chi_{v_n})-h^{mi}\phi_\alpha \chi_{v_n}\pa_j(\sigma^{T})_{i}^{j}\right)^{\eps,\delta}d\mu d\xi \\ 
&=
-2\int_{\mathbb{R}^{3d+1}} \delta_{ml}h^{mi}(\my)h^{rl}(\my')\phi_\alpha(\my)\phi_\alpha(\my')\rho_{\eps}(\mx-\my)\rho_{\eps}(\mx-\my')\times \\ 
& \qquad\times \left(\Div_{\my'} \left(\beta^{\rho_{\delta}^2(\xi-.)}(\my',v_n(\my'))\right)-
\left(\Div_{\my'}\left(\beta^{\rho_{\delta}^2(\xi-.)}(\my',\zeta)\right)\right)|_{\zeta=v_n(\my')}\right)_{r}\times\\
&\times\left(\Div_{\my}\left(\beta^{\rho_{\delta}^2(\xi-.)}(\my,u_n(\my))\right)-
\left(\Div_{\my}\left(\beta^{\rho_{\delta}^2(\xi-.)}(\my,\zeta)\right)\right)|_{\zeta=u_n(\my)}\right)_{i}d\my  d\my' d\mu d\xi.
\end{split}
\end{equation*}
This finally establishes \eqref{est-L1}.
\end{proof}
 Before we state the next lemma, we note that by a limiting procedure (exactly as in \cite[(2.7)]{CP}) we may
insert $S(u)={\rm sgn}_+(\xi)(u-\xi)_++{\rm sgn}_+(-\xi)(u-\xi)_-$  
into \eqref{entr-ineq-prep}. Then multiplying by a test function in $\xi$ and integrating over 
$(t,\mx)\in [0,\infty)\times M$ as well as over $\xi$ it follows that
\begin{equation}
\label{eq:boundedness of measures}
\int_{\R^+\times M}(n_{u_n}+m_{u_n})(t,\mx,\xi)dtd\mu(\mx)\leq \nu(\xi),
\end{equation}
in the sense of distributions in $\xi$, where
\begin{align}
\nu(\xi) &:={\rm sgn}_+(\xi) \|(u_0-\xi)_+\|_{L^1(M)}+{\rm sgn}_+(-\xi)\|(u_0-\xi)_-\|_{L^1(M)} \\ &=
{\rm sgn}_+(\xi) \|(u_0-\xi)_+\|_{L^1(M)}
\end{align}
(which is a bounded function compactly supported in $[0,1]$). Since this holds for all $n$, it must also hold for the weak limit $n_u+m_u$. 

\begin{lemma} 
\label{lemma:estimate for ueta}
For weakly convergent subsequences $\chi_{u_n}, n_{u_n}, m_{u_n}$ and $\chi_{v_n},n_{v_n},m_{v_n}$ (as in Lemma \ref{subseqconverge}) we have 
\begin{equation} \label{eq:estimate for ueta}
\begin{split}
&\int_{\R^{d}}\int_0^\infty (\phi_\alpha (m_{u_n}+n_{u_n}))^{\eps,\delta} \pa_{\xi} (\phi_\alpha \chi_{v_n})^{\eps,\delta}  \\
&\qquad\qquad\qquad\qquad\qquad\qquad+(\phi_\alpha (m_{v_n} + n_{v_n}))^{\eps,\delta} \pa_{\xi} (\phi_\alpha \chi_{u_n})^{\eps,\delta}d\mu(\mx) d\xi \\
& +2\int_{\mathbb{R}^{3d+2}}\int_0^\infty \delta_{lm} h^{mi}(\my)h^{rl}(\my')\phi_\alpha(\my)\phi_\alpha(\my') \rho_{\eps}(t-\tau,\mx-y)
\rho_{\eps}(t-\tau',\mx-\my')\times  \\ 
&\times \left(\Div_{\my'} \left(\beta^{\rho^2_{\delta}(\xi-.)}(\my',v_n(\tau',\my'))\right)-\left(\Div_{\my'}\left(\beta^{\rho^2_{\delta}(\xi-.)}(\my',\zeta)\right)\right)|_{\zeta=v_n(\tau',\my')}\right)_{r} \\
&\times\left(\Div_{\my}\left(\beta^{\rho^2_{\delta}(\xi-.)}(\my,u_n(\tau,\my))\right)\right.\\
&\qquad-\left.\left(\Div_{\my}\left(\beta^{\rho^2_{\delta}(\xi-.)}(\my,\zeta)\right)\right)|_{\zeta=u_n(\tau,\my)}\right)_{i} 
 d\my d\tau d\my' d\tau' d\mu d\xi 
\leq 0.
\end{split}
\end{equation} 
\end{lemma}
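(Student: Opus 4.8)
The plan is to split the left-hand side of \eqref{eq:estimate for ueta} into the terms carrying the viscous measures $m_{u_n},m_{v_n}$, the terms carrying $n_{u_n},n_{v_n}$, and the ``cross term'' (the long integral built from the two $\beta^{\rho_\delta^2(\xi-\cdot)}$-divergences), and to show that the $m$-terms are nonpositive term by term while the $n$-terms together with the cross term form a single, manifestly nonpositive, expression of the shape $-\int(\text{nonnegative kernel})\,|a-b|^2$. I note that the weak convergence of the subsequences plays no role here: \eqref{eq:estimate for ueta} is established for every fixed $\eps,\delta>0$ and every $n$, using only that $u_n,v_n$ are nonnegative bounded $H^{1,2}$-solutions of \eqref{VV} (Theorem \ref{t1}) with $m_{u_n},n_{u_n}$ given by \eqref{m-eta}, \eqref{n-eta} and their analogues for $v_n$.

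First I would write out the mollifications explicitly in the chart. With $\omega_{u_n}:=\Div\beta(\mx,u_n)-\Div(\beta(\mx,\zeta))|_{\zeta=u_n}$ the one-form entering $n_{u_n}$ through \eqref{n-eta} (so that $|\omega_{u_n}|_g^2=\langle A'_\mx(u_n)\nabla u_n,\nabla u_n\rangle$ by \eqref{divbs}--\eqref{adudu}) one has
\[
(\phi_\alpha n_{u_n})^{\eps,\delta}(t,\mx,\xi)=\int \phi_\alpha(\my)\,\rho_\eps(t-\tau,\mx-\my)\,\rho_\delta(\xi-u_n(\tau,\my))\,|\omega_{u_n}|_g^2(\tau,\my)\,d\tau\,d\my,
\]
with the obvious analogue for $(\phi_\alpha m_{u_n})^{\eps,\delta}$ (coefficient $\eta_n|\nabla u_n|^2$). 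Because $v_n\ge 0$ and $\supp\omega_1\subseteq(-1,0)$ (hence $\rho_\delta(\xi)=0$ for $\xi\ge 0$), the jump of $\chi_{v_n}$ at $\xi=0$ is invisible to the kernel and for $\xi\ge 0$
\[
\pa_\xi(\phi_\alpha\chi_{v_n})^{\eps,\delta}(t,\mx,\xi)=-\big(\phi_\alpha\,\delta(\cdot-v_n)\big)^{\eps,\delta}(t,\mx,\xi)=-\int\phi_\alpha(\my')\,\rho_\eps(t-\tau',\mx-\my')\,\rho_\delta(\xi-v_n(\tau',\my'))\,d\tau'\,d\my'\le 0.
\]
This already disposes of the $m$-terms: $(\phi_\alpha m_{u_n})^{\eps,\delta}\ge 0$, being the convolution of a nonnegative measure with a nonnegative kernel, so $(\phi_\alpha m_{u_n})^{\eps,\delta}\,\pa_\xi(\phi_\alpha\chi_{v_n})^{\eps,\delta}\le 0$ pointwise, and likewise after interchanging $u_n\leftrightarrow v_n$.

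For the $n$-terms and the cross term I would apply the chain rule, Theorem \ref{chain}, with the nonnegative continuous weight $\psi(\cdot):=\rho_\delta^2(\xi-\cdot)$ (for which $\sqrt{\psi(u_n)}=\rho_\delta(\xi-u_n)$ since $\rho_\delta\ge 0$): this rewrites each factor $\Div_\my\beta^{\rho_\delta^2(\xi-\cdot)}(\my,u_n)-\Div_\my(\beta^{\rho_\delta^2(\xi-\cdot)}(\my,\zeta))|_{\zeta=u_n}$ in the cross term as $\rho_\delta(\xi-u_n)\,\omega_{u_n}(\tau,\my)$, i.e.\ the one-form from $n_{u_n}$ times the scalar $\rho_\delta(\xi-u_n)$. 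Introducing the Euclidean vector $q_{u_n}^m:=h^{mi}(\my)\,\omega_{u_n,i}(\tau,\my)$ built from the symmetric square root $h$ of $g^{-1}$, one has $|\omega_{u_n}|_g^2=\delta_{ml}q_{u_n}^m q_{u_n}^l$, and the contraction $\delta_{lm}h^{mi}(\my)h^{rl}(\my')$ occurring in the cross term becomes precisely $\langle q_{u_n}(\tau,\my),q_{v_n}(\tau',\my')\rangle$, with $q_{v_n}^l:=h^{rl}(\my')\,\omega_{v_n,r}(\tau',\my')$. Writing out all three mollifications, the $n_{u_n}$-term, the $n_{v_n}$-term and the cross term then all carry the common nonnegative factor
\[
K(t,\mx,\xi,\tau,\my,\tau',\my'):=\phi_\alpha(\my)\phi_\alpha(\my')\,\rho_\eps(t-\tau,\mx-\my)\,\rho_\eps(t-\tau',\mx-\my')\,\rho_\delta(\xi-u_n(\tau,\my))\,\rho_\delta(\xi-v_n(\tau',\my')),
\]
multiplied respectively by $-|q_{u_n}(\tau,\my)|^2$, $-|q_{v_n}(\tau',\my')|^2$ and $+2\langle q_{u_n}(\tau,\my),q_{v_n}(\tau',\my')\rangle$. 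Hence their sum equals $-\int K\,|q_{u_n}(\tau,\my)-q_{v_n}(\tau',\my')|^2\,d\mu(\mx)\,d\xi\,d\tau\,d\my\,d\tau'\,d\my'\le 0$, which together with the $m$-estimate gives \eqref{eq:estimate for ueta}. (Together with Lemma \ref{lemma1}, which handles the two $\Div\Div$ terms, and an integration by parts in $\xi$, this is what produces the estimate \eqref{secondorder} of Definition \ref{def-kinetic}.)

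The main difficulty is bookkeeping: one must carry the six convolution variables through the computation and verify that all three pieces genuinely share the kernel $K$ and that $q_{u_n},q_{v_n}$ carry the square root $h$ evaluated at $\my$ and $\my'$ respectively, each vector being entirely attached to its own base point, so that the elementary identity $-|a|^2-|b|^2+2\langle a,b\rangle=-|a-b|^2$ applies \emph{exactly} rather than only in the limit --- this is precisely what the symmetric-square-root device and the index pattern $\delta_{lm}h^{mi}(\my)h^{rl}(\my')$ in the statement are engineered to achieve. A secondary point is the legitimacy of differentiating $u_n(\tau,\cdot)\in H^1$ inside $\Div_\my$, which is the content of \eqref{divbs} in the proof of Theorem \ref{chain} and holds for a.e.\ $(\tau,\my)$; and one must check that the boundary contributions at $\xi=0$ in the various integrations by parts vanish, which again uses $\supp\omega_1\subseteq(-1,0)$.
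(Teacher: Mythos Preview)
Your proposal is correct and follows essentially the same route as the paper: compute $\pa_\xi(\phi_\alpha\chi_{v_n})^{\eps,\delta}=-(\phi_\alpha\delta(\cdot-v_n))^{\eps,\delta}$ on $\{\xi\ge 0\}$ via the support condition on $\omega_1$, discard the $m$-contributions by sign, apply the chain rule (Theorem~\ref{chain}) with $\psi=\rho_\delta^2(\xi-\cdot)$ to factor out $\rho_\delta(\xi-u_n)$ and $\rho_\delta(\xi-v_n)$, and pass to Euclidean vectors $q_{u_n}=h\cdot\omega_{u_n}$ so that the three surviving pieces combine into $-\int K\,|q_{u_n}-q_{v_n}|^2\le 0$. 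The paper organizes the final step as the inequality $|X_{u_n}|^2+|X_{v_n}|^2\ge 2\langle X_{u_n},X_{v_n}\rangle$ rather than as a perfect square, but this is the same argument.
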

\begin{proof} 
To begin with, a straightforward calculation using \eqref{integralofchi} shows that
\begin{equation}
\label{derivative of chi}
\pa_\xi(\phi_\alpha \chi_u)^{\eps,\delta}=\phi_\alpha^\eps(\mx)\rho_\delta(\xi)-(\phi_\alpha \delta(\xi-u))\star \rho_{\eps,\delta}.
\end{equation}
Therefore,  $(\phi_\alpha (m_{u_n}+n_{u_n}))^{\eps,\delta} \pa_{\xi} (\phi_\alpha \chi_{v_n})^{\eps,\delta}$ splits into the following terms:
\begin{equation}\label{eq:estimate n and m}
\begin{split}
&\int_{\R^{d}}\int_0^\infty (\phi_\alpha (m_{u_n}+n_{u_n}))^{\eps,\delta} \pa_{\xi} (\phi_\alpha \chi_{v_n})^{\eps,\delta} d\mu(\mx) d\xi =\\
&\qquad\qquad\qquad\int_{\R^{d}}\int_0^\infty \phi_\alpha^\eps(\mx) \rho_\delta(\xi) \left(\phi_\alpha(n_{u_n}+m_{u_n}) \right)^{\eps,\delta}d\mu d\xi \\
&\qquad\qquad\qquad\qquad\qquad-\int_{\R^{d}}\int_0^\infty (\phi_\alpha n_{u_n})^{\eps,\delta} (\phi_\alpha \delta(\xi-u_n))^{\eps,\delta} d\mu d\xi  \\
&\qquad\qquad\qquad\qquad\qquad\qquad-\int_{\R^{d}}\int_0^\infty (\phi_\alpha m_{u_n})^{\eps,\delta} (\phi_\alpha \delta(\xi-u_n))^{\eps,\delta} d\mu d\xi  \\
&\le \int_{\R^{d}}\int_0^\infty \phi_\alpha^\eps(\mx) \rho_\delta(\xi) \left(\phi_\alpha(n_{u_n}+m_{u_n}) \right)^{\eps,\delta}d\mu d\xi\\
&\qquad\qquad\qquad\qquad\qquad\qquad -\int_{\R^{d}}\int_0^\infty (\phi_\alpha n_{u_n})^{\eps,\delta} (\phi_\alpha \delta(\xi-u_n))^{\eps,\delta} d\mu d\xi
\end{split}
\end{equation}
by positivity of $m_{u_n}$.  First note that the first term is zero since $\rho_\delta $ is supported in $(-1,0)$.

We now look at the second term. 
By definition of $n_{u_n}$ (see \eqref{eq:def n})
\begin{align*}
&\int_{\R^{d}}\int_0^\infty (\phi_\alpha n_{u_n})^{\eps,\delta} (\phi_\alpha \delta(\xi-u_n))^{\eps,\delta} d\mu d\xi \\
&= \int_{\R^{3d+2}}\int_0^\infty \rho_\delta(\xi-u_n(\tau,\my))\rho_\delta(\xi-u_n(\tau',\my'))\rho_\eps(t-\tau,\mx-\my)\rho_\eps(t-\tau',\mx-\my')\times \\
& \times \phi_\alpha(\my)\phi_\alpha(\my')
\left|\Div(\beta(\my,u_n(\tau,\my)))-\Div(\beta(\my,\zeta))\Big|_{\zeta=u_n(\tau,\my)} \right|^2_{g(\my)} d\my d\tau d\my' d\tau' d\mu d\xi.
\nonumber
\end{align*} 
Defining the vector field $X_{u_n}$ on $\R^d$ by
\begin{equation*}
X^i_{u_n}(t,\my):=h^{ij}(\my)\left( \Div(\beta(\my,u_n(t,\my))-\Div(\beta(\my,\zeta))\Big|_{\zeta=u_n(t,\my)} \right)_j
\end{equation*} we see that
\begin{equation*}
\left|\Div(\beta(\my,u_n(t,\my))-\Div(\beta(\my,\zeta))\Big|_{\zeta=u_n(t,\my)} \right|^2_{g(\my)}=\left|X_{u_n}(t,\my)\right|^2_e,
\end{equation*}
where $|.|_e$ denotes the Euclidean norm on $\R^d$. So, using 
$|X_{u_n}(t,\my)|_e^2+|X_{v_n}(t',\my')|_g^2\geq 2 \delta_{ij} X_{u_n}^i(t,\my) X_{u_n}^j(t,\my')$ and the chain rule \eqref{crule} 
(which holds since $u_n,v_n$ are sufficiently regular for all $n$) we see that
\begin{align} \label{useofchainrule}
&-\int_{\R^{d}}\int_0^\infty (\phi_\alpha n_{u_n})^{\eps,\delta} (\phi_\alpha \delta(\xi-v_n))^{\eps,\delta}+ (\phi_\alpha n_{v_n})^{\eps,\delta} 
(\phi_\alpha \delta(\xi-u_n))^{\eps,\delta}d\mu d\xi \leq \\
& -2\int_{\mathbb{R}^{3d+2}}\int_0^\infty \delta_{lm} h^{mi}(\my)h^{rl}(\my')\phi_\alpha(\my)\phi_\alpha(\my') \rho_{\eps}(t-\tau,\mx-\my)
\rho_{\eps}(t-\tau',\mx-\my')\times \nonumber \\ 
&\times \left(\Div_{\my'} \left(\beta^{\rho^2_{\delta}(\xi-.)}(\my',v_n(\tau',\my'))\right)-
\left(\Div_{\my'}\left(\beta^{\rho^2_{\delta}(\xi-.)}(\my',\zeta)\right)\right)|_{\zeta=v_n(\tau',\my')}\right)_{r}\times \nonumber\\
&\times\left(\Div_{\my}\left(\beta^{\rho^2_{\delta}(\xi-.)}(\my,u_n(\tau,\my))\right) \right. \nonumber \\
& \qquad\qquad\qquad\qquad- \left.\left(\Div_{\my}
\left(\beta^{\rho^2_{\delta}(\xi-.)}(\my,\zeta)\right)\right)|_{\zeta=u_n(\tau,\my)}\right)_{i}d(\my,\tau,\my',\tau',\mu,\xi). \nonumber
\end{align} 
This concludes the proof.
\end{proof}

 From this we conclude that condition \eqref{secondorder} is fulfilled. 

\begin{lemma} 
\label{lemma:secondorder}
Under the assumptions of the previous Lemma the limits satisfy the estimate \eqref{secondorder}.
\end{lemma}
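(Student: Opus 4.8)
The plan is to read off \eqref{secondorder} by combining the two estimates already established: Lemma~\ref{lemma1}, which expresses the sum of the two $\Div\Div$-terms in \eqref{secondorder} as $2$ times a ``mixed'' $\beta$-integral plus a manifestly harmless term, and Lemma~\ref{lemma:estimate for ueta}, in which exactly that same mixed $\beta$-integral occurs with the opposite sign next to the two measure-terms; adding the two relations makes the $\beta$-integrals cancel. Fix two tuples $(\chi_u,u_0,m_u,n_u)$ and $(\chi_v,v_0,m_v,n_v)$ obtained as in Lemma~\ref{subseqconverge} along one common subsequence $\eta_n$, write $u_n=u_{\eta_n}$, $v_n=v_{\eta_n}$, and localize as in \eqref{bartilde}, so that we may take $M=\R^d$ with $\phi_\alpha$ supported in a single chart; all identities below are first established at the level of the approximations, for fixed $\eps,\delta$, and only afterwards passed to the limit.

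First I would match the two measure-terms of \eqref{secondorder} with the first two terms of \eqref{eq:estimate for ueta}. Since $\phi_\alpha$ does not depend on $\xi$ one has $(\phi_\alpha\,\pa_\xi\mu)^{\eps,\delta}=\pa_\xi\big((\phi_\alpha\mu)^{\eps,\delta}\big)$ for every measure $\mu$, so integrating by parts in $\xi$ over $[0,\infty)$ and using \eqref{derivative of chi} (noting that the $\phi_\alpha^\eps\rho_\delta(\xi)$-contribution integrates to zero because $\supp\rho_\delta\subseteq(-\delta,0)$, exactly as in the proof of Lemma~\ref{lemma:estimate for ueta}) rewrites
\[
\int(\phi_\alpha\,\pa_\xi(m_{u_n}+n_{u_n}))^{\eps,\delta}(\phi_\alpha(1-\chi_{v_n}))^{\eps,\delta}
-\int(\phi_\alpha\,\pa_\xi(m_{v_n}+n_{v_n}))^{\eps,\delta}(\phi_\alpha\chi_{u_n})^{\eps,\delta}
\]
as the first two terms of \eqref{eq:estimate for ueta} plus two boundary contributions at $\xi=0$. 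The one coming from the first summand equals $-\int(\phi_\alpha(m_{u_n}+n_{u_n}))^{\eps,\delta}(\phi_\alpha(1-\chi_{v_n}))^{\eps,\delta}\big|_{\xi=0}\le 0$ by non-negativity of the measures and of $\phi_\alpha$, and is simply dropped in the upper bound; the one from the second summand is non-negative, and I would dispose of it using the support property of $\omega_1$ and \eqref{eq:boundedness of measures}, which forces $m_v+n_v$ to put no mass on $\{\xi=0\}$ and renders this contribution negligible once the limits $n\to\infty$ and $\delta\to 0$ are taken.

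By \eqref{est-L1} the two $\Div\Div$-terms of \eqref{secondorder}, at level $n$, equal $2$ times the mixed $\beta$-integral plus $\int_{\R^d}\int_0^\infty G\,(1-\chi_{v_n}^{\eps,\delta})\chi_{u_n}^{\eps,\delta}\,d\mu\,d\xi$ with $G$ bounded and independent of $n,\eps,\delta$, up to an error that vanishes upon letting $n\to\infty$, then $\delta\to0$, then $\eps\to0$. The mixed $\beta$-integral here is literally the one occurring in \eqref{eq:estimate for ueta}, so adding this identity, the identity for the measure-terms above, and the inequality \eqref{eq:estimate for ueta} itself (the first two terms of \eqref{eq:estimate for ueta} plus twice the $\beta$-integral being $\le 0$), the $\beta$-integrals cancel and one is left, for fixed $\eps,\delta$ and along the subsequence, with
\[
[\mathrm{LHS}]_n\ \le\ \int_{\R^d}\!\int_0^\infty G\,(1-\chi_{v_n}^{\eps,\delta})\chi_{u_n}^{\eps,\delta}\,d\mu\,d\xi\ +\ o(1),
\]
where $[\mathrm{LHS}]_n$ is the sum of the four terms of \eqref{secondorder} evaluated at level $n$ and $o(1)$ collects the Friedrichs-type error of \eqref{est-L1} together with the non-negative $\xi=0$ boundary term.

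It remains to pass to the limit. For fixed $\eps,\delta$, as $n\to\infty$ the convolved measures $(\phi_\alpha(m_{u_n}+n_{u_n}))^{\eps,\delta}$ converge uniformly on compacta (with all derivatives) by weak-$\star$ convergence of $m_{u_n},n_{u_n}$ tested against the smooth compactly supported kernel, while $\chi_{u_n}^{\eps,\delta}\to\chi_u^{\eps,\delta}$ pointwise and boundedly (cf.\ \eqref{ae-conv}); hence by dominated convergence $[\mathrm{LHS}]_n$ converges to the expression in \eqref{secondorder} built from the limit tuples, and the right-hand side to $\int G\,(1-\chi_v^{\eps,\delta})\chi_u^{\eps,\delta}\,d\mu\,d\xi$. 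Letting then $\delta\to0$ and $\eps\to0$, using $\chi_u^{\eps,\delta}\to\chi_u$ a.e., $|G|\le C$ and $(1-\chi_v)\chi_u\ge 0$, yields $\limsup_{\eps\to0}\limsup_{\delta\to0}[\mathrm{LHS}]\le C\int_{[0,t)\times M\times[0,\infty)}\chi_u(1-\chi_v)\,dt\,d\mu\,d\xi$, which is \eqref{secondorder} with $C_\alpha=C$; summing over the finitely many charts $\alpha$ finishes the proof. The delicate point throughout is the bookkeeping of the three nested limits $n\to\infty$, $\delta\to0$, $\eps\to0$: one must check that the ``$\approx$''-errors of Lemma~\ref{lemma1} and the $\xi=0$ boundary term are both negligible precisely in this order, and that replacing the level-$n$ objects of Lemmas~\ref{lemma1} and \ref{lemma:estimate for ueta} by the limit measures appearing in \eqref{secondorder} is legitimate.
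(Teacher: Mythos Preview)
Your overall plan is exactly the paper's: add the identity of Lemma~\ref{lemma1} to the inequality of Lemma~\ref{lemma:estimate for ueta} so that the mixed $\beta$-integrals cancel, convert the measure terms to the form appearing in \eqref{secondorder} via integration by parts in $\xi$, and pass to the limit $n\to\infty$, then $\delta\to0$, then $\eps\to0$. The bookkeeping of the limits and the use of \eqref{ae-conv} and dominated convergence are fine.

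There is, however, a genuine gap in your treatment of the $\xi=0$ boundary term with the bad (non-negative) sign,
\[
\int_{[0,t)\times\R^d}\big(\phi_\alpha(m_{v}+n_{v})\big)^{\eps,\delta}(t,\mx,0)\,(\phi_\alpha\chi_{u})^{\eps,\delta}(t,\mx,0)\,d\mu\,dt.
\]
You propose to dispose of it ``using the support property of $\omega_1$ and \eqref{eq:boundedness of measures}, which forces $m_v+n_v$ to put no mass on $\{\xi=0\}$''. But absence of an atom at $\xi=0$ is not enough: $(\phi_\alpha(m_v+n_v))^{\eps,\delta}(\cdot,0)$ picks up the mass of the limit measure on $(0,\delta)$ weighted by $\rho_\delta\sim\delta^{-1}$, and \eqref{eq:boundedness of measures} only yields $\int_0^\delta\rho_\delta(-\eta)\nu(\eta)\,d\eta\le\|\nu\|_\infty$, which is merely bounded, not vanishing, as $\delta\to 0$.

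The paper closes this gap with a different idea: since the limit tuple satisfies the kinetic equation \eqref{kin-eq} (hence \eqref{kinetic-loc}), one reads off that $\pa_\xi\big(\phi_\alpha(n_v+m_v)\big)^{\eps}\in L^\infty_{\mathrm{loc}}$, so $(\phi_\alpha(n_v+m_v))^{\eps}$ is locally Lipschitz in $\xi$. Together with the support property (the measure is supported in $\xi\ge 0$, hence $(\phi_\alpha(n_v+m_v))^{\eps}(t,\mx,\xi)=0$ for $\xi<0$), continuity forces $(\phi_\alpha(n_v+m_v))^{\eps}(t,\mx,0)=0$ (cf.\ \eqref{numu0}--\eqref{eq:mnatzero}), and then the $\delta$-mollified boundary term goes to zero as $\delta\to 0$. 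This use of the kinetic equation for the \emph{limit} measure is the missing ingredient in your argument; once you insert it, the rest of your proof goes through as written.
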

\begin{proof} As before, due to the presence of the cut-off functions $\phi_\alpha$, 
we may without loss of generality suppose that $M=\R^d$.  
We first calculate
\begin{multline} \label{new2}
\int_{\R^{d}}\int_0^\infty (\phi_\alpha (m_{u}+n_{u}))^{\eps,\delta} \pa_{\xi} (\phi_\alpha \chi_{v})^{\eps,\delta} d\mu(\mx) d\xi =\\
-\int_{\R^{d}}\int_0^\infty (\phi_\alpha (m_{u}+n_{u}))^{\eps,\delta} \pa_{\xi} 
(\phi_\alpha (1-\chi_{v}))^{\eps,\delta} d\mu(\mx) d\xi\\
=
-\int_{\R^{d}}(\phi_\alpha (m_{u}+n_{u}))^{\eps,\delta}(t,\mx,0) 
(\phi_\alpha (1- \chi_{v}))^{\eps,\delta} d\mu(\mx) d\xi \hfill  \\ 
+
\int_{\R^{d}}\int_0^\infty \pa_\xi(\phi_\alpha (m_{u}+n_{u}))^{\eps,\delta} (\phi_\alpha (1-\chi_{v}))^{\eps,\delta} d\mu(\mx) d\xi. 
\end{multline} 
Next, note that $(\phi_\alpha (n_u+m_u))^\eps$ is continuous (and even locally Lipschitz) in $\xi$ since by assumption $(\chi_u,u_0,m_u,n_u)$ satisfies \eqref{kin-eq}, hence \eqref{kinetic-loc}, which implies that $\pa_\xi(\phi_\alpha (n_u+m_u))^\eps$ will be in $L^\infty_\text{loc}([0,\infty)\times \R^d \times \R)$. Thus
\begin{equation*}
\int_{\R^{d}}\int_0^\infty \phi_\alpha^\eps(\mx) \rho_\delta(\xi) \left(\phi_\alpha(n_{u}+m_{u}) \right)^{\eps,\delta}d\mu d\xi \to \int_{\R^{d}} \phi_\alpha^\eps(\mx) \left(\phi_\alpha(n_{u}+m_{u}) \right)^{\eps}(\mx,0)d\mu 
\end{equation*}
as $\delta \to 0$. Now, for any the estimate \eqref{eq:boundedness of measures} (and $\nu(\xi)=0$ for $\xi <0$) shows that the measure $\int_{\mathbb{R}^+\times M} (n_u+m_u)(t,\mx,\xi) dt d\mu$ is supported in $[0,\infty)$ hence by positivity $(n_u+m_u)(t,\mx,\xi)$ is supported in $[0,\infty)\times M \times [0,\infty)$. 
But this implies $(n_u+m_u)^{\eps}(t,\mx,\xi)=0$ on $[0,\infty) \times M$ for any $\xi <0$. 
Thus 
\begin{equation}\label{numu0}
(n_u+m_u)^{\eps}(t,\mx,0)=0
\end{equation} 
on $[0,\infty) \times M$ by continuity, so 
\begin{equation} \label{eq:mnatzero}\int_0^t \int_{\R^{d}} \phi_\alpha^\eps(\mx) \left(\phi_\alpha(n_{u}+m_{u}) \right)^{\eps}(\tau,\mx,0)d\mu d\tau=0.
\end{equation} 
Since $0\leq 1-\chi_v \leq 1$ (and $m_u, n_u$ and $\phi_\alpha $ are non-negative) this immediately implies that the first term in \eqref{new2} must converge to zero as $\delta \to 0$ as well.

Thus,
\begin{align*}
 &\limsup_{\eps \to 0}\limsup_{\delta \to 0} 
\int_{\R^{d}}\int_0^\infty(\phi_\alpha (m_{u}+n_{u}))^{\eps,\delta} \pa_{\xi} (\phi_\alpha \chi_{v})^{\eps,\delta} d\mu(\mx) d\xi \\
& \qquad\qquad= \limsup_{\eps \to 0}\limsup_{\delta \to 0}
\int_{\R^{d}}\int_0^\infty\pa_\xi(\phi_\alpha (m_{u}+n_{u}))^{\eps,\delta} (\phi_\alpha (1-\chi_{v}))^{\eps,\delta} d\mu(\mx) d\xi.
\end{align*}
Combining this with Lemma \ref{lemma1} and Lemma \ref{lemma:estimate for ueta} and letting $n \to \infty$ (keeping in mind \eqref{eq:boundedness of measures}) gives the claim. \end{proof}

From the above, we see that the following theorem holds.

\begin{theorem}
\label{main-thm} Denote by ${\cal F}$ the set of all tuples $(\chi_u, u_0,m_u, n_u)$ obtained as the weak limits along subsequences as in Lemma \ref{subseqconverge} of appropriate terms from the vanishing viscosity approximation \eqref{VV} with initial condition \eqref{ic}.
Then ${\cal F}$ satisfies the conditions from Definition \ref{def-kinetic}.
\end{theorem}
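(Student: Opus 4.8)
The plan is to verify that the set $\mathcal F$ built from vanishing-viscosity limits satisfies conditions (i) and (ii) of Definition \ref{def-kinetic}, since the structural requirements on the tuples $(\chi,u_0,m,n)$ -- measurability of $\chi$, compact support and monotonicity in $\xi$, non-negativity of the Radon measures -- follow immediately from the construction: each $\chi_{u_n}$ is of the form ${\rm sgn}_+(u_n-\xi)$ with $0\le u_n\le 1$ by Theorem \ref{t1}, hence lies in $[0,1]$, is non-increasing in $\xi$, and is supported in $[0,1]$ uniformly; these properties are preserved under weak-$\star$ limits, and $n_{u_n},m_{u_n}\ge 0$ pass to non-negative Radon measures by Lemma \ref{subseqconverge} together with the uniform bound \eqref{energy-m} (or \eqref{eq:boundedness of measures}).

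First I would invoke Lemma \ref{subseqconverge} directly: it states that along a subsequence $\eta_n\to 0$ the triples $\chi_{u_{\eta_n}},n_{\eta_n},m_{\eta_n}$ converge to $\chi_u,n_u,m_u$ and that the limit satisfies the kinetic equation \eqref{kin-eq} together with the initial condition \eqref{kin-ic}. This is precisely condition (i) of Definition \ref{def-kinetic}, so (i) holds for every tuple in $\mathcal F$. It remains to check condition (ii), the second-order estimate \eqref{secondorder}, for any two tuples $(\chi_u,u_0,m_u,n_u)$ and $(\chi_v,v_0,\tilde m_v,\tilde n_v)$ obtained as such limits. Here I would fix, once and for all, a finite atlas $(U_\alpha,\psi_\alpha)_{\alpha=1}^k$ of the compact manifold $M$ together with smooth functions $\phi_\alpha\in\mathcal D(U_\alpha)$ with $\sum_\alpha\phi_\alpha^2=1$; this choice is independent of the tuples, as required.

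The core of the argument is Lemma \ref{lemma:secondorder}, which already establishes \eqref{secondorder} for precisely the limits produced in Lemma \ref{subseqconverge}: combining Lemma \ref{lemma1} (which identifies the $\Div\Div$-contributions on the left of \eqref{secondorder} with the ``squared-divergence of $\beta$'' cross term up to a term $\int G\,\chi(1-\tilde\chi)$ with $G$ bounded) and Lemma \ref{lemma:estimate for ueta} (which shows the $\partial_\xi(m+n)$-contributions plus that same cross term are $\le 0$, using the chain rule \eqref{crule} and positivity of the measures), and then passing $\delta\to 0$, $\eps\to 0$, $n\to\infty$ in the right order, yields \eqref{secondorder} with constant $C_\alpha$ coming from the sup of $|G|$ on the (fixed) support of $\phi_\alpha$. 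Thus the proof of Theorem \ref{main-thm} is essentially a bookkeeping step: assemble conditions (i) and (ii) from the lemmas just established, noting that condition (ii) must hold for \emph{pairs} of tuples but the estimate in Lemma \ref{lemma:secondorder} is symmetric in the two arguments and its constants depend only on the fixed atlas, the metric, and the coefficients $a=A'_\mx$, never on $n$, $\eps$, or $\delta$.

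The main obstacle -- already absorbed into the preceding lemmas, but worth flagging -- is making the interchange of the three limits $n\to\infty$, $\delta\to0$, $\eps\to0$ rigorous: the Friedrichs-type commutator estimates of Lemma \ref{friedrichs} are what allow the Christoffel-symbol and metric-coefficient terms that sit inside a convolution on one side and outside on the other to cancel in the limit, and the weak-$\star$ convergence of the kinetic functions only gives convergence of the \emph{regularized} quantities $\chi_{u_n}^{\eps,\delta}\to\chi_u^{\eps,\delta}$ pointwise for fixed $\eps,\delta$, so one genuinely needs $n\to\infty$ first. For Theorem \ref{main-thm} itself, the only additional point to check is that the limiting $\chi_u$ indeed has the measurability and support properties demanded of elements of $\mathcal F$ -- but, as noted, these are inherited from the approximants $\chi_{u_n}$ and stable under the weak limit, so no new work is required.
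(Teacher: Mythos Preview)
Your proposal is correct and follows essentially the same approach as the paper: the paper's own proof is a two-sentence bookkeeping step that cites Lemma \ref{subseqconverge} for condition (i) and Lemma \ref{lemma:secondorder} for condition (ii). Your write-up is simply a more detailed unpacking of the same chain of lemmas, with the additional (and welcome) remark that the structural properties of the tuples are inherited from the approximants.
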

\begin{proof}
That such limits satisfy \eqref{kin-eq}, \eqref{kin-ic} is part of the statement of Lemma \ref{subseqconverge}, while relation \eqref{secondorder} follows from Lemma \ref{lemma:secondorder}.
\end{proof}
 As a direct consequence we obtain the following result on the uniqueness of entropy solutions:
\begin{corollary} Let $u$, $v$ be entropy solutions of \eqref{main-eq}, \eqref{ic}. Then $u=v$.
\end{corollary}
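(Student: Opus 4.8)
The plan is to realize each entropy solution as an element of a kinetically admissible set and then to quote the uniqueness Theorem~\ref{uniq-k}. Let $u$ and $v$ be entropy solutions of \eqref{main-eq}, \eqref{ic} with the common initial datum $u_0$. Let $m$ and $\tilde m$ be the non-negative measures associated with $u$ and $v$ through Definition~\ref{def-entropy}(ii), and let $n$ and $\tilde n$ be the non-negative measures defined by \eqref{eq:def n} for $u$ and $v$ respectively. I claim that the two-element set
$$
\mcf := \{(\chi_u,u_0,m,n),\ (\chi_v,u_0,\tilde m,\tilde n)\}
$$
(equivalently, the vanishing-viscosity set of Theorem~\ref{main-thm} enlarged by these two tuples) is kinetically admissible. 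Granting this, Theorem~\ref{uniq-k} applies: since the two tuples share the same initial datum, it forces $\chi_u=\chi_v$ a.e., and then $u=v$ a.e.\ because $\chi_u(t,\mx,\xi)={\rm sgn}_+(u(t,\mx)-\xi)$ and likewise for $v$ (as in \eqref{***}).

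It remains to verify the two conditions of Definition~\ref{def-kinetic}. Condition (i) is essentially contained in the notion of an entropy solution: $\chi_u$ maps into $[0,1]$, is non-increasing in $\xi\in[0,\infty)$, and is compactly supported in $\xi$ uniformly in $(t,\mx)$ since $0\le u\le 1$; rewriting the entropy equality \eqref{entr-ineq} in kinetic form exactly as in the passage from \eqref{entr-ineq} to \eqref{kinetic} — multiply by $S'$, use \eqref{integralofchi}, and regard $S'$ as a test function in $\xi$ — yields \eqref{kin-eq} for $(\chi_u,u_0,m,n)$, while \eqref{kin-ic} expresses that $u$ attains $u_0$ in the weak sense built into \eqref{ic}. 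The same holds for $(\chi_v,u_0,\tilde m,\tilde n)$.

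Condition (ii), the estimate \eqref{secondorder} for the pair $\{(\chi_u,\dots),(\chi_v,\dots)\}$, is the crux, and I would obtain it by repeating the computations of Lemmas~\ref{lemma1}, \ref{lemma:estimate for ueta} and \ref{lemma:secondorder} verbatim, with the fixed entropy solutions $u,v$ in place of the vanishing-viscosity nets $u_n,v_n$. This is legitimate because those three lemmas never use more about $u_n,v_n$ than: the bound $0\le\chi_{u_n}\le 1$ with compact $\xi$-support; the $L^2$-regularity \eqref{L2-reg}; the chain rule \eqref{crule} in $L^2(\R^+\times M)$; non-negativity of the measures (only positivity of $m$, not any explicit form of it, enters the splitting \eqref{eq:estimate n and m}, whereas $n$ keeps the explicit form \eqref{eq:def n}); the kinetic equation \eqref{kin-eq} (used to deduce $\pa_\xi(\phi_\alpha(n+m))^\eps\in L^\infty_{\mathrm{loc}}$ and hence \eqref{numu0}); and the mass bound \eqref{eq:boundedness of measures}, which for an entropy solution follows from \eqref{entr-ineq} by the same limiting choice $S(u)={\rm sgn}_+(\xi)(u-\xi)_+$ used just before Lemma~\ref{lemma:estimate for ueta}. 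Each of these is precisely what Definition~\ref{def-entropy} grants. Moreover, the repeated passage ``$n\to\infty$'' in Lemma~\ref{lemma1} served only to transport the Friedrichs-lemma cancellations from the regularized weak limit back to the approximating nets; with a single entropy solution there is nothing to transport, and Lemma~\ref{friedrichs} applies directly to $\phi_\alpha\chi_u$ and $\phi_\alpha\chi_v$ (bounded and compactly supported, hence in $L^1$), so the relations denoted $\approx$ there now hold upon letting $\delta\to 0$ and then $\eps\to 0$. The main obstacle is thus the careful bookkeeping of this last step — confirming line by line that the whole chain of identities in Lemmas~\ref{lemma1}–\ref{lemma:secondorder} goes through under the hypotheses of Definition~\ref{def-entropy} alone, in particular that the chain rule is only ever needed in the $L^2$-form \eqref{crule} built into Definition~\ref{def-entropy}(iii). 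Once this is settled, $\mcf$ is kinetically admissible and Theorem~\ref{uniq-k} delivers $u=v$.
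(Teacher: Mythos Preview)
Your proposal is correct and follows essentially the same route as the paper: form a kinetically admissible set from the entropy solutions, verify \eqref{kin-eq}--\eqref{kin-ic} via the kinetic reformulation of \eqref{entr-ineq}, and obtain \eqref{secondorder} by rerunning Lemmas~\ref{lemma1}--\ref{lemma:secondorder} with constant sequences, the key observation being that the only place where extra regularity of the approximants is used is the chain rule \eqref{useofchainrule}, which Definition~\ref{def-entropy}(iii) supplies directly. Your write-up is in fact more explicit than the paper's in flagging the auxiliary ingredients (the mass bound \eqref{eq:boundedness of measures}, the continuity argument behind \eqref{numu0}, and the role of the $n\to\infty$ passage), but the underlying strategy is identical.
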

\begin{proof} We do this by showing that the set $\mathcal{F}$ consisting of all entropy solutions is kinetically admissible. From this, uniqueness of entropy solutions follows from Theorem \ref{uniq-k}.
	
	As was shown in Section \ref{diffprelsec}, the kinetic functions $\chi_u$, $\chi_v$ corresponding
to $u$, $v$ satisfy the Cauchy problem \eqref{kin-eq}, \eqref{kin-ic}. It remains to show \eqref{secondorder}. But this follows as in Lemmas  \ref{lemma1} to \ref{lemma:secondorder} by replacing the sequences there with the constant sequences $\chi_u,n_u,m_u$ and $\chi_v,n_v,m_v$: Note that the only place where the higher regularity of the $u_{n}$ enters is in the use of the chain rule in \eqref{useofchainrule}, which entropy solutions have to satisfy by definition.
\end{proof}

The final theorem of the paper establishes existence of the entropy admissible solutions to \eqref{main-eq}, \eqref{ic}.

\begin{theorem}
There exists a function $u: [0,\infty)\times M \to [0,1]$ satisfying the conditions of Definition \ref{def-entropy}. 
It is obtained as the strong $L^1_{loc}([0,\infty)\times M)$ limit of the functions $(u_\eta)$ obtained as the solution to \eqref{VV}, \eqref{ic}.
\end{theorem}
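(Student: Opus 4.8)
The plan is to realize $u$ as the function attached, via Theorem~\ref{uniq-k}, to the vanishing-viscosity kinetic limit, and then to check the three requirements of Definition~\ref{def-entropy} by passing to the limit in the identities already available for the approximations $u_\eta$ from Theorem~\ref{t1}. Fix a sequence $\eta_n\to0$; by Lemma~\ref{subseqconverge} a subsequence (not relabelled) satisfies $\chi_{u_{\eta_n}}\to\chi_u$ weak-$\star$ in $L^\infty$ and $n_{\eta_n}\rightharpoonup n_u$, $m_{\eta_n}\rightharpoonup m_u$ weakly as Radon measures, with $(\chi_u,u_0,m_u,n_u)\in\mathcal F$ by Theorem~\ref{main-thm}. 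By Theorem~\ref{uniq-k}, $\chi_u={\rm sgn}_+(u-\xi)$ for $u:=\int_0^\infty\chi_u\,d\xi\in L^\infty([0,\infty)\times M;[0,1])$, and $\chi_u$ (hence $u$) does not depend on the subsequence, so $\chi_{u_\eta}\to\chi_u$ weak-$\star$ along the whole net. Since $0\le u_\eta,u\le1$, the functions $\chi_{u_\eta},\chi_u$ are $\{0,1\}$-valued and supported in $\xi\in[0,1]$, hence on every compact set
\[
\int|\chi_{u_\eta}-\chi_u|^2=\int\chi_{u_\eta}-2\int\chi_{u_\eta}\chi_u+\int\chi_u\longrightarrow0,
\]
using $\chi_{u_\eta}^2=\chi_{u_\eta}$, $\chi_u^2=\chi_u$ and testing the weak-$\star$ convergence against $1$ and against $\chi_u\in L^1_{\mathrm{loc}}$. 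Integrating in $\xi\in[0,1]$ gives $u_\eta\to u$ in $L^1_{\mathrm{loc}}([0,\infty)\times M)$ (and in every $L^p_{\mathrm{loc}}$, $p<\infty$, by boundedness), which is the asserted mode of convergence.

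It remains to verify Definition~\ref{def-entropy}. Put $w_\eta:=\Div(\beta(\mx,u_\eta))-\Div(\beta(\mx,\xi))|_{\xi=u_\eta}$; by \eqref{divbs} this equals $(\sigma_\mx^\top)^j_i(u_\eta)\pa_ju_\eta$, and by the energy identity \eqref{energy-m} through the measure $n_\eta$ (cf.\ \eqref{n-eta}) the family $(w_\eta)$ is bounded in $L^2([0,\infty)\times M)$ uniformly in $\eta$. On the other hand $\beta(\mx,u_\eta)\to\beta(\mx,u)$ and $\Div(\beta(\mx,\xi))|_{\xi=u_\eta}\to\Div(\beta(\mx,\xi))|_{\xi=u}$ in $L^1_{\mathrm{loc}}$ (continuity of $\beta$ and of its $\mx$-derivatives together with the strong convergence of $u_\eta$), so $w_\eta\to w:=\Div(\beta(\mx,u))-\Div(\beta(\mx,\xi))|_{\xi=u}$ in $\mathcal D'$; with the uniform $L^2$-bound this gives $w\in L^2([0,\infty)\times M)$ — condition (i) — and $w_\eta\rightharpoonup w$ weakly in $L^2$. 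For the chain rule (iii), fix a non-negative $\psi\in C(\R)$; for each $\eta$, \eqref{crule} holds for $u_\eta$ by Theorem~\ref{chain} (indeed both sides equal $\sqrt{\psi(u_\eta)}\,w_\eta$ by \eqref{divbs}). As $\eta\to0$, the left side $\Div(\beta^\psi(\mx,u_\eta))-\Div(\beta^\psi(\mx,\xi))|_{\xi=u_\eta}$ converges in $\mathcal D'$ to $\Div(\beta^\psi(\mx,u))-\Div(\beta^\psi(\mx,\xi))|_{\xi=u}$ as above, while on the right $\sqrt{\psi(u_\eta)}\to\sqrt{\psi(u)}$ strongly in $L^2_{\mathrm{loc}}$ and in $L^\infty$, and $w_\eta\rightharpoonup w$ weakly in $L^2$, whence $\sqrt{\psi(u_\eta)}\,w_\eta\rightharpoonup\sqrt{\psi(u)}\,w$ in $L^1_{\mathrm{loc}}$; identifying the two limits yields \eqref{crule} for $u$ in $L^2$.

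For condition (ii) I pass to the limit in \eqref{new} (equivalently \eqref{app-ec}), which holds for $u_\eta$ and every $S\in C^2(\R)$ with $S(0)=0$ by the computation behind Theorem~\ref{thm1} (the case $S\in C^2([0,\infty))$, $S(0)\ne0$, reduces to this by $C^2$-extension and subtraction of a constant, which changes neither side). Since $u_\eta\to u$ in $L^1_{\mathrm{loc}}$ and the coefficients are continuous, $\pa_tS(u_\eta)$, $\Div\int_0^{u_\eta}\mff'_\mx S'\,d\xi$ and $\Div\Div\int_0^{u_\eta}A'_\mx S'\,d\xi$ converge distributionally to the corresponding expressions in $u$; $\eta\Delta S(u_\eta)\to0$ in $\mathcal D'$ since $\langle\eta\Delta S(u_\eta),\varphi\rangle=\eta\langle S(u_\eta),\Delta\varphi\rangle\to0$ by the uniform $L^\infty$-bound; and $\int_0^\infty S''(\xi)(n_\eta+m_\eta)\,d\xi\to\int_0^\infty S''(\xi)(n_u+m_u)\,d\xi$ by weak convergence of the measures tested against $S''(\xi)\varphi(t,\mx)$ (legitimate because, by \eqref{eq:boundedness of measures}, $n_\eta+m_\eta$ is supported in $\xi\in[0,1]$ uniformly in $\eta$). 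Hence $u$ satisfies $\pa_tS(u)+\Div\int_0^u\mff'_\mx S'\,d\xi=\Div\Div\int_0^u A'_\mx S'\,d\xi-\int_0^\infty S''(\xi)(n_u+m_u)\,d\xi$ for all such $S$, and setting $m:=n_u+m_u-\delta(\xi-u)|w|_g^2$ turns this into \eqref{entr-ineq}; together with the (essentially bookkeeping) recovery of the initial datum by integrating \eqref{kin-ic} in $\xi$, the only non-routine point is to show that $m$ is a non-negative measure, i.e.\ that $n_u\ge\delta(\xi-u)|w|_g^2$. This is a lower-semicontinuity statement, and it is the main obstacle: writing $|\cdot|_g=|h\,\cdot|_e$ with $h$ the smooth symmetric square root of $g$, for $0\le\Phi\in C_c([0,\infty)\times M\times[0,\infty))$ one has $\langle n_\eta,\Phi\rangle=\|\sqrt{\Phi(t,\mx,u_\eta)}\,h w_\eta\|_{L^2}^2$; the integrands converge weakly in $L^2$ to $\sqrt{\Phi(t,\mx,u)}\,h w$ (strong $L^p_{\mathrm{loc}}$-convergence of $\sqrt{\Phi(t,\mx,u_\eta)}$ against weak $L^2$-convergence of $h w_\eta$, both families $L^2$-bounded), so weak lower semicontinuity of the $L^2$-norm gives $\langle n_u,\Phi\rangle=\lim_\eta\langle n_\eta,\Phi\rangle\ge\|\sqrt{\Phi(t,\mx,u)}\,h w\|_{L^2}^2=\int\Phi(t,\mx,u)|w|_g^2\,d\mu\,dt=\langle\delta(\xi-u)|w|_g^2,\Phi\rangle$. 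Since both $n_u+m_u$ and $\delta(\xi-u)|w|_g^2$ are supported in $\xi\ge0$ (the latter because $u\ge0$), $m$ is indeed a non-negative Radon measure on $[0,\infty)\times M\times[0,\infty)$, which completes the verification; everything outside this lower-semicontinuity step is a routine limit in distributions resting on the strong $L^1_{\mathrm{loc}}$-convergence $u_\eta\to u$ and the uniform energy bound \eqref{energy-m}.
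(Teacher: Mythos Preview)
Your proof is correct and follows the same overall strategy as the paper: kinetic uniqueness (Theorem~\ref{uniq-k}) forces the weak-$\star$ limit $\chi_u$ to be a genuine kinetic function ${\rm sgn}_+(u-\xi)$, which upgrades to strong $L^1_{\mathrm{loc}}$ convergence of $u_\eta$, after which the entropy conditions are obtained by passing to the limit in the approximate identities. The paper phrases the strong-convergence step via Young measures (citing \cite{Dpe}) and defers the verification of Definition~\ref{def-entropy} entirely to \cite[Section~7]{CP}; your treatment is more self-contained on both counts, replacing the Young-measure argument by the elementary computation $\int|\chi_{u_\eta}-\chi_u|^2=\int\chi_{u_\eta}-2\int\chi_{u_\eta}\chi_u+\int\chi_u\to0$ for $\{0,1\}$-valued functions, and supplying explicitly the key lower-semicontinuity step $n_u\ge\delta(\xi-u)|w|_g^2$ (via weak $L^2$-convergence of $\sqrt{\Phi(\cdot,u_\eta)}\,hw_\eta$) that makes the residual measure $m$ non-negative.
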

\begin{proof}  From Theorem \ref{main-thm}, Lemma \ref{subseqconverge} and Theorem \ref{uniq-k} it follows that for the entire family $(u_\eta)$ (and not only a subsequence)
$$
{\rm sgn}_+(u_\eta(t,\mx)-\xi) \rightharpoonup  {\rm sgn}_+(u(t,\mx)-\xi) \ \ {\rm as} \ \ \eta\to 0 \ \ {\rm in} \ \ L^\infty([0,\infty)\times M \times [0,\infty)),
$$ 
where $u$ is defined in \eqref{***}. Indeed, according to Theorem \ref{main-thm} and Lemma \ref{subseqconverge}, any  weak-$\star$ limit of a subsequence of 
${\rm sgn}_+(u_\eta(t,\mx)-\xi)$ belongs to the family ${\cal F}$ from Definition \ref{def-kinetic}, while from Theorem \ref{uniq-k} it follows that all such limits coincide (since they correspond to the same initial value). 
This in turn means that the Young measure corresponding to the family $(u_\eta)$ is the atomic measure of the form $\delta(u(t,\mx)-\xi)$. Indeed, for any $f\in C^1(\R)$, we have (keeping in mind \eqref{***} and the fact that $ 0\leq u_\eta \leq 1$):
\begin{align*}
&f(u_\eta(t,\mx))=\int_0^{u_\eta(t,\mx)} f'(\xi)d\xi+f(0) =\int_0^1 f'(\xi) {\rm sgn}_+(u_\eta(t,\mx)-\xi) d\xi+f(0) \\
& \underset{\scriptscriptstyle\eta \to 0}{{-\!\!\!\rightharpoonup}}  
\int_0^1 f'(\xi){\rm sgn}_+(u(t,\mx)-\xi)d\xi +f(0) =\int_{\R} f(\xi) \delta(u(t,\mx)-\xi) d\xi .
\end{align*} 
From here, according to  standard properties  of Young measures \cite{Dpe}, we conclude that 
$$
u_\eta \to u \ \ \text{strongly in} \ \ L^1_{loc}([0,\infty)\times M).
$$ 
The strong convergence provides all the conditions from Definition \ref{def-entropy} (cf.\ \cite[Section 7]{CP}).
\end{proof}

{\bf Acknowledgment:} This work is supported by OEAD-project ME 07/2015-16, and FWF-project P28770 of the Austrian
Science Fund. It is also supported by the Croatian Science Foundation's project "Weak convergence methods and applications"
number 9780, and by Austrian-Montenegro bi-lateral project "Flow problems on manifolds". Melanie Graf is the recipient of a DOC Fellowship of the Austrian Academy of Sciences at the Institute of Mathematics, University of Vienna. We are indebted to the referee for several comments that have led to valuable improvements.

\renewcommand{\theequation}{A-\arabic{equation}}
  \setcounter{equation}{0}   
  \section*{Appendix}  
In this appendix we provide details of the arguments used in the proof of Lemma
\ref{lemma1}. As before, we shall suppress the $t$-dependence to simplify the presentation. Also, as in the Lemma, $\int d\xi$ is to
 be understood to in fact mean $\int_{\R^+} d\xi$.

To begin with, we show \eqref{eq:num0}, using integration by parts, \eqref{divom}, and Lemma \ref{friedrichs}, as well
as the fact that $\pa_k\sqrt{|g|}=\Gamma^s_{ks}\sqrt{|g|}$, and
\begin{equation}\label{g}
\pa_jg^{ij}= -g^{ia}\Gamma^j_{aj}-g^{jb}\Gamma^i_{jb}.
\end{equation} 
\begin{multline}
\int_{\mathbb{R}^{d+1}}\Div\Div(\tilde{\chi}_{u_\eta}a)\star\rho_{\eps,\delta}\bar{\chi}_{v_n}^{\eps,\delta}d\mu d\xi=\\
\int_{\mathbb{R}^{d+1}}(g^{ij}\pa_j\Div(\tilde{\chi}_{u_n}a)_{i})\star\rho_{\eps,\delta}\bar{\chi}_{v_n}^{\eps,\delta}d\mu d\xi-\int_{\mathbb{R}^{d+1}}\left(g^{ij}\Gamma_{ij}^{k}\Div(\tilde{\chi}_{u_n}a)_{k}\right)\star\rho_{\eps,\delta}\bar{\chi}_{v_n}^{\eps,\delta}d\mu d\xi\\
=\int_{\mathbb{R}^{2d+2}}g^{ij}(\my)\pa_{j}\Div(\tilde{\chi}_{u_n}a)_{i}(\my,\eta)\rho_{\eps,\delta}(\mx-\my,\xi-\eta)d\my d\eta \bar{\chi}_{v_n}^{\eps,\delta}(\mx,\xi)\sqrt{\left|g(\mx)\right|}d\mx d\xi d\my d\eta\\
-\int_{\mathbb{R}^{d+1}}\left(g^{ij}\Gamma_{ij}^{k}\Div(\tilde{\chi}_{u_n}a)_{k}\right)\star\rho_{\eps,\delta}\bar{\chi}_{v_n}^{\eps,\delta}d\mu d\xi\\
 =\int_{\mathbb{R}^{2d+2}}g^{ij}(\my)\Div(\tilde{\chi}_{u_n}a)_{i}(\my,\eta)\pa_{j}\rho_{\eps,\delta}(\mx-\my,\xi-\eta)d\my d\eta \bar{\chi}_{v_n}^{\eps,\delta}(\mx,\xi)\sqrt{\left|g(\mx)\right|}d\mx d\xi d\my d\eta \\
-\int_{\mathbb{R}^{2d+2}}\pa_{j}g^{ij}(\my)\Div(\tilde{\chi}_{u_n}a)_{i}(\my,\eta)\rho_{\eps,\delta}(\mx-\my,\xi-\eta)dyd\eta \bar{\chi}_{v_n}^{\eps,\delta}(x,\xi)\sqrt{\left|g(\mx)\right|}d\mx d\xi d\my d\eta\\
-\int_{\mathbb{R}^{d+1}}\left(g^{ij}\Gamma_{ij}^{k}\Div(\chi_{u}a)_{k}\right)\star\rho_{\eps,\delta}\bar{\chi}_{v_n}^{\eps,\delta}d\mu d\xi=\\
=-\int_{\mathbb{R}^{2d+2}}g^{ij}(y)\Div(\tilde{\chi}_{u_n}a)_{i}(\my,\eta)\rho_{\eps,\delta}(\mx-\my,\xi-\eta)dyd\eta\pa_{j}\bar{\chi}_{v_n}^{\eps,\delta}(\mx,\xi)\sqrt{\left|g(\mx)\right|}d\mx d\xi d\my d\eta \\
\hspace{1.3em} -\int_{\mathbb{R}^{2d+2}}g^{ij}(\my)\Div(\tilde{\chi}_{u_n}a)_{i}(\my,\eta)\rho_{\eps,\delta}(\mx-\my,\xi-\eta)d\my d\eta\bar{\chi}_{v_n}^{\eps,\delta}(\mx,\xi)\pa_{j}\sqrt{\left|g(\mx)\right|}d\mx d\xi d\my d\eta\\
+\int_{\mathbb{R}^{d+1}}\left((g^{ia}\Gamma_{aj}^{j}+g^{jb}\Gamma_{jb}^{i})\Div(\tilde{\chi}_{u_n}a)_{i}\right)\star\rho_{\eps,\delta}\bar{\chi}_{v_n}^{\eps,\delta}d\mu d\xi\\
-\int_{\mathbb{R}^{d+1}}\left(g^{ij}\Gamma_{ij}^{k}\Div(\tilde{\chi}_{u_n}a)_{k}\right)\star\rho_{\eps,\delta}\bar{\chi}_{v_n}^{\eps,\delta}d\mu d\xi\\
=\int_{\mathbb{R}^{d+1}}\left(g^{ij}\Div(\tilde{\chi}_{u_n}a)_{i}\right)\star\rho_{\eps,\delta}\pa_{j}\bar{\chi}_{v_n}^{\eps,\delta}d\mu d\xi-\int_{\mathbb{R}^{d+1}}\left(g^{ij}\Div(\tilde{\chi}_{u_n}a)_{i}\right)\star\rho_{\eps,\delta}\bar{\chi}_{v_n}^{\eps,\delta}\Gamma_{js}^{s}d\mu d\xi+\\
+\int_{\mathbb{R}^{d+1}}\left((g^{ia}\Gamma_{aj}^{j}+g^{jb}\Gamma_{jb}^{i})\Div(\tilde{\chi}_{u_n}a)_{i}\right)\star\rho_{\eps,\delta}\bar{\chi}_{v}^{\eps,\delta}d\mu d\xi\\
-\int_{\mathbb{R}^{d+1}}\left(g^{ij}\Gamma_{ij}^{k}\Div(\chi_{u}a)_{k}\right)\star\rho_{\eps,\delta}\bar{\chi}_{v}^{\eps,\delta}d\mu d\xi\\
\approx \int_{\mathbb{R}^{d+1}}\left(g^{ij}\Div(\tilde{\chi}_{u_n}a)_{i}\right)\star\rho_{\eps,\delta}\pa_{j}\bar{\chi}_{v_n}^{\eps,\delta}d\mu d\xi
\label{divdivstart}
\end{multline}

Turning now to \eqref{eq:num1}, we again use \eqref{g}, as well as
\begin{equation}\label{k}
g^{ij}a^k_i=g^{ri}(\sigma^{T})_{r}^{k} (\sigma^{T})_{i}^{j}
\end{equation}
to calculate:
\begin{multline}
\!\!\!\!\!\!
\int_{\mathbb{R}^{d+1}}\left(g^{ij}\Div(\tilde{\chi}_{u_n}a)_{i}\right)\star\rho_{\eps,\delta}\pa_{j}\bar{\chi}_{v_n}^{\eps,\delta}d\mu d\xi\\
=\int_{\mathbb{R}^{2d+2}}g^{ij}(\my)\pa_{k}(\tilde{\chi}_{u_n}a_{i}^{k})(\my,\eta)\rho_{\eps,\delta}(\mx-\my,\xi-\eta)d\my d\eta\pa_{j}\bar{\chi}_{v_n}^{\eps,\delta}(\mx,\xi)d\mu(\mx)d\xi d\my d\eta \\
+\int_{\mathbb{R}^{2d+2}}\pa_{j}\bar{\chi}_{v_n}^{\eps,\delta}(\mx,\xi)\tilde{\chi}_{u_n}(\my,\eta)g^{ij}(\my)\left[\Gamma_{ml}^{m}(\my)a_{i}^{l}(\my,\eta)-\Gamma_{li}^{m}(\my)a_{m}^{l}(\my,\eta)\right]\times \\
\hspace*{\fill} \times \rho_{\eps,\delta}(\mx-\my,\xi-\eta) d\my d\eta d\mu(\mx)d\xi\\
=
\int_{\mathbb{R}^{2d+2}}g^{ij}(\my)(\chi_{u}a_{i}^{k})(\my,\eta)\pa_{k}\rho_{\eps,\delta}(\mx-\my,\xi-\eta)d\my d\eta\pa_{j}\bar{\chi}_{v_n}^{\eps,\delta}(\mx,\xi) d\my d\eta d\mu(\mx)d\xi \\
-\int_{\mathbb{R}^{2d+2}}\pa_{k}g^{ij}(\my)(\tilde{\chi}_{u_n}a_{i}^{k})(\my,\eta)\rho_{\eps,\delta}(\mx-\my,\xi-\eta)dyd\eta\pa_{j}\bar{\chi}_{v_n}^{\eps,\delta}(\mx,\xi) d\my d\eta d\mu(\mx)d\xi\\
+\int_{\mathbb{R}^{2d+2}}\pa_{j}\bar{\chi}_{v_n}^{\eps,\delta}(\mx,\xi)\tilde{\chi}_{u_n}(\my,\eta)g^{ij}(\my)\left[\Gamma_{ml}^{m}(\my)a_{i}^{l}(\my,\eta)-\Gamma_{li}^{m}(\my)a_{m}^{l}(\my,\eta)\right]\times \\
\hspace*{\fill}
\times \rho_{\eps,\delta}(\mx-\my,\xi-\eta)d\my d\eta d\mu(\mx)d\xi\\=
\int_{\mathbb{R}^{2d+2}}g^{ij}(\my)(\chi_{u}a_{i}^{k})(\my,\eta)\pa_{k}\rho_{\eps,\delta}(\mx-\my,\xi-\eta)dyd\eta\pa_{j}\chi_{v}^{\eps,\delta}(\mx,\xi) d\my d\eta d\mu(\mx)d\xi\\
-\int_{\mathbb{R}^{2d+2}}\left(\Gamma_{ka}^{j}g^{ia}+\Gamma_{kb}^{i}g^{jb}\right)
(\my)(\tilde{\chi}_{u_n} a_{i}^{k})(\my,\eta)\times \\
\hspace*{\fill}\times
\rho_{\eps,\delta}(\mx-\my,\xi-\eta)d\my d\eta\pa_{j}\bar{\chi}_{v_n}^{\eps,\delta}(\mx,\xi) d\my d\eta d\mu(\mx)d\xi\\
+\int_{\mathbb{R}^{2d+2}}\pa_{j}\bar{\chi}_{v_n}^{\eps,\delta}(\mx,\xi)\tilde{\chi}_{u_n}(\my,\eta)g^{ij}(\my)
\left[\Gamma_{ml}^{m}(\my)a_{i}^{l}(\my,\eta)-\Gamma_{li}^{m}(\my)a_{m}^{l}(\my,\eta)\right]	
\times\\
\hspace*{\fill}\times
\rho_{\eps,\delta}(\mx-\my,\xi-\eta)d\my d\eta d\mu(\mx)d\xi\\
=
\int_{\mathbb{R}^{d+1}}\int_{\mathbb{R}^{d+1}}g^{ri}(\my)(\sigma^{T})_{r}^{k}(\my,\eta)(\sigma^{T})_{i}^{j}(\my,\eta)
\bar{\chi}_{v_n}(\mz,\zeta)\tilde{\chi}_{u_n}(\my,\eta)\times\\
\hspace*{\fill}\times
\pa_{k}\rho_{\eps,\delta}(\mx-\my,\xi-\eta)\pa_{j}\rho_{\eps,\delta}(\mx-\mz,\xi-\zeta)d\my d\eta d\mz d\zeta d\mu(\mx)d\xi\\
+\int_{\mathbb{R}^{d+1}}\int_{\mathbb{R}^{d+1}}\pa_{j}\bar{\chi}_{v_n}^{\eps,\delta}(\mx,\xi)\tilde{\chi}_{u_n}(\my,\eta)\left[g^{ij}\Gamma_{ml}^{m}a_{i}^{l}+\Gamma_{ka}^{j}g^{ia}a_{i}^{k}\right](\my,\eta)\times\\
\hspace*{\fill}\times
\rho_{\eps,\delta}(\mx-\my,\xi-\eta)d\my d\eta d\mu(\mx)d\xi\\ 
\approx
\int_{\mathbb{R}^{d+1}}\int_{\mathbb{R}^{d+1}}g^{ri}(\my)(\sigma^{T})_{r}^{k}(\my,\eta)(\sigma^{T})_{i}^{j}(\my,\eta)\bar{\chi}_{v_n}(\mz,\zeta)
\tilde{\chi}_{u_n}(\my,\eta)\times \\
\hspace*{\fill}\times
\pa_{k}\rho_{\eps,\delta}(\mx-\my,\xi-\eta)\pa_{j}\rho_{\eps,\delta}(\mx-\mz,\xi-\zeta)d\my d\eta d\mz d\zeta d\mu(\mx)d\xi\\
+\int_{\mathbb{R}^{d+1}}\pa_{j}\chi_{v}^{\eps,\delta}(\mx,\xi)\chi_{u}^{\eps,\delta}(\mx,\xi)\left[g^{ij}\Gamma_{ml}^{m}a_{i}^{l}+\Gamma_{ka}^{j}g^{ia}a_{i}^{k}\right](\mx,\xi)d\mu(\mx)d\xi. \label{eq:no1}
\end{multline}
Here, the last $\approx$ follows from the the Friedrichs lemma in the following way: For any $f\in C^2_c(\R^{d+1})$ we have
\begin{align*}
\int_{\mathbb{R}^{d+1}}\pa_{j}\bar{\chi}_{v_n}^{\eps,\delta}(x,\xi) & \left(\left(\tilde{\chi}_{u_n}f\right)\star\rho_{\eps,\delta}\right)(\mx,\xi)d\mu(\mx)d\xi\\ 
&=-\int_{\mathbb{R}^{d+1}}\bar{\chi}_{v_n}^{\eps,\delta}(\mx,\xi)\left(\pa_{j}(\tilde{\chi}_{u_n}f)\star\rho_{\eps,\delta}\right)(\mx,\xi)d\mu(\mx)d\xi\\
& \hspace*{3.1em}
-\int_{\mathbb{R}^{d+1}}\bar{\chi}_{v_n}^{\eps,\delta}(\mx,\xi)\left((\tilde{\chi}_{u_n}f)\star\rho_{\eps,\delta}\right)(\mx,\xi)\pa_{j}
\sqrt{\left|g(\mx)\right|}d\mx d\xi\\ 
&\approx
-\int_{\mathbb{R}^{d+1}}\bar{\chi}_{v_n}^{\eps,\delta}(\mx,\xi)\pa_{j}(\tilde{\chi}_{u_n}^{\eps,\delta}f)(\mx,\xi)d\mu(\mx)d\xi\\
& \hspace*{3.1em}
-\int_{\mathbb{R}^{d+1}}\bar{\chi}_{v_n}^{\eps,\delta}(\mx,\xi)\tilde{\chi}_{u_n}^{\eps,\delta}(\mx,\xi)f(\mx,\xi)\pa_{j}
\sqrt{\left|g(\mx)\right|}d\mx d\xi \\ 
&= \int_{\mathbb{R}^{d+1}}\pa_{j}\bar{\chi}_{v_n}^{\eps,\delta}(\mx,\xi)\tilde{\chi}_{u_n}^{\eps,\delta}(\mx,\xi)\, f(\mx,\xi)d\mu(\mx)d\xi.
\end{align*}
To summarize, (\ref{eq:no1}) becomes
\begin{multline}
\int_{\mathbb{R}^{d+1}}\left(g^{ij}\Div(\tilde{\chi}_{u_n}a)_{i}\right)\star\rho_{\eps,\delta}\pa_{j}\bar{\chi}_{v_n}^{\eps,\delta} d\mu d\xi\\ 
\hspace*{-1em}\approx
\int_{\mathbb{R}^{3d+3}}g^{ri}(\my)(\sigma^{T})_{r}^{k}(\my,\eta)(\sigma^{T})_{i}^{j}(\my,\eta)\bar{\chi}_{v_n}(\mz,\zeta)
\tilde{\chi}_{u_n}(\my,\eta)\times\\
\hspace*{3em}\times
\pa_{k}\rho_{\eps,\delta}(\mx-\my,\xi-\eta)
\pa_{j}\rho_{\eps,\delta}(\mx-\mz,\xi-\zeta)d\my d\eta d\mz d\zeta d\mu(\mx)d\xi\\
\hspace*{\fill}
+\int_{\mathbb{R}^{d+1}}\pa_{j}\bar{\chi}_{v_n}^{\eps,\delta}(\mx,\xi)\tilde{\chi}_{u_n}^{\eps,\delta}(\mx,\xi)
\left[g^{ij}\Gamma_{ml}^{m}a_{i}^{l}+\Gamma_{ka}^{j}g^{ia}a_{i}^{k}\right](\mx,\xi)d\mu(\mx)d\xi. \label{a3new}
\end{multline}

To simplify notation we set $\tilde{\sigma}^{ij}:=h^{ik} (\sigma^T)^j_k$. Looking at the fourth term from (\ref{eq:num2}) we see
\begin{multline}
\int_{\R^{3d+3}} \delta_{ml} \tilde{\sigma}^{lk}(\mz,\zeta)\left[\tilde{\sigma}^{mj}(\mz,\zeta)-\tilde{\sigma}^{mj}(\my,\eta)\right] \times \hspace*{\fill} \\
\bar{\chi}_{v_n}(\mz,\zeta)\tilde{\chi}_{u_n}(\my,\eta)\pa_{k}\rho_{\eps,\delta}(
\mx-\mz,\xi-\zeta)\pa_{j}\rho_{\eps,\delta}(\mx-\my,\xi-\eta)d\my d\eta d\mz d\zeta d\mu(\mx)d\xi\\ =
\int_{\R^{3d+3}} \delta_{ml} \tilde{\sigma}^{lk}(\mz,\zeta)\tilde{\sigma}^{mj}(\mz,\zeta)\bar{\chi}_{v_n}(\mz,\zeta)\tilde{\chi}_{u_n}(\my,\eta)\times \hspace*{\fill} \\
\hspace*{\fill} \pa_{k}\rho_{\eps,\delta}(
\mx-\mz,\xi-\zeta)\pa_{j}\rho_{\eps,\delta}(\mx-\my,\xi-\eta)d\my d\eta d\mz d\zeta d\mu(\mx)d\xi\\ 
\hspace*{-10em}
-\int_{\R^{3d+3}} \delta_{ml} \tilde{\sigma}^{lk}(\mz,\zeta) \tilde{\sigma}^{mj}(\my,\eta)\bar{\chi}_{v_n}(\mz,\zeta)\tilde{\chi}_{u_n}(\my,\eta) \times \\
\hspace*{\fill} \pa_{k}\rho_{\eps,\delta}(
\mx-\mz,\xi-\zeta)\pa_{j}\rho_{\eps,\delta}(\mx-\my,\xi-\eta)d\my d\eta d\mz d\zeta d\mu(\mx)d\xi\\ 
\hspace*{-15.5em}
= \int_{\R^{3d+3}} \delta_{ml} \tilde{\sigma}^{lk}(\mz,\zeta)\tilde{\sigma}^{mj}(\mz,\zeta)\bar{\chi}_{v_n}(\mz,\zeta)\tilde{\chi}_{u_n}(\my,\eta)\times \\
\hspace*{\fill} \pa_{k}\rho_{\eps,\delta}(
\mx-\mz,\xi-\zeta)\pa_{j}\rho_{\eps,\delta}(\mx-\my,\xi-\eta)d\my d\eta d\mz d\zeta d\mu(\mx)d\xi\\ 
\hspace*{-7em}
-\int_{\R^{d+1}} \delta_{ml} \pa_k (\tilde{\sigma}^{lk} \bar{\chi}_{v_n})\star\rho_{\eps,\delta} \pa_j (\tilde{\sigma}^{mj}\tilde{\chi}_{u_n})\star \rho_{\eps,\delta} \sqrt{|g|} d\mx d\xi\\ 
\hspace*{-15em}
= \int_{\R^{3d+3}} \delta_{ml} \tilde{\sigma}^{lk}(\mz,\zeta)\tilde{\sigma}^{mj}(\mz,\zeta)\bar{\chi}_{v_n}(\mz,\zeta)\tilde{\chi}_{u_n}(\my,\eta)\times \\
\hspace*{\fill} \pa_{j}\rho_{\eps,\delta}(
\mx-\mz,\xi-\zeta)\pa_{k}\rho_{\eps,\delta}(\mx-\my,\xi-\eta)d\my d\eta d\mz d\zeta d\mu(\mx)d\xi\\ 
\hspace*{-7em}
+\int_{\R^{d+1}} \delta_{ml} (\tilde{\sigma}^{lk} \bar{\chi}_{v_n})\star\rho_{\eps,\delta} \pa_k\pa_j (\tilde{\sigma}^{mj}\tilde{\chi}_{u_n})\star 
\rho_{\eps,\delta} \sqrt{|g|} d\mx  d\xi \\
\hspace*{\fill}
+\int_{\R^{d+1}} \delta_{ml} (\tilde{\sigma}^{lk} \bar{\chi}_{v_n})\star\rho_{\eps,\delta} \pa_j (\tilde{\sigma}^{mj}\tilde{\chi}_{u_n})\star \rho_{\eps,\delta} \Gamma^s_{ks}d\mx d\xi\\ 
\hspace*{-14.5	em}
\approx
\int_{\R^{3d+3}} \delta_{ml} \tilde{\sigma}^{lk}(\mz,\zeta)\tilde{\sigma}^{mj}(\mz,\zeta)\bar{\chi}_{v_n}(\mz,\zeta)\tilde{\chi}_{u_n}(\my,\eta)\times \\
\hspace*{\fill} \pa_{j}\rho_{\eps,\delta}(
\mx-\mz,\xi-\zeta)\pa_{k}\rho_{\eps,\delta}(\mx-\my,\xi-\eta)d\my d\eta d\mz d\zeta d\mu(\mx)d\xi\\ 
\hspace*{\fill}
-\int_{\R^{d+1}} \delta_{ml} \pa_j(\tilde{\sigma}^{lk} \bar{\chi}_{v_n})\star\rho_{\eps,\delta} \pa_k 
(\tilde{\sigma}^{mj}\tilde{\chi}_{u_n})\star \rho_{\eps,\delta}d\mu(\mx)d\xi\\
-\int_{\R^{d+1}} \delta_{ml} (\tilde{\sigma}^{lk} \bar{\chi}_{v_n})\star\rho_{\eps,\delta} 
\pa_k (\tilde{\sigma}^{mj}\tilde{\chi}_{u_n})\star \rho_{\eps,\delta} \Gamma^s_{js}d\mx d\xi
\\ 
\hspace*{\fill}+
\int_{\R^{d+1}} \delta_{ml} \tilde{\sigma}^{lk} \bar{\chi}_{v_n}^{\eps,\delta} \pa_j (\tilde{\sigma}^{mj}\tilde{\chi}_{u_n}^{\eps,\delta}) \Gamma^s_{ks}d\mx d\xi \\ 
\hspace*{-15.5em}\approx
\int_{\R^{3d+3}} \delta_{ml} \tilde{\sigma}^{lk}(\mz,\zeta)\left[\tilde{\sigma}^{mj}(\mz,\zeta)-\tilde{\sigma}^{mj}(\my,\eta)\right] \times \\
\hspace*{\fill} \bar{\chi}_{v_n}(\mz,\zeta)\tilde{\chi}_{u_n}(\my,\eta)\pa_{j}\rho_{\eps,\delta}(
\mx-\mz,\xi-\zeta)\pa_{k}\rho_{\eps,\delta}(\mx-\my,\xi-\eta)d\my d\eta d\mz d\zeta d\mu(\mx)d\xi\\ 
\hspace*{\fill}
-
\int_{\R^{d+1}} \delta_{ml} \tilde{\sigma}^{lk} \bar{\chi}_{v_n}^{\eps,\delta} \pa_k (\tilde{\sigma}^{mj}\tilde{\chi}_{u_n}^{\eps,\delta}) \Gamma^s_{js}d\mx d\xi
+
\int_{\R^{d+1}} \delta_{ml} \tilde{\sigma}^{lk} \bar{\chi}_{v_n}^{\eps,\delta} \pa_j (\tilde{\sigma}^{mj}\tilde{\chi}_{u_n}^{\eps,\delta}) \Gamma^s_{ks}d\mx d\xi  \\ 
\hspace*{-2.5em} =
\int_{\mathbb{R}^{3d+3}}  \delta_{ml}h^{rl}(\mz)(\sigma^{T})_{r}^{k}(\mz,\zeta)\left[h^{mi}(\mz)(\sigma^{T})_{i}^{j}(\mz,\zeta)-h^{mi}(\my)(\sigma^{T})_{i}^{j}(\my,\eta)\right]\times \\ \hspace*{\fill} \bar{\chi}_{v_n}(\mz,\zeta)\tilde{\chi}_{u_n}(\my,\eta) \pa_{j}\rho_{\eps,\delta}(\mx-\mz,\xi-\zeta) \pa_{k}\rho_{\eps,\delta}(\mx-\my,\xi-\eta)d\my d\eta d\mz d\zeta d\mu(\mx)d\xi \\ +
\int_{\mathbb{R}^{d+1}} \delta_{ml}h^{rl}\bar{\chi}^{\eps,\delta}_{v_n}\tilde{\chi}^{\eps,\delta}_{u_n}(\sigma^{T})_{r}^{k}\Gamma_{ks}^{s}\pa_{j}(h^{mi}(\sigma^{T})_{i}^{j})d\mu d\xi
\\-\int_{\mathbb{R}^{d+1}} \delta_{ml}h^{rl}\bar{\chi}^{\eps,\delta}_{v_n}\tilde{\chi}^{\eps,\delta}_{u_n}(\sigma^{T})_{r}^{k}\Gamma_{js}^{s}\pa_{k}(h^{mi}(\sigma^{T})_{i}^{j})d\mu d\xi.
\label{a4new}
\end{multline}
This establishes \eqref{eq:num2.5}.

Next we show \eqref{eq:num2.7}. Using again the notation $\tilde{\sigma}^{ij}:=h^{ik} (\sigma^T)^j_k$, we have to show that
\begin{multline}
\int_{\mathbb{R}^{3d+3}} \delta_{lm} \left[ \tilde{\sigma}^{lk}(\my,\eta)-\tilde{\sigma}^{lk}(\mz,\zeta) \right]\, \left[ \tilde{\sigma}^{mj}(\my,\eta)-\tilde{\sigma}^{mj}(\mz,\zeta) \right] \times\\
\times\bar{\chi}_{v_n}(\mz,\zeta)\tilde{\chi}_{u_n}(\my,\eta)\pa_{k}\rho_{\eps,\delta}(\mx-\my,\xi-\eta)\pa_{j}\rho_{\eps,\delta}(\mx-\mz,\xi-\zeta)d\my d\eta d\mz d\zeta d\mu(\mx)d\xi\\ \approx
-\int_{\mathbb{R}^{d+1}} \delta_{lm} \bar{\chi}^{\eps,\delta}_{v_n} \tilde{\chi}^{\eps,\delta}_{u_n} \left[ \pa_{k}\left(\tilde{\sigma}^{lk}\right)\pa_{j}\left(\tilde{\sigma}^{mj}\right)+\pa_{j}\left(\tilde{\sigma}^{lk}\right)\pa_{k}\left(\tilde{\sigma}^{mj}\right) \right] d\mu d\xi.
\label{a5new}
\end{multline} 
To do so, we introduce a change of variables, 
$$
\bar{\my}=\frac{\mx-\my}{\eps}, \ \ \bar{\mz}=\frac{\mx-\mz}{\eps}, \ \ \bar{\eta}=\frac{\xi-\eta}{\delta}, \ \ \bar{\zeta}=\frac{\xi-\zeta}{\delta},
$$ 
so the  left  hand side of \eqref{a5new} becomes
\begin{equation}
\begin{split}
&(-1)^{2d+2} \int_K \varepsilon^{2d}\delta^{2}\,\tilde{\chi}_{u_n}(\mx-\varepsilon\bar{\my},\xi-\delta\bar{\eta})
\bar{\chi}_{v_n}(\mx-\varepsilon\bar{\mz},\xi-\delta\bar{\zeta})\times\\
& \hspace*{3em}
\delta_{lm} \left[ \tilde{\sigma}^{lk}(\mx-\varepsilon\bar{\my},\xi-\delta\bar{\eta})-
\tilde{\sigma}^{lk}(\mx-\varepsilon\bar{\mz},\xi-\delta\bar{\zeta}) \right] \times \\
& \hspace*{6em}
\times \left[ \tilde{\sigma}^{mj}(\mx-\varepsilon\bar{\my},\xi-\delta\bar{\eta})-\tilde{\sigma}^{mj}(\mx-\varepsilon\bar{\mz},\xi-\delta\bar{\zeta}) \right] \\
&\hspace*{8em}
\times \pa_{k}\rho_{\eps,\delta}(\eps \bar{\my},\delta \bar{\eta})\pa_{j}\rho_{\eps,\delta}(\eps \bar{\mz},\delta \bar{\zeta}) \: d\bar{\mz}d\bar{\zeta}d\bar{\my}d\bar{\eta}d\mu(\mx)d\xi,
\label{a5.5new}
\end{split}
\end{equation}
 where $K\subset \mathbb{R}^{3(d+1)}$ is a suitable compact set (the $\tilde{\chi}_{u_n}$ 
have compact support, uniformly in $n$). Henceforth, we will simply use the letter 
$K$ to generically denote such compact sets.
Recalling our simplifying assumption on suppressing $t$-dependence, we have  
$$
\partial_{j}\rho_{\varepsilon,\delta}(\varepsilon\bar{\my},\delta\bar{\eta})
=\frac{1}{\varepsilon^{d}\delta}\omega_1(\bar{\eta})\,\Pi_{s\neq j}\omega_2(\bar{\my}_{s})\,\frac{1}{\varepsilon}\,
\partial_{j}\omega_2(\bar{\my}_{j}),
$$  
so \eqref{a5.5new} becomes
\begin{equation}
\begin{split}
&\int_K \frac{1}{\eps^2}\,\tilde{\chi}_{u_n}(\mx-\varepsilon\bar{\my},\xi-\delta\bar{\eta})\bar{\chi}_{v_n}
(\mx-\varepsilon\bar{\mz},\xi-\delta\bar{\zeta})\times\\
& \hspace*{2em}\times\delta_{lm} \left[ \tilde{\sigma}^{lk}(\mx-\varepsilon\bar{\my},\xi-\delta\bar{\eta})
-\tilde{\sigma}^{lk}(\mx-\varepsilon\bar{\mz},\xi-\delta\bar{\zeta}) \right]\, \\
& \hspace*{4em}\times \left[ \tilde{\sigma}^{mj}(\mx-\varepsilon\bar{\my},\xi-\delta\bar{\eta})-
\tilde{\sigma}^{mj}(\mx-\varepsilon\bar{\mz},\xi-\delta\bar{\zeta}) \right] \omega_1(\bar{\eta})\,\Pi_{s\neq k}\omega_2(\bar{\my}_{s}) \\
& \hspace*{7em} \times
\partial_{k}\omega_2(\bar{\my}_{k}) 
\omega_1(\bar{\zeta})\,\Pi_{r\neq j}\omega_2(\bar{\mz}_{r})\,\partial_{j}\omega_2(\bar{\mz}_{j}) \: 
d\bar{\mz}d\bar{\zeta}d\bar{\my}d\bar{\eta}d\mu(\mx)d\xi.
\label{a5.6new}
\end{split}
\end{equation}
We now expand $\tilde{\sigma}^{lk}(\mx-\eps \tilde{\my},\xi-\delta \tilde{\eta})$ and $\tilde{\sigma}^{lk}(\mx-\eps \tilde{\mz},\xi-\delta \tilde{\zeta})$ in a Taylor series around $(\mx,\xi)$ to obtain 
\begin{multline*}
\tilde{\sigma}^{lk}(\mx-\eps \bar{\my},\xi-\delta \bar{\eta})-\tilde{\sigma}^{lk}(\mx-\eps \bar{\mz},\xi-\delta \bar{\zeta})=\\
\sum_{r=1}^{d}\partial_{r}\tilde{\sigma}^{lk}(\mx,\xi)\,\varepsilon(\bar{\mz}_{r}-\bar{\my}_{r})+\partial_{\xi}\tilde{\sigma}^{lk}(\mx,\xi)\,\delta(\bar{\zeta}-\bar{\eta})+\\
+\sum_{|\alpha|=2}\left[ R^{lk}_\alpha(\mx,\eps \bar{\my},\xi,\delta \bar{\eta})(-\eps \bar{\my},-\delta \bar{\eta})^\alpha- 
R^{lk}_\alpha(\mx,\eps \bar{\mz},\xi,\delta \bar{\zeta})(-\eps \bar{\mz},-\delta \bar{\zeta})^\alpha\right],
\end{multline*} 
where the $R^{lk}_\alpha$ are suitable bounded functions (since $\tilde{\sigma}^{lk}\in C^2$).
Doing the same for $\tilde{\sigma}^{mj}(\mx-\eps \tilde{\my},\xi-\delta \tilde{\eta})$ and $\tilde{\sigma}^{mj}(\mx-\eps \tilde{\mz},\xi-\delta \tilde{\zeta})$ and multiplying, we see that the only relevant remaining term is 
$$\eps^2 \left[\sum_{r=1}^{d}\partial_{r}\tilde{\sigma}^{lk}(\mx,\xi)\,(\bar{\mz}_{r}-\bar{\my}_{r})\right]\times \left[\sum_{s=1}^{d}\partial_{s}\tilde{\sigma}^{mj}(\mx,\xi)\,(\bar{\mz}_{s}-\bar{\my}_{s})\right]$$
since all other terms will go to zero as,  first, $n \to \infty$,  then $\delta \to 0$ and finally $\eps \to 0$ by boundedness on compact sets (uniformly in $n$) of all functions  appearing in the integrand.
We may also replace $\tilde{\chi}_{u_n}(\mx-\varepsilon\bar{\my},\xi-\delta
\bar{\eta})$ by $\tilde{\chi}_{u_n}(\mx,\xi)$: The difference of both versions can 
be estimated by
$$
C\, \int_{{K}} \int_{B(0,\eps)}\int_{B(0,\delta)}\left|\tilde{\chi}_{u}
(\mx-\varepsilon\bar{\my},\xi-\delta\bar{\eta})-\tilde{\chi}_{u}(\mx,\xi)\right|d\bar{\eta} d\bar{\my} d\mu(\mx)d\xi
$$ 
as  $n \to \infty$.  Now by assumption $\tilde{\chi}_u \in L^1$ since it is bounded and has compact support, so the Lebesgue differentiation theorem applies, and together with dominated convergence shows that this integral converges to zero as, first, $\delta \to 0$, and then $\eps \to 0$. 
By a similar argument we may afterwards also replace $\bar{\chi}_{v_n}(\mx-\varepsilon\bar{\mz},\xi-\delta
\bar{\zeta})$ by $\bar{\chi}_{v_n}(\mx,\xi)$.
This gives
\begin{multline}
\int_K \frac{1}{\eps^2}\,\tilde{\chi}_{u_n}(\mx-\varepsilon\bar{\my},\xi-\delta\bar{\eta})\bar{\chi}_{v_n}(\mx-\varepsilon\bar{\mz},\xi-\delta\bar{\zeta})\times\\
\hspace*{-8em}\delta_{lm} \left[ \tilde{\sigma}^{lk}(\mx-\varepsilon\bar{\my},\xi-\delta\bar{\eta})-\tilde{\sigma}^{lk}(\mx-\varepsilon\bar{\mz},\xi-\delta\bar{\zeta}) \right]
\times\\
\times \left[ \tilde{\sigma}^{mj}(\mx-\varepsilon\bar{\my},\xi-\delta\bar{\eta})-\tilde{\sigma}^{mj}(\mx-\varepsilon\bar{\mz},
\xi-\delta\bar{\zeta}) \right] \omega_1(\bar{\eta})\,\Pi_{s\neq k}\omega_2(\bar{\my}_{s})\\
\times
\partial_{k}\omega_2(\bar{\my}_{k}) \omega_1(\bar{\zeta})\,\Pi_{r\neq j}\omega_2(\bar{\mz}_{r})\,\partial_{j}\omega_2(\bar{\mz}_{j}) \: d\bar{\mz}d\bar{\zeta}d\bar{\my}d\bar{\eta}d\mu(\mx)d\xi \\ \hspace*{-8em}\approx
\int_K \tilde{\chi}_{u_n}(\mx,\xi)\bar{\chi}_{v_n}(\mx,\xi) 
\delta_{lm} \sum_{r,s=1}^{d}\left[\partial_{r}\tilde{\sigma}^{lk}(\mx,\xi)
 \partial_{s}\tilde{\sigma}^{mj}(\mx,\xi)\right. \times\\ 
\left. \times (\bar{\mz}_{s}\bar{\mz}_r-\bar{\my}_{s}\bar{\mz}_r-\bar{\mz}_{s}\bar{\my}_r+
\bar{\my}_{s}\bar{\my}_r)\right] 
\omega_1(\bar{\eta})\,\Pi_{s\neq k}\omega_2(\bar{\my}_{s})\,\partial_{k}\omega_2(\bar{\my}_{k}) \omega_1(\bar{\zeta})\\
\times \Pi_{r\neq j}\omega_2(\bar{\mz}_{r})
\partial_{j}\omega_2(\bar{\mz}_{j}) \: d\bar{\mz}d\bar{\zeta}d\bar{\my}d\bar{\eta}d\mu(\mx)d\xi \\
\hspace*{-8em}
=\int_K \tilde{\chi}_{u_n}(\mx,\xi)\bar{\chi}_{v_n}(\mx,\xi) 
\delta_{lm} \left[-\partial_{j}\tilde{\sigma}^{lk}(\mx,\xi)\,\partial_{k}\tilde{\sigma}^{mj}(\mx,\xi)\,\bar{\my}_{k}\bar{\mz}_j \right.
\\ \left.
-\partial_{k}\tilde{\sigma}^{lk}(\mx,\xi)\,\partial_{j}\tilde{\sigma}^{mj}(\mx,\xi)\,\bar{\mz}_{j}\bar{\my}_k)\right] 
\partial_{k}\omega_2(\bar{\my}_{k})\partial_{j}\omega_2(\bar{\mz}_{j}) \: d\bar{\mz}_j d\bar{\my}_k d\mu(\mx)d\xi \\
\hspace*{-10em}=
\int_K \tilde{\chi}_{u_n}(\mx,\xi)\bar{\chi}_{v_n}(\mx,\xi) 
\delta_{lm} \left[-\partial_{j}\tilde{\sigma}^{lk}(\mx,\xi)\,\partial_{k}\tilde{\sigma}^{mj}(\mx,\xi)\right.\\
\left.-\partial_{k}\tilde{\sigma}^{lk}(\mx,\xi)\,\partial_{j}\tilde{\sigma}^{mj}(\mx,\xi))\right] \: d\mu(\mx)d\xi.
\label{a6new}
\end{multline}
This concludes the proof of \eqref{eq:num2.7}.

Next we have to show that the first and second term of \eqref{eq:num2} sum to \eqref{eq:num4}. For the first term of \eqref{eq:num2} we get
\begin{multline}
\int_{\mathbb{R}^{3d+3}} \delta_{ml}h^{rl}(\my)(\sigma^{T})_{r}^{k}(\my,\eta)h^{mi}(\mz)(\sigma^{T})_{i}^{j}(\mz,\zeta)  \bar{\chi}_{v_n}(\mz,\zeta)\tilde{\chi}_{u_n}(\my,\eta)  \times\\
\times \pa_{k}\rho_{\eps,\delta}(\mx-\my,\xi-\eta)\pa_{j}\rho_{\eps,\delta}(\mx-\mz,\xi-\zeta)d\my d\eta d\mz d\zeta d\mu(\mx)d\xi \\=
\int_{\mathbb{R}^{d+1}} \delta_{ml} \pa_{k}\left(h^{rl}(\sigma^{T})_{r}^{k}\tilde{\chi}_{u_n}\right)\star\rho_{\eps,\delta}\pa_{j}\left(h^{mi}(\sigma^{T})_{i}^{j}\bar{\chi}_{v_n}\right)\star\rho_{\eps,\delta}d\mu d\xi \\ 
\approx
\int_{\mathbb{R}^{d+1}} \delta_{ml} \pa_{k}(\phi_\alpha h^{rl})(\sigma^{T})_{r}^{k}\chi_{u_n}^{\eps,\delta}\pa_{j}\left(h^{mi}(\sigma^{T})_{i}^{j}\bar{\chi}_{v_n}^{\eps,\delta}\right)d\mu d\xi \\+
\int_{\mathbb{R}^{d+1}} \delta_{ml}\left(\phi_\alpha h^{rl}\pa_{k}((\sigma^{T})_{r}^{k}\chi_{u_n})\right)^{\eps,\delta}\pa_{j}\left(h^{mi}(\sigma^{T})_{i}^{j}\bar{\chi}_{v_n}\right)^{\eps,\delta}d\mu d\xi \\ 
\hspace*{-5em}\approx
\int_{\mathbb{R}^{d+1}} \delta_{ml} \pa_{k}(\phi_\alpha h^{rl})(\sigma^{T})_{r}^{k}\chi_{u_n}^{\eps,\delta}h^{mi}(\sigma^{T})_{i}^{j}\phi_\alpha \pa_{j}(1-\chi_{v_n}^{\eps,\delta})d\mu d\xi \\ 
\hspace*{-13em}
+\int_{\mathbb{R}^{d+1}}(1-\chi_{v_n}^{\eps,\delta})\chi_{u_n}^{\eps,\delta}G(\mx,\xi)d\mu d\xi \\
+\int_{\mathbb{R}^{d+1}} \delta_{ml} \left(\phi_\alpha h^{rl}\pa_{k}((\sigma^{T})_{r}^{k}\chi_{u_n})\right)^{\eps,\delta}
\pa_{j}\left(h^{mi}(\sigma^{T})_{i}^{j}\bar{\chi}_{v_n}\right)^{\eps,\delta}d\mu d\xi 
\label{eq:num3}
\end{multline}
A similar calculation gives
\begin{multline}
\int_{\mathbb{R}^{d+1}} \delta_{ml}\left(\phi_\alpha h^{rl}\pa_{k}((\sigma^{T})_{r}^{k}\chi_{u_n})\right)^{\eps,\delta}\pa_{j}\left(h^{mi}(\sigma^{T})_{i}^{j}\bar{\chi}_{v_n}\right)^{\eps,\delta}d\mu d\xi \\ \approx
\int_{\mathbb{R}^{d+1}} \delta_{ml} \phi_\alpha h^{rl}(\sigma^{T})_{r}^{k}\pa_{k}\chi_{u_n}^{\eps,\delta} \pa_{j}(\phi_\alpha h^{mi}) (\sigma^{T})_{i}^{j} (1-\chi_{v_n}^{\eps,\delta})d\mu d\xi \\
+ \int_{\mathbb{R}^{d+1}}(1-\chi_{v_n}^{\eps,\delta})\chi_{u_n}^{\eps,\delta}G(\mx,\xi)d\mu d\xi \\+
\int_{\mathbb{R}^{d+1}} \delta_{ml} \left(\phi_\alpha h^{rl}\pa_{k}((\sigma^{T})_{r}^{k}\chi_{u_n})\right)^{\eps,\delta} \left(h^{mi} \phi_\alpha \pa_j((\sigma^{T})_{i}^{j}(1-\chi_{v_n}))\right)^{\eps,\delta}d\mu d\xi.
\label{eq:num3.5}
\end{multline}
Putting together \eqref{eq:num3} and \eqref{eq:num3.5} and doing an integration by parts (to get the terms containing $\pa_{j}(1-\chi_{v_n}^{\eps,\delta})$ and $\pa_{k}\chi_{u_n}^{\eps,\delta}$, respectively, to cancel up to a term absorbed into the function $G$) gives
\begin{multline}
\int_{\mathbb{R}^{d+1}} \delta_{ml} \pa_{k}\left(h^{rl}(\sigma^{T})_{r}^{k}\tilde{\chi}_{u_n}\right)\star\rho_{\eps,\delta}\pa_{j}\left(h^{mi}(\sigma^{T})_{i}^{j}\bar{\chi}_{v_n}\right)\star\rho_{\eps,\delta}d\mu d\xi \\ \approx 
\int_{\mathbb{R}^{d+1}} \delta_{ml} \left(\phi_\alpha h^{rl}\pa_{k}((\sigma^{T})_{r}^{k}\chi_{u_n})\right)^{\eps,\delta} \left(h^{mi} \phi_\alpha \pa_j((\sigma^{T})_{i}^{j}(1-\chi_{v_n}))\right)^{\eps,\delta}d\mu d\xi \\
+\int_{\mathbb{R}^{d+1}}(1-\chi_{v_n}^{\eps,\delta})\chi_{u_n}^{\eps,\delta}G(\mx,\xi)d\mu d\xi.
\label{eq:num3.7}
\end{multline}
An analogous calculation for the second term from \eqref{eq:num2} establishes \eqref{eq:num4}.

\end{document}